\newtheorem{Thm}{Theorem}[section]
\newtheorem{Lemma}[Thm]{Lemma}
\newtheorem{Cor}[Thm]{Corollary}
\newtheorem{Prop}[Thm]{Proposition}
\theoremstyle{definition}
\newtheorem{Def}[Thm]{Definition}
\newtheorem{Example}[Thm]{Example}
\newtheorem{Rmk}[Thm]{Remark}
\def\bbz{\mathbb{Z}}
\def\bbc{\mathbb{C}}
\def\bbr{\mathbb{R}}
\def\bbz{\mathbb{Z}}
\def\bbn{\mathbb{N}}
\def\bbq{\mathbb{Q}}
\def\calR{\mathcal{R}}
\def\frakG{\mathfrak{G}}
\def\frakS{\mathfrak{S}}
\def\lra{\longrightarrow}
\def\x{\times}
\def\bs{\backslash}
\def\aut{\mathrm{Aut}}
\def\aff{\mathrm{Aff}}
\def\fix{\mathrm{Fix}}
\def\gfix{\mathrm{fix}}
\def\Nil{\mathrm{Nil}}
\def\id{\mathrm{id}}
\def\tr{\mathrm{tr}}
\def\ind{\mathrm{ind}}
\def\Endo{\mathrm{Endo}}
\def\R{(\mathrm{R})}
\def\E{(\mathrm{E})}
\def\Ad{\mathrm{Ad}}
\def\sp{\mathrm{sp}}
\def\GL{\mathrm{GL}}
\def\bbs{\mathbb{S}}
\def\Sol{\mathrm{Sol}}
\def\GammaA{\Gamma_{\!A}}
\def\sgn{\mathrm{sign~\!}}
\def\coin{\mathrm{coin}}
\def\sol{\mathfrak{sol}}
\def\boxit#1{\vbox{\hrule\hbox{\vrule\kern3pt
     \vbox{\kern3pt#1\kern3pt}\kern3pt\vrule}\hrule}}
\DeclareMathOperator{\grow}{Growth}
\begin{document}
\title[The Nielsen and Reidemeister numbers on infra-solvmanifolds]
{The Nielsen and Reidemeister numbers of maps on infra-solvmanifolds of type $\R$}
\author{Alexander Fel'shtyn }
\address{Instytut Matematyki, Uniwersytet Szczecinski,
ul. Wielkopolska 15, 70-451 Szczecin, Poland  and}

\address{Institut des Hautes \'Etudes Scientifiques, Le Bois-Marie 35, route de Chartres 91440 Bures-sur-Yvette, France}

\email{felshtyn@ihes.fr, fels@wmf.univ.szczecin.pl}

 \author{Jong Bum Lee}
\address{Department of mathematics, Sogang University, Seoul 121-742, KOREA}
\email{jlee@sogang.ac.kr}

\date{\today}
\keywords{Infra-nilmanifold, infra-solvmanifold of type $\R$, Reidemeister number, Reidemeister zeta function, Nielsen number, Nielsen zeta function}

\thanks{The second-named author is partially supported by Basic Science Researcher Program through the National Research Foundation of Korea funded by the Ministry of Education (No. 2013R1A1A2058693) and by the Sogang University Research Grant of 2010(10022).}
\thanks{Tel: +48-91-444-1271; Fax: +48-91-444-1226 (A. Fel'shtyn)}
\thanks{Tel: +82-2-705-8414; Fax: +82-2-714-6284 (J.B.Lee)}

\subjclass[2000]{37C25, 58F20}

\begin{abstract}
We prove the rationality, the functional equations and calculate the radii of convergence of the Nielsen and the Reidemeister zeta functions of continuous maps on infra-solvmanifolds of type $\R$. We
find a connection between the Reidemeister and Nielsen zeta functions and the Reidemeister torsions of the corresponding  mapping tori. We  show that if  the Reidemeister zeta function is defined for a homeomorphism on an infra-solvmanifold of type $\R$, then this manifold is an infra-nilmanifold.
We also prove that a map on  an infra-solvmanifold of type $\R$ induced by an affine map  minimizes the topological entropy in its  homotopy class and it has a rational Artin-Mazur zeta function.
Finally we prove the Gauss congruences for the Reidemeister and Nielsen numbers of any map on an infra-solvmanifolds of type $\R$ whenever all the Reidemeister numbers of iterates of the map are finite.
Our  main technical tool is the averaging formulas for the Lefschetz, the Nielsen  and the Reidemeister numbers on infra-solvmanifolds of type $\R$.
\end{abstract}
\maketitle

\tableofcontents

\setcounter{section}{-1}

\section{Introduction}

We assume everywhere $X$ to be a connected, compact polyhedron and $f:X\rightarrow X$ to be a continuous map. Let $p:\tilde{X}\rightarrow X$ be the universal cover of $X$ and $\tilde{f}:\tilde{X}\rightarrow \tilde{X}$ a lifting
of $f$, {i.e.,} $p\circ\tilde{f}=f\circ p$. Two lifts $\tilde{f}$ and $\tilde{f}^\prime$ are called \emph{conjugate} if there is a $\gamma\in\Gamma\cong\pi_1(X)$ such that $\tilde{f}^\prime = \gamma\circ\tilde{f}\circ\gamma^{-1}$. The subset $p(\fix(\tilde{f}))\subset \fix(f)$ is called the \emph{fixed point class} of $f$ determined by the lifting class $[\tilde{f}]$. A fixed point class is called \emph{essential} if its index is nonzero. The number of lifting classes of $f$ (and hence the number of fixed point classes, empty or not) is called the \emph{Reidemeister number} of $f$, denoted {by} $R(f)$. This is a positive integer or infinity. The number of essential fixed point classes is called the \emph{Nielsen number} of $f$, denoted by $N(f)$ \cite{Jiang}.

The Nielsen number is always finite. $R(f)$ and $N(f)$ are homotopy invariants. In the category of compact, connected polyhedra the Nielsen number of a map is, apart from in certain exceptional cases, equal to the least number of fixed points of maps with the same homotopy type as $f$.

Let $G$ be a group and $\phi:G\rightarrow G$ an endomorphism. Two elements $\alpha, \alpha^\prime\in G$ are said to be \emph{ $\phi$-conjugate} if and only if there exists $\gamma \in G$ such that $\alpha^\prime=\gamma\alpha\phi(\gamma)^{-1}$.
The number of $\phi$-conjugacy classes is called the \emph{Reidemeister number} of $\phi$, denoted by $R(\phi)$. This is a positive integer or infinity.

Taking a dynamical point of view, we consider the iterates of $f$ and $\phi$, and we may define following \cite{Fel84, PilFel85, Fel88,  Fel91} several zeta functions connected with {the} Nielsen fixed point theory.
The Reidemeister zeta functions of $f$ and $\phi$ and the Nielsen zeta function of $f$ are defined as power series:
\begin{align*}
R_\phi(z)&=\exp\left(\sum_{n=1}^\infty \frac{R(\phi^n)}{n}z^n\right),\\
R_f(z)&=\exp\left(\sum_{n=1}^\infty \frac{R(f^n)}{n}z^n\right),\\
N_f(z)&=\exp\left(\sum_{n=1}^\infty \frac{N(f^n)}{n}z^n\right).
\end{align*}
Whenever we mention the Reidemeister zeta function $R_f(z)$, we shall assume that it is well-defined and so $R(f^n)<\infty$ and $R(\phi^n)<\infty$ for all $n>0$. Hence $R_f(z)=N_f(z)$ on infra-nilmanifolds by  Theorem~\ref{AV-all} below and on infra-solvmanifolds of type $\R$ by Corollary~\ref{R-fix}. However, there are  spaces and maps for which  $R_f(z)$ is not defined. The zeta functions $R_f(z)$ and $N_f(z)$ are homotopy invariants. {The function $N_f(z)$ has a positive radius of convergence for any continuous map $f$ \cite{PilFel85}.} The above zeta functions are directly analogous to the Lefschetz zeta function
$$
L_f(z) := \exp\left(\sum_{n=1}^\infty \frac{L(f^n)}{n} z^n \right),
$$
where
\begin{equation*}\label{Lef}
 L(f^n) := \sum_{k=0}^{\dim X} (-1)^k \tr\Big[f_{*k}^n:H_k(X;\bbq)\to H_k(X;\bbq)\Big]
\end{equation*}
is the Lefschetz number of the iterate $f^n$ of $f$. The Lefschetz zeta function is a rational function of $z$ and is given by the formula:
$$
L_f(z) = \prod_{k=0}^{\dim X}
          \det\big({I}-f_{*k}.z\big)^{(-1)^{k+1}}.
$$

The following problem was investigated: for which spaces and maps and for which groups and endomorphisms are the Nielsen and Reidemeister zeta functions rational functions? Are these functions algebraic functions?

The knowledge that a zeta function is a rational function is important because it shows that the infinite sequence of coefficients of the corresponding power series is closely interconnected, and is given by the finite set of zeros and poles of the zeta function.

In \cite{Fel91, FelHil, fhw, Li94, Fel00}, the rationality of the Reidemeister zeta function $R_\phi(z)$ was proven in the following cases: the group is finitely generated and an endomorphism is eventually commutative; the group is finite; the group is a direct sum of a finite group and a finitely generated free Abelian group; the group is finitely generated, nilpotent and torsion free.
In \cite[Theorem 4]{Wong01} the rationality of the Reidemeister and Nielsen zeta functions was proven for infra-nilmanifold under some (rather technical) sufficient conditions.
It is also known that the Reidemeister numbers of the iterates of an automorphism  of an {almost polycyclic group} satisfy remarkable Gauss congruences \cite{crelle, ft}.

In this paper we investigate the Reidemeister and the Nielsen zeta functions on infra-solvmanifolds of type $\R$. Our  main technical tool is the averaging formulas for the Lefschetz numbers, the Nielsen numbers and the Reidemeister numbers on infra-nilmanifolds and on infra-solvmanifolds of type $\R$.

 Recently, using these averaging formulas, K. Dekimpe and G.-J. Dugardein \cite{DeDu,Du} calculated the  Nielsen numbers  via  Lefschetz numbers and proved the rationality of the Nielsen zeta functions on infra-nilmanifolds.

   We prove in this paper the rationality, the functional equations and calculate the radii of convergence of the Nielsen and the Reidemeister zeta functions of continuous maps on infra-solvmanifolds of type $\R$. We
find a connection between the Reidemeister and Nielsen zeta functions and the Reidemeister torsions of the corresponding  mapping tori. We  show that if  the Reidemeister zeta function is defined for a homeomorphism on an infra-solvmanifold of type $\R$, then this manifold is an infra-nilmanifold.
We also prove that a map on  an infra-solvmanifold of type $\R$ induced by an affine map  minimizes the topological entropy in its  homotopy class and it has a rational Artin-Mazur zeta function. Finally we prove the Gauss congruences for the Reidemeister and Nielsen numbers of any map on an infra-solvmanifolds of type $\R$ whenever all the Reidemeister numbers of iterates of the map are finite.

Let us present the contents of the paper in more details.
  In Section~\ref{DeDu} we describe the averaging formulas for the Lefschetz numbers, the Nielsen numbers and the Reidemeister numbers on infra-nilmanifolds and Dekimpe-Dugardein's formula for the Nielsen numbers.
In Section~\ref{Coin-S}, we obtain a partial generalization   of K. Dekimpe and G.-J. Dugardein's formula   from fixed points on infra-nilmanifolds to coincidences on infra-solvmanifolds of type $\R$ when the holonomy group is a cyclic group.
The rationality and the functional equations for the Reidemeister and the Nielsen zeta functions on infra-solvmanifolds of type $\R$ are proven in Sections~\ref{Rationality} and ~\ref{jiang}.
After studying the asymptotic Nielsen numbers on infra-solvmanifolds of type $\R$ in Section~\ref{Asymptotic}, we discuss the relationship between the topological entropies, the asymptotic Nielsen numbers and the radius of convergence of the Nielsen and the Reidemeister zeta functions in Section~\ref{EC}.
We also prove in Section~\ref{EC} that a map on  an infra-solvmanifold of type $\R$ induced by the affine map  minimizes the topological entropy in its  homotopy class .
In Section~\ref{Tor}, we find a connection between the Nielsen and the Reidemeister zeta functions and the Reidemeister torsions of the corresponding mapping tori. In Section~\ref{jiang}, we obtain the averaging formula for the Reidemeister numbers on infra-solvmanifolds of type $\R$ and we are able to show that the Reidemeister zeta functions on infra-solvmanifolds of type $\R$ coincide with the Nielsen zeta functions. In Section~\ref{No R}, we show
that if  the Reidemeister zeta function is defined for a homeomorphism on an infra-solvmanifold of type $\R$, then this  manifold is an infra-nilmanifold.
In Section~\ref{AM} we prove that the Artin- Mazur zeta function coincides with the Nielsen zeta function and is  a rational function with  functional equation  for  a continuous map on an infra-solvmanifold of type $\R$ induced by an affine map. In Section~\ref{Gauss cong} we prove the Gauss congruences for the Reidemeister and Nielsen numbers of any map on an infra-solvmanifolds of type $\R$ whenever all the Reidemeister numbers of iterates of the map are finite.

\smallskip

\noindent
\textbf{Acknowledgments.}
The first author is indebted to the Institut des Hautes \'Etudes Scientifiques
(Bures-sur-Yvette) for the support and hospitality and the
possibility of the present research during his visit there.
The authors are grateful to Karel  Dekimpe and Gert-Jan Dugardein for helpful comments and
valuable discussions. {The authors would like to thank the referee for making careful corrections to a few expressions and suggesting some relevant references in the original version of the article. This helped improving some results.}

\section{Averaging formulas and Dekimpe-Dugardein's formula}\label{DeDu}

We consider almost Bieberbach groups $\Pi\subset G\rtimes \aut(G)$, where $G$ is a connected, simply connected nilpotent Lie group, and infra-nilmanifolds $M=\Pi\bs{G}$. {It is known that these are exactly the class of almost flat Riemannian manifolds \cite{Ruh}.} It is L. Auslander's result (see, for example, \cite{LR}) that $\Gamma:=\Pi\cap G$ is a lattice of $G$, and is the unique maximal normal nilpotent subgroup of $\Pi$. The group $\Phi=\Pi/\Gamma$ is the \emph{holonomy group} of $\Pi$ or $M$. Thus we have the following commutative diagram:
$$
\CD
1@>>>G@>>>G\rtimes\aut(G)@>>>\aut(G)@>>>1\\
@.@AAA@AAA@AAA\\
1@>>>\Gamma@>>>\Pi@>{p}>>\Phi@>>>1
\endCD
$$
Thus $\Phi$ sits naturally in $\aut(G)$. Denote $\rho:\Phi\to\aut(\frakG)$, $A\mapsto A_*= \text{the differential of $A$}$.

Let $M=\Pi\bs{G}$ be an infra-nilmanifold. Any continuous map $f:M\to M$ induces a homomorphism $\phi:\Pi\to\Pi$. Due to \cite[Theorem~1.1]{KBL95}, we can choose an affine element $(d,D)\in G\rtimes \Endo(G)$ such that
\begin{equation}\label{KBL}
\phi(\alpha)\circ (d,D)=(d,D)\circ\alpha,\quad \forall\alpha\in\Pi.
\end{equation}
This implies that the affine map $(d,D):G\to G$ induces a continuous map on the infra-nilmanifold $M=\Pi\bs{G}$, which is homotopic to $f$. That is, $f$ has an affine homotopy lift $(d,D)$.

By \cite[Lemma~3.1]{LL-JGP}, we can choose a fully invariant subgroup $\Lambda\subset\Gamma$ of $\Pi$ which is of finite index. Therefore $\phi(\Lambda)\subset\Lambda$ and so $\phi$ induces the following commutative diagram
\begin{equation*}
\CD
1@>>>\Lambda@>>>\Pi@>>>\Psi@>>>1\\
@.@VV{\phi'}V@VV{\phi}V@VV{\bar\phi}V\\
1@>>>\Lambda@>>>\Pi@>>>\Psi@>>>1
\endCD
\end{equation*}
where $\Psi=\Pi/\Lambda$ is finite. Applying \eqref{KBL} for $\lambda\in\Lambda\subset\Pi$, we see that
$$
\phi(\lambda)=dD(\lambda)d^{-1}=(\tau_d D)(\lambda)
$$
where $\tau_d$ is the conjugation by $d$. The homomorphism $\phi':\Lambda\to\Lambda$ induces a unique Lie group homomorphism $F=\tau_dD:G\to G$, and hence a Lie algebra homomorphism $F_*:\frakG\to\frakG$. On the other hand, since $\phi(\Lambda)\subset\Lambda$, $f$ has a lift $\bar{f}:N\to N$ on the nilmanifold $N:=\Lambda\bs{G}$ which finitely and regularly covers $M$ and has $\Psi$ as its group of covering transformations.

\begin{Thm}[Averaging Formula {\cite[Theorem~3.4]{LL-JGP}, \cite[Theorem~6.11]{HLP}}]\label{AV-all}
Let $f$ be a continuous map on an infra-nilmanifold $\Pi\bs{G}$ with holonomy group $\Phi$. Let $f$ have an affine homotopy lift $(d,D)$ and let $\phi:\Pi\to\Pi$ be the homomorphism induced by $f$. Then we have
\begin{align*}
L(f)&=\frac{1}{|\Phi|}\sum_{A\in\Phi}\det(I-A_*F_*)=\frac{1}{|\Phi|}\sum_{A\in\Phi}\det(I-A_*D_*),\\
N(f)&=\frac{1}{|\Phi|}\sum_{A\in\Phi}|\det(I-A_*F_*)|=\frac{1}{|\Phi|}\sum_{A\in\Phi}|\det(I-A_*D_*)|,\\
R(f)=R(\phi)&=\frac{1}{|\Phi|}\sum_{A\in\Phi}\sigma\left(\det(A_*-F_*)\right)=\frac{1}{|\Phi|}\sum_{A\in\Phi}\sigma\left(\det(A_*-D_*)\right)
\end{align*}
where $\sigma:\bbr\to\bbr\cup\{\infty\}$ is defined by $\sigma(0)=\infty$ and $\sigma(x)=|x|$ for all $x\ne0$.
\end{Thm}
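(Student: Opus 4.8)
The plan is to push the three quantities down to the nilmanifold cover $N=\Lambda\bs G$, where the Lefschetz number is computed by Nomizu's theorem and the Nielsen number by Anosov's theorem, and then to reassemble them by averaging over the finite deck group $\Psi=\Pi/\Lambda$. Since $L$, $N$, $R$ are homotopy invariants and $N$ is aspherical, we may replace $f$ by the map induced by $(d,D)$ and $\bar f$ by the map induced by $(d,D)$ on $N$. For $\bar\alpha\in\Psi$ choose a representative $\alpha=(a,A)\in\Pi$, so that $A\in\Phi$; then $\bar\alpha\bar f\colon N\to N$ is induced by the affine map $\alpha\circ(d,D)=(a\cdot A(d),\,AD)$ of $G$, whose linear part on $\frakG$ is $(AD)_{*}=A_{*}D_{*}$. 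By Nomizu's isomorphism $H_{*}(N;\bbq)\cong H_{*}(\frakG)$, under which the affine self-map $\bar\alpha\bar f$ acts through its linear part $A_{*}D_{*}$ (the translation part being irrelevant), the exterior-algebra identity $\sum_{k}(-1)^{k}\tr(\wedge^{k}T)=\det(I-T)$ gives $L(\bar\alpha\bar f)=\det(I-A_{*}D_{*})$; Anosov's theorem gives $N(\bar\alpha\bar f)=|L(\bar\alpha\bar f)|=|\det(I-A_{*}D_{*})|$; and the analogous, and simpler, fact for nilmanifolds --- that the Reidemeister number of a self-map with linear part $B$ is finite and equal to $|\det(I-B_{*})|$ when $\det(I-B_{*})\ne0$, and is infinite otherwise --- gives $R(\bar\alpha\bar f)=\sigma(\det(I-A_{*}D_{*}))$.

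We then invoke the averaging formulas for the covering $q\colon N\to M$. Since $q^{-1}(\fix(f))=\bigsqcup_{\bar\alpha\in\Psi}\fix(\bar\alpha\bar f)$, every point of $q^{-1}(x)$ over a fixed point $x$ of $f$ is a fixed point of exactly one $\bar\alpha\bar f$ and carries the same local index as $x$, and there are $|\Psi|$ such points; summing indices gives $L(f)=\frac1{|\Psi|}\sum_{\bar\alpha\in\Psi}L(\bar\alpha\bar f)$. Reorganizing by fixed point classes, $\Psi$ acts by deck transformations on the disjoint union over $\bar\alpha\in\Psi$ of the fixed point classes of $\bar\alpha\bar f$, carrying the classes of $\bar\alpha\bar f$ onto those of the lift attached to the $\bar\phi$-conjugate $\bar\beta\bar\alpha\bar\phi(\bar\beta)^{-1}$; one shows this action is free, with orbit set the set of (non-empty) fixed point classes of $f$ and with the common index along an orbit equal to the index of the corresponding class of $f$. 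Hence $q$ restricts to a bijection from each upstairs class onto its image, essential classes correspond to essential classes, and $N(f)=\frac1{|\Psi|}\sum_{\bar\alpha\in\Psi}N(\bar\alpha\bar f)$. For Reidemeister numbers, $R(f)=R(\phi)$ is the tautological bijection between lifting classes of $f$ and $\phi$-conjugacy classes in $\Pi$, and the averaged form $R(\phi)=\frac1{|\Psi|}\sum_{\bar\alpha\in\Psi}R(\bar\alpha\bar f)$ is the algebraic shadow of the above --- the $\phi$-conjugacy classes of $\Pi$ lie over the $\bar\phi$-conjugacy classes of $\Psi$, and over a given one there lie either finitely many (then exactly as many as in the corresponding twisted conjugacy set of $\Lambda$) or infinitely many, in accordance with $\sigma(0)=\infty$.

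Next the sums collapse from $\Psi$ to $\Phi$: each of $L(\bar\alpha\bar f)$, $N(\bar\alpha\bar f)$, $R(\bar\alpha\bar f)$ depends only on the image $A\in\Phi$ of $\bar\alpha$ (the remaining deck transformations, those in $\Gamma/\Lambda$, are homotopic to the identity on $N$, since $N$ is aspherical and they act on $\pi_{1}(N)$ by inner automorphisms), and the fibre of $\Psi\to\Phi$ has $|\Psi|/|\Phi|$ elements, so every average over $\Psi$ becomes $\frac1{|\Phi|}\sum_{A\in\Phi}$ of the same value; this yields the $D_{*}$-forms. The $F_{*}$-forms are then immediate, because $F=\tau_{d}D$ also descends to a self-map of $N$ that realizes $\phi'$ on $\pi_{1}(N)=\Lambda$ and so is homotopic to $\bar f$; hence $\bar\alpha\bar f$ may equally be computed from the lift $\alpha\circ F$, whose linear part on $\frakG$ is $A_{*}F_{*}$, giving $\det(I-A_{*}F_{*})=\det(I-A_{*}D_{*})$ for every $A$. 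Finally, for the Reidemeister formula, $\det(A_{*}-D_{*})=\det(A_{*})\det(I-A_{*}^{-1}D_{*})$ with $|\det A_{*}|=1$ (the holonomy acts by finite-order automorphisms of $\frakG$), so $\sigma(\det(A_{*}-D_{*}))=\sigma(\det(I-A_{*}^{-1}D_{*}))$, and re-indexing $A\mapsto A^{-1}$ over $\Phi$ rewrites $\sum_{A}\sigma(\det(I-A_{*}D_{*}))$ as $\sum_{A}\sigma(\det(A_{*}-D_{*}))$, and likewise with $F_{*}$ in place of $D_{*}$.

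The main obstacle is the Nielsen averaging step: proving that $\Psi$ acts freely on the fixed point classes of the lifts, with quotient the fixed point classes of $f$ and with matching indices. This is precisely what forces the \emph{essential} classes of $f$ to be detected by the essential classes of the $\bar\alpha\bar f$ --- so that $N(f)$ is the average of the $|\det(I-A_{*}D_{*})|$ rather than merely bounded by it --- and it is where the structure of almost-Bieberbach groups (and the choice of $\Lambda$ as a fully invariant subgroup of $\Pi$ contained in $\Gamma$) is genuinely used, together with the non-cancellation of indices upstairs furnished by Anosov's theorem. A secondary point requiring care is the bookkeeping for $R=\infty$: one must check that if some $\det(A_{*}-D_{*})$ vanishes then the corresponding lift already has infinitely many $\phi$-conjugacy classes, forcing $R(\phi)=\infty$ consistently with the convention $\sigma(0)=\infty$.
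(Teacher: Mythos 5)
The paper itself contains no proof of Theorem~\ref{AV-all}: it is imported from \cite{LL-JGP} and \cite{HLP}. Your plan --- replace $f$ by the map induced by $(d,D)$, pass to a fully invariant finite-index subgroup $\Lambda\subset\Gamma$, compute $L$, $N$, $R$ of the lifts $\bar\alpha\bar f$ on the nilmanifold $\Lambda\backslash G$ (Nomizu, Anosov, and the nilmanifold Reidemeister computation giving $\det(I-A_*D_*)$, $|\det(I-A_*D_*)|$, $\sigma(\det(I-A_*D_*))$), then average over $\Psi=\Pi/\Lambda$ and collapse to $\Phi$ --- is exactly the strategy of those references, so you are on the intended route. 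The $L$-averaging, the collapse from $\Psi$ to $\Phi$, the passage between the $F_*$- and $D_*$-forms via homotopy invariance, and the re-indexing $A\mapsto A^{-1}$ using $|\det A_*|=1$ are all correct.

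Two caveats. First, your justification of the collapse is false as stated: a deck transformation coming from $\Gamma/\Lambda$ is in general \emph{not} homotopic to the identity of $N$, because conjugation by $\gamma\in\Gamma\setminus\Lambda$ need not be an inner automorphism of $\Lambda$ (already for the integral Heisenberg group and a suitable index-two $\Lambda$ it acts nontrivially on $H_1(\Lambda\backslash G)$). Fortunately the claim you actually need is immediate without this: every representative of a coset of $\Psi$ lying over $A\in\Phi$ has linear part $A$, so the computed values $\det(I-A_*D_*)$ depend only on $A$, and each fibre of $\Psi\to\Phi$ has $[\Gamma:\Lambda]$ elements. Second, and more seriously, the two averaging formulas you invoke, $N(f)=\tfrac1{|\Psi|}\sum_{\bar\alpha}N(\bar\alpha\bar f)$ and $R(\phi)=\tfrac1{|\Psi|}\sum_{\bar\alpha}R(\bar\alpha\bar f)$, are precisely the substantive content of the cited theorems, and in your write-up they are asserted rather than proved. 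For $N$ you name the right ingredients (freeness of the twisted $\Psi$-action on the upstairs classes, which uses torsion-freeness of $\Pi$, and matching of indices), but proving them is where the almost-Bieberbach structure genuinely enters; one ingredient you omit is sign coherence across the different lifts lying over a single class of $f$, which follows because $\hat\phi(B)_*D_*=D_*B_*$ makes $(BA\hat\phi(B)^{-1})_*D_*=B_*(A_*D_*)B_*^{-1}$ conjugate to $A_*D_*$ --- without it, the index identity $|\Psi|\,\ind(f,\mathbb{F})=\sum(\text{upstairs indices over }\mathbb{F})$ only yields $\tfrac1{|\Psi|}\sum_{\bar\alpha}N(\bar\alpha\bar f)=\sum_{\mathbb{F}}|\ind(f,\mathbb{F})|$, not $N(f)$. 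Likewise the Reidemeister averaging over $\Lambda$, including the check that a vanishing $\det(A_*-D_*)$ forces $R(\phi)=\infty$, needs an actual argument (this is \cite[Sec.~6]{HLP}, cf. \cite{HL}), not the brief remark you give. So the proposal is a correct reduction in outline, with the hard covering-space averaging steps --- the content of \cite{KLL} and \cite{HLP} --- left unproved.
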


Recently, Dekimpe and Dugardein in \cite{DeDu} showed the following: Let $f:M\to M$ be a continuous map on an infra-nilmanifold $M$. Then the Nielsen number $N(f)$ is either equal to $|L(f)|$ or equal to the expression $|L(f)-L(f_+)|$, where $f_+$ is a lift of $f$ to a $2$-fold covering of $M$. By exploiting the exact nature of this relationship for all powers of $f$, they proved that the Nielsen zeta function $N_f(z)$ is always a rational function.

Let $M=\Pi\bs{G}$ be an infra-nilmanifold with the holonomy group $\Phi$ and let $f:M\to M$ be a continuous map with an affine homotopy lift $(d,D)$.
Let $A\in\Phi$. Then we can choose $g\in G$ so that $\alpha=(g,A)\in\Pi$. Write $\phi(\alpha)=(g',A')$. By (\ref{KBL}), we have $(g',A')(d,D)=(d,D)(g,A) \Rightarrow A'D=DA$. Thus $\phi$ induces a function $\hat\phi:\Phi\to\Phi$ given by $\hat\phi(A)=A'$ so that it satisfies that
\begin{equation}\label{Dekimpe-eq}
\hat\phi(A)D=DA, \quad \hat\phi(A)_*D_*=D_*A_*
\end{equation}
for all $A\in\Phi$.

In what follows, we shall give a brief description of main results in \cite{DeDu}. We can choose a linear basis of $\frakG$ so that $\rho(\Phi)=\Phi_*\subset\aut(\frakG)$ can be expressed as diagonal block matrices
$$
\left[\begin{matrix}\Phi_1&0\\0&\Phi_2\end{matrix}\right]\subset\GL(n_1,\bbr)\x\GL(n_2,\bbr)
\subset\GL(n,\bbr)
$$
and $D_*$ can be written in block triangular form
$$
\left[\begin{matrix}D_1&*\\0&D_2\end{matrix}\right]
$$
where $D_1$ and $D_2$ have eigenvalues of modulus $\le1$ and $>1$, respectively. We can assume $\Phi=\Phi_1\x\Phi_2$. Every element $\alpha\in\Pi$ is of the form $(a,A)\in G\rtimes\aut(G)$ and $\alpha$ is mapped to $A=(A_1,A_2)$. We define
$$
\Pi_+=\{\alpha\in\Pi\mid \det A_2=1\}.
$$
Then $\Pi_+$ is a subgroup of $\Pi$ of index at most $2$. If $[\Pi:\Pi_+]=2$, then $\Pi_+$ is also an almost Bieberbach group and the corresponding infra-nilmanifold $M_+=\Pi_+\bs{G}$ is a double covering of $M=\Pi\bs{G}$; the map $f$ lifts to a map $f_+:M_+\to M_+$ which has the same affine homotopy lift $(d,D)$ as $f$. If $D_*$ has no eigenvalues of modulus $>1$, then for any $A\in\Phi$, $A=A_1$ and in this case we take $\Pi_+=\Pi$. Now, a main result, Theorem~4.4, of \cite{DeDu} is the following:

\begin{Thm}[{\cite{DeDu},Theorem~4.4, when $\Pi=\Pi_+$ see also proof of Theorem \ref{infra}}]
\label{T4.4}

Let $f$ be a continuous map on an infra-nilmanifold $\Pi\bs{G}$ with an affine homotopy lift $(d,D)$.
Then the Nielsen  numbers  of $f^k$ are
$$
N(f^k)= \begin{cases}
(-1)^{p+(k+1)n}L(f^k),&\text{when $\Pi=\Pi_+$;}\\
(-1)^{p+(k+1)n}\left(L(f_+^k)-L(f^k)\right),&\text{when $\Pi\ne\Pi_+$},
\end{cases}
$$
where $p$ be the number of real eigenvalues of $D_*$ which are $>1$ and $n$ be the number of real eigenvalues of $D_*$ which are $<-1$.
\end{Thm}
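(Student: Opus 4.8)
The plan is to read off all the Nielsen numbers $N(f^k)$ from the averaging formula (Theorem~\ref{AV-all}) and then to check, block by block of $D_*$, that the absolute values appearing there can be stripped at the cost of one global sign. First I would observe that if $(d,D)$ is an affine homotopy lift of $f$, then $(d,D)^k=\bigl(d\,D(d)\cdots D^{k-1}(d),\,D^k\bigr)$ is an affine homotopy lift of $f^k$, with linear part $D_*^k$ on $\frakG$; since the holonomy group of $\Pi$ does not depend on the map, Theorem~\ref{AV-all} applied to $f^k$ yields
\begin{align*}
L(f^k)&=\frac{1}{|\Phi|}\sum_{A\in\Phi}\det(I-A_*D_*^k),\\
N(f^k)&=\frac{1}{|\Phi|}\sum_{A\in\Phi}\bigl|\det(I-A_*D_*^k)\bigr|.
\end{align*}
Writing $\varepsilon(A):=\det A_2\in\{\pm1\}$ — where $\rho(A)=A_*$ is written block-diagonally as in the splitting below — it therefore suffices to prove the termwise identity $\bigl|\det(I-A_*D_*^k)\bigr|=(-1)^{p+(k+1)n}\,\varepsilon(A)\,\det(I-A_*D_*^k)$ for all $A\in\Phi$ and $k\ge1$; the terms with $\det(I-A_*D_*^k)=0$ contribute nothing to $N(f^k)$, $L(f^k)$ or $L(f_+^k)$ and can be set aside.

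Next I would pass to the basis of $\frakG$ adapted to the hyperbolic-type splitting used in \cite{DeDu}, so that $\Phi_*=\Phi_1\x\Phi_2$ is block diagonal and $D_*$ is block upper triangular with diagonal blocks $D_1$ (all eigenvalues of modulus $\le1$) and $D_2$ (all eigenvalues of modulus $>1$). Then $A_*D_*^k$ is block upper triangular with diagonal blocks $A_1D_1^k$ and $A_2D_2^k$, so $\det(I-A_*D_*^k)=\det(I-A_1D_1^k)\cdot\det(I-A_2D_2^k)$. Fixing a $\Phi$-invariant inner product on $\frakG$ (making every $A_*$ orthogonal) and using the intertwining relations $\hat\phi(A)_*D_*=D_*A_*$ of \eqref{Dekimpe-eq} — which, by the block-triangular shape of $D_*$, refine to $\hat\phi(A)_iD_i=D_iA_i$ for $i=1,2$ — the heart of the matter is to show that (i) $A_1D_1^k$ has no real eigenvalue $>1$, hence $\det(I-A_1D_1^k)\ge0$, and (ii) $A_2D_2^k$ has no real eigenvalue of modulus $\le1$, hence the polynomial $t\mapsto\det(tI-A_2D_2^k)$ has constant sign on $[-1,1]$, so that $\det(I-A_2D_2^k)$ has the sign of its value at $t=0$, namely $\det(-A_2D_2^k)=(-1)^{n_2}\det A_2\,(\det D_2)^k$. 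Granting (i) and (ii), and using that $D_2$ has exactly $p$ real eigenvalues $>1$, exactly $n$ real eigenvalues $<-1$, and all remaining ones in complex-conjugate pairs — so $n_2\equiv n+p\pmod2$ and $\det D_2$ has sign $(-1)^n$ — the sign of $\det(I-A_*D_*^k)$ works out to $(-1)^{n+p}\varepsilon(A)(-1)^{nk}=(-1)^{p+(k+1)n}\varepsilon(A)$, which is exactly the termwise identity above.

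It then remains to assemble the two cases. If $\Pi=\Pi_+$, every $A\in\Phi$ has $\varepsilon(A)=1$, and summing the termwise identity over $A$ gives $N(f^k)=(-1)^{p+(k+1)n}L(f^k)$. If $[\Pi:\Pi_+]=2$, then $M_+=\Pi_+\bs G$ is a double cover of $M$ on which $f$ lifts to $f_+$ with the same affine homotopy lift $(d,D)$ and holonomy group $\Phi_+=\Pi_+/\Gamma$ of index $2$ in $\Phi$; applying Theorem~\ref{AV-all} to $f_+^k$ and using $|\Phi|=2|\Phi_+|$ together with $\Phi_+=\{A\in\Phi:\det A_2=1\}$ gives
\begin{align*}
L(f_+^k)-L(f^k)&=\frac{1}{|\Phi|}\sum_{A\in\Phi}\varepsilon(A)\det(I-A_*D_*^k)\\
&=(-1)^{p+(k+1)n}\,\frac{1}{|\Phi|}\sum_{A\in\Phi}\bigl|\det(I-A_*D_*^k)\bigr|=(-1)^{p+(k+1)n}N(f^k),
\end{align*}
and multiplying by $(-1)^{p+(k+1)n}$ yields the second formula.

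The hard part will be (i) and (ii). The bounds $|\lambda|\le1$ on $D_1$ and $|\lambda|>1$ on $D_2$ do not suffice by themselves: $D_*$ need not be normal, and a non-normal matrix with all eigenvalues of modulus $>1$ can still contract some vector (dually for $D_1$), so a priori $A_2D_2^k$ could acquire a real eigenvalue in $[-1,1]$. What saves the argument is precisely the finiteness of the holonomy group $\Phi$: via the relations $\hat\phi(A)_iD_i=D_iA_i$ it bounds how far $D_*$ can lie from a block-diagonal (normal) matrix, and this is what forces $A_1D_1^k$ to be spectrally contracting and $A_2D_2^k$ spectrally expanding uniformly in $k$. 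Turning this qualitative statement into a clean proof of (i) and (ii) — equivalently, a uniform-in-$k$ control of the eigenvalues of $A_*D_*^k$ — is the technical core of \cite{DeDu}; once it is in hand, the reductions above and the final assembly are routine manipulations of the averaging formula.
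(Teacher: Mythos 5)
Your reduction via Theorem~\ref{AV-all} and the final assembly are sound: the termwise identity $|\det(I-A_*D_*^k)|=(-1)^{p+(k+1)n}\varepsilon(A)\det(I-A_*D_*^k)$, the sign count $\det(-A_2D_2^k)=(-1)^{n_2}\det A_2(\det D_2)^k$ with $n_2\equiv p+n$ and $\sgn\det D_2=(-1)^n$, and the identification $L(f_+^k)-L(f^k)=\frac{1}{|\Phi|}\sum_{A\in\Phi}\varepsilon(A)\det(I-A_*D_*^k)$ are all correct and are essentially the argument of \cite{DeDu}. The genuine gap is exactly where you place it: claims (i) and (ii) are asserted but not proved, and without them the whole sign analysis is unsupported, since (as you note) eigenvalue bounds on $D_1$ and $D_2$ alone say nothing about the spectrum of $A_iD_i^k$. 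A complete blind proof must close this, and it can be closed in a few lines rather than deferred: iterating \eqref{Dekimpe-eq} gives $D_*^kA_*=\hat\phi^k(A)_*D_*^k$, hence blockwise $(A_iD_i^k)^m=\bigl(A\hat\phi^k(A)\cdots\hat\phi^{(m-1)k}(A)\bigr)_iD_i^{mk}$, where the left factor ranges over the finite set $\rho(\Phi)$ restricted to block $i$ and so has uniformly bounded norm; taking $m$-th roots yields $\sp(A_1D_1^k)\le\sp(D_1)^k\le1$ and $\sp\bigl((A_2D_2^k)^{-1}\bigr)\le\sp(D_2^{-1})^k<1$, which are precisely (i) and (ii). This is the same finite-holonomy trick as \cite[Lemma~3.1]{DRM}, reproduced in Section~\ref{Asymptotic} of this paper, so the missing ingredient is available but you did not supply it.

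For comparison: the paper itself does not reprove Theorem~\ref{T4.4}; it imports it from \cite[Theorem~4.4]{DeDu} and only observes (Remark after the statement) that the case $\Pi=\Pi_+$ also follows from the proof of Theorem~\ref{infra}, which takes a different route — starting from $N(f)=|L(f)|$ and invoking \cite[Theorem~8.2.2]{P} and \cite[Lemma~8.2.1]{P} to propagate the sign relation to all iterates, rather than analyzing the spectrum of $A_*D_*^k$ directly. So your outline follows the original Dekimpe--Dugardein proof more closely than the paper does; it just stops short of its technical core.
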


\begin{Rmk}
1) In \cite[Theorem~4.4]{DeDu}   Nielsen numbers  $N(f^n)$ are expressed in terms of Lefschetz numbers $L(f^n)$ and $L(f_+^n)$ via a table given by  parity of  $n$.  \\
2) The proof of our Theorem~\ref{infra} covers the case when $\Pi=\Pi_+$ in Theorem \ref{T4.4} above  because in this case  $N(f)=|L(f)|$.
\end{Rmk}

\section{Coincidences on infra-solvmanifolds of type $\R$ with a cyclic holonomy group}
\label{Coin-S}

In this section, we will be concerned with a generalization of Theorem~\ref{T4.4} when $k=1$ (that is, $N(f)=|L(f)|$ or $|L(f_+)-L(f)|$) from fixed points on infra-nilmanifolds to coincidences on infra-solvmanifolds of type $\R$. We obtain a partial result for coincidences on infra-solvmanifolds of type $\R$ when the holonomy group is a cyclic group.

Let $S$ be a connected, simply connected solvable Lie group of type $\R$, and let $C$ be a compact subgroup of $\aut(S)$. Let $\Pi\subset S\rtimes C$ be torsion free and discrete which is a finite extension of the lattice $\Gamma=\Pi\cap S$ of $S$. Such a group $\Pi$ is called an SB-\emph{group} modeled on $S$. The quotient space $\Pi\bs{S}$ is called an \emph{infra-solvmanifold} of type $\R$ with holonomy group $\Phi=\Pi/\Gamma$. When $\Pi\subset S$, $\Pi\bs{S}$ is a \emph{special solvmanifold} of type $\R$. Thus the infra-solvmanifold $\Pi\bs{S}$ is finitely and regularly covered by the special solvmanifold $\Gamma\bs{S}$ with the group of covering transformations $\Phi$. For more details, we refer to \cite{LL-Nagoya}.

Let $M=\Pi\bs{S}$ be an infra-solvmanifold of type $\R$ with the holonomy group $\Phi$. Then $\Phi$ sits naturally in $\aut(S)$. Write $\rho:\Phi\to\aut(\frakS)$, $A\mapsto A_*$. Let $f,g:M\to M$ be maps with affine homotopy lifts $(d,D), (e,E):S\to S$, respectively. Then $f$ and $g$ induce homomorphisms $\phi,\psi:\Pi\to\Pi$ by the following rules:
\begin{align*}
\phi(\alpha)\circ(d,D)=(d,D)\circ\alpha,\ \
\psi(\alpha)\circ(e,E)=(e,E)\circ\alpha\ \ \forall\alpha\in\Pi.
\end{align*}
In turn, we obtain functions $\hat\phi, \hat\psi:\Phi\to\Phi$ satisfying
\begin{align*}
\hat\phi(A)D=DA\ \text{ and }\ \hat\psi(A)E=EA\ \ \forall A\in\Phi.
\end{align*}
Thus
\begin{align}\label{*}
\hat\phi(A)_*D_*=D_*A_*\ \text{ and }\ \hat\psi(A)_*E_*=E_*A_*\ \ \forall A\in\Phi.
\end{align}

\bigskip

Recall the following well-known facts from representation theory:

\begin{Thm}[{H. Maschke}]\label{Maschke}
Let $\rho:\Phi\to \GL(n,\bbr)$ be a representation. Then there exist irreducible representations $\rho_i:\Phi\to\GL(n_i,\bbr)$ such that $\rho$ is similar to $\rho_1\oplus\cdots\oplus\rho_s$.
\end{Thm}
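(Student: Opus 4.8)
The plan is to prove this complete-reducibility statement by the classical averaging argument, using crucially that a holonomy group $\Phi$ is \emph{finite}, so that $|\Phi|$ is invertible in $\bbr$. First I would reduce the theorem to the claim that every $\Phi$-invariant subspace $W\subseteq\bbr^n$ admits a $\Phi$-invariant complement. Granting this, the theorem follows by induction on $n$: if $\rho$ is already irreducible we take $s=1$; otherwise choose a proper nonzero invariant subspace $W$, write $\bbr^n=W\oplus W'$ with $W'$ invariant, and apply the inductive hypothesis to the restrictions $\rho|_W$ and $\rho|_{W'}$. Concatenating the resulting bases of the irreducible invariant summands gives a single basis of $\bbr^n$ with respect to which $\rho(A)=\rho_1(A)\oplus\cdots\oplus\rho_s(A)$ for every $A\in\Phi$; the corresponding change-of-basis matrix realizes the asserted similarity.

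To produce the invariant complement I would average an inner product. Starting from any inner product $\langle\cdot,\cdot\rangle_0$ on $\bbr^n$, set
\[
\langle u,v\rangle:=\frac{1}{|\Phi|}\sum_{A\in\Phi}\langle \rho(A)u,\rho(A)v\rangle_0 .
\]
This is again a symmetric positive definite form, and it is $\Phi$-invariant, i.e. $\langle\rho(B)u,\rho(B)v\rangle=\langle u,v\rangle$ for all $B\in\Phi$, since $A\mapsto AB$ permutes $\Phi$. Now if $W$ is $\Phi$-invariant, let $W^{\perp}$ be its orthogonal complement with respect to $\langle\cdot,\cdot\rangle$. For $w\in W^{\perp}$, $w'\in W$ and $A\in\Phi$ one has $\rho(A)^{-1}w'=\rho(A^{-1})w'\in W$, hence $\langle\rho(A)w,w'\rangle=\langle w,\rho(A)^{-1}w'\rangle=0$; thus $\rho(A)w\in W^{\perp}$, so $W^{\perp}$ is $\Phi$-invariant and $\bbr^n=W\oplus W^{\perp}$.

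Equivalently — and this is the formulation closest to how averaging is used elsewhere in the paper — one may take any linear projection $p:\bbr^n\to W$ and replace it by
\[
\bar p:=\frac{1}{|\Phi|}\sum_{A\in\Phi}\rho(A)\,p\,\rho(A)^{-1},
\]
which is a $\Phi$-equivariant idempotent with image $W$; then $\ker\bar p$ is the desired invariant complement.

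I do not anticipate a genuine obstacle: the only hypothesis that is actually used is the finiteness of $\Phi$, which is automatic for holonomy groups of infra-solvmanifolds. The one point to state carefully is the passage from ``invariant subspace has invariant complement'' to the block-diagonal normal form, but this is just the bookkeeping of the inductive step described above. (If one preferred to avoid induction, one could instead invoke that $\bbr[\Phi]$ is a semisimple ring by the same averaging argument and decompose $\bbr^n$ as a semisimple $\bbr[\Phi]$-module; I would keep the elementary inductive version since it is self-contained.)
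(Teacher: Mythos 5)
Your proof is correct: it is the classical averaging argument for Maschke's theorem (invariant inner product, or equivalently an averaged projection, giving invariant complements, then induction), and you rightly identify finiteness of $\Phi$ as the only hypothesis needed, which holds here since $\Phi$ is the finite holonomy group. The paper itself offers no proof to compare with — it simply recalls this statement as a well-known fact from representation theory — so your self-contained argument fills that in along exactly the standard lines one would expect.
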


\begin{Thm}\label{Schur}
Let $\Phi=\langle A\rangle$ be a {cyclic group} of order $n$ and let $\rho:\Phi\to\GL(m,\bbr)$ be a faithful $\bbr$-irreducible representation. If $n=1$ then $\rho$ is the trivial representation $\rho_{\mathrm{triv}}$. If $n=2$, then $m=1$ and $\rho(A)=-1$. In this case, we denote $\rho$ by $\tau$. If $n>2$, then there exists $k\in\bbz$ such that $\gcd(n,k)=1$ and $\rho$ is similar to the irreducible rotation given by
$$
\Phi\lra\GL(2,\bbr),\ A\longmapsto
\left[\begin{matrix}\cos\frac{2k\pi}{n}&-\sin\frac{2k\pi}{n}\\
\sin\frac{2k\pi}{n}&\hspace{8pt}\cos\frac{2k\pi}{n}\end{matrix}\right].
$$
\end{Thm}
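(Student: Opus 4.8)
The plan is to reduce the statement to the eigenvalue structure of the single matrix $T:=\rho(A)$. By faithfulness $T$ has order exactly $n$ in $\GL(m,\bbr)$, and since $T^{n}=I$ its minimal polynomial divides $x^{n}-1$, so $T$ is diagonalizable over $\bbc$ with all eigenvalues among the $n$-th roots of unity; as $T$ is real, this eigenvalue set is stable under complex conjugation. The small cases are then immediate: for $n=1$ the group is trivial and its only irreducible real representation is $\rho_{\mathrm{triv}}$; for $n=2$ we have $T^{2}=I$ with $T\ne I$, so $T$ is diagonalizable over $\bbr$ with eigenvalues in $\{\pm 1\}$, each eigenspace is a $\Phi$-invariant real subspace, $\bbr$-irreducibility forces $m=1$, and faithfulness forces $\rho(A)=-1$.

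For $n\ge 3$ I would work with $V=\bbr^{m}$ and its complexification $V_{\bbc}$. First, $T$ has no real eigenvalue: an eigenspace for $\lambda\in\{\pm 1\}$ would be a nonzero proper $\Phi$-invariant real subspace (proper because $T$, having order $n\ge 3$, is not a scalar), contradicting irreducibility. Hence the eigenvalues of $T$ are non-real $n$-th roots of unity, occurring in conjugate pairs. Fixing one eigenvalue $\zeta^{k}$ with $\zeta=e^{2\pi i/n}$, the sum of the $\zeta^{k}$- and $\zeta^{-k}$-eigenspaces of $T$ in $V_{\bbc}$ is stable under complex conjugation, hence is the complexification of a nonzero $\Phi$-invariant real subspace of $V$, which by irreducibility is all of $V$; therefore $m=2$ and $T$ has only the eigenvalues $\zeta^{\pm k}$. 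The order of $T$ is then $n/\gcd(n,k)$, which must equal $n$, so $\gcd(n,k)=1$. Writing a nonzero eigenvector $v=u_{1}+iu_{2}$ with $u_{1},u_{2}\in V$, the relation $Tv=\zeta^{k}v$ shows that $T$ acts on the pair $u_{1},u_{2}$ — which is $\bbr$-linearly independent, hence a basis of $V$ — as a rotation by $2k\pi/n$; after possibly replacing $k$ by $n-k$ (still coprime to $n$) and reordering the basis, this is the asserted rotation matrix, and conversely such a rotation is $\bbr$-irreducible for $n\ge 3$ since its angle is neither $0$ nor $\pi$.

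This is a classical fact, so there is no genuine obstacle; the single step deserving care is the descent claim that a conjugation-stable complex subspace of $V_{\bbc}$ is the complexification of a real $\Phi$-submodule of $V$. One can avoid it altogether by instead decomposing the commutative group algebra $\bbr[\Phi]\cong\bbr[x]/(x^{n}-1)$ via the real factorizations of the cyclotomic polynomials $\Phi_{d}$, $d\mid n$ — each contributing copies of $\bbr$ when $d\le 2$ and of $\bbc$ when $d\ge 3$ — and observing that the unique faithful simple summand is $\rho_{\mathrm{triv}}$ when $n=1$, the sign representation $\tau$ when $n=2$, and a two-dimensional rotation module attached to the factor $\Phi_{n}$ when $n\ge 3$.
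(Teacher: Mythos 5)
The paper itself offers no proof of this statement: it is recalled (alongside Maschke's theorem) as a well-known fact from representation theory, so there is no internal argument to compare yours against, and your proof should be judged on its own. It is the standard argument and is essentially correct, but one inference needs repair. From the fact that the sum $E_{\zeta^{k}}\oplus E_{\zeta^{-k}}$ of the two conjugate eigenspaces of $T=\rho(A)$ is conjugation-stable, descends to a real $\Phi$-invariant subspace, and hence equals $V$, you conclude ``therefore $m=2$''. That does not follow as written: a priori the eigenvalues $\zeta^{\pm k}$ could occur with multiplicity $d>1$, giving $m=2d$, and nothing said up to that point excludes this. The fix is to run your own descent argument one level lower: take a single eigenvector $v\in E_{\zeta^{k}}$ and consider $\mathrm{span}_{\bbc}\{v,\bar v\}$, which is conjugation-stable and $T$-invariant, hence the complexification of a nonzero real $\Phi$-invariant subspace of real dimension at most $2$; irreducibility forces this subspace to be all of $V$, so $m\le2$, and since $T$ has no real eigenvalue, $m=2$ and the eigenvalues $\zeta^{\pm k}$ are simple. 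With that correction the rest of your first route is fine: the order of $T$ equals $n/\gcd(n,k)$ and faithfulness gives $\gcd(n,k)=1$, the computation in the real basis $u_{1},u_{2}$ produces a rotation by $\pm 2k\pi/n$, and replacing $k$ by $n-k$ handles the sign; the cases $n=1,2$ are also handled correctly.

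Your alternative route through $\bbr[\Phi]\cong\bbr[x]/(x^{n}-1)$ and the real factorization of the cyclotomic polynomials is a correct and arguably cleaner way to reach the same conclusion, and it avoids the multiplicity issue entirely, since each simple summand is visibly of real dimension $1$ or $2$. One small caveat: for $n>2$ the faithful simple summand is not unique in general --- there are $\varphi(n)/2$ of them, one for each irreducible real quadratic factor of the $n$-th cyclotomic polynomial, corresponding to the values of $k$ coprime to $n$ taken up to $k\mapsto n-k$ --- but since the theorem only asserts the existence of some such $k$, this wording slip does not affect the conclusion.
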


\bigskip

Consider the case where the infra-solvmanifold $M$ of type $\R$ is {orientable} ({for coincidences}) with holonomy group $\Phi$ a {cyclic group} with a generator $A_0$. By Theorem~\ref{Maschke}, the natural representation $\rho:\Phi\to\aut(S)\cong\aut(\frakS)$ is similar to a sum of irreducible representations. If $\sigma:\Phi\to\GL(m,\bbr)$ is irreducible, then the induced representation $\bar\sigma:\Phi/\ker\rho\to\GL(m,\bbr)$ is faithful and irreducible. By Theorem~\ref{Schur}, $\bar\sigma$ is similar to $\rho_{\mathrm{triv}}$, $\tau$ or a rotation. Thus we may assume that $\rho=m\rho_{\mathrm{triv}}\oplus k\tau\oplus\rho_1\oplus\cdots\oplus\rho_t$, where $\rho_i:\Phi\to\GL(2,\bbr)$ is an irreducible rotation. That is, there is a linear basis of $\frakS$ so that $\rho(A_0)\in\aut(\frakS)$ can be represented as diagonal block matrices
$$
\rho(A_0)=\left[\begin{matrix}I_m&&&&\\&-I_k&&&\\&&\Phi_1&&\\&&&\ddots&\\&&&&\Phi_t
\end{matrix}\right]\
\text{ where }
\Phi_i=\rho_i(A_0)\in\GL(2,\bbr).
$$
Remark that if $k>0$ then the {order of $\Phi$ is even}, and $\det(\rho_i(A_0))=1$ for all $i$. Hence $\det(\rho(A_0))=1$ if and only if $k$ is even. This is the only case when the infra-solvmanifold is orientable and hence {$k$ is even}.

Using the identities (\ref{*}), we can write $D_*$ and $E_*$ as block matrices
\begin{align*}
D_*=\left[\begin{matrix}D_{\mathrm{triv}}&0&0\\0&D_\tau&0\\{*}&*&\hat{D}\end{matrix}\right],\quad
E_*=\left[\begin{matrix}E_{\mathrm{triv}}&0&0\\0&E_\tau&0\\{*}&*&\hat{E}\end{matrix}\right]
\end{align*}
where $D_{\mathrm{triv}}, E_{\mathrm{triv}}$ are $m\x m$, $D_\tau, E_\tau$ are $k\x k$ and $\hat{D}, \hat{E}$ are $2t\x2t$.

For $A\in\Phi$, $A=A_0^p$ for some $p$ and
$$
A_*=\left[\begin{matrix}I_m&&\\&(-1)^p I_k&\\&&A_*\end{matrix}\right].
$$
Write $\hat\rho=\rho_1\oplus\cdots\oplus\rho_t:\Phi\to\GL(2t,\bbr), A\mapsto \hat\rho(A)=A_*$ (abusing the notation: $\rho(A)=A_*$). Then the identities (\ref{*}) induce
\begin{align*}
\hat\phi(A)_*\hat{D}=\hat{D}A_*,\ \hat\psi(A)_*\hat{E}=\hat{E}A_*.
\end{align*}
Hence, for all $A=A_0^p$ and $B=A_0^q\in\Phi$, we have that
\begin{align}\label{gen}
\det&(E_*-A_*D_*)\det(E_*-B_*D_*)\\
&=\det(E_{\mathrm{triv}}-D_{\mathrm{triv}})^2\det(E_\tau-(-1)^pD_\tau)\det(E_\tau-(-1)^qD_\tau)\notag\\
&\qquad\x\det(\hat{E}-A_*\hat{D})\det(\hat{E}-B_*\hat{D}).\notag
\end{align}
Note here that $\det(\hat{E}-A_*\hat{D})\det(\hat{E}-B_*\hat{D})\ge0$, see \cite[Lemma~6.3]{DP}.
From \cite[Theorem~4.5]{HL-a}, immediately we have:

\begin{Thm}[{Compare with \cite[Theorem~6.1]{DP}}]
Let $f$ and $g$ be continuous maps on an orientable infra-solvmanifold of type $\R$ with cyclic holonomy group $\Phi=\langle A_0\rangle$. If $\rho(A_0)$ has no eigenvalue $-1$, i.e., if $k=0$, then $N(f,g)=|L(f,g)|$.
\end{Thm}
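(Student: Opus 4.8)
The plan is to apply the averaging formula for the Nielsen coincidence number on infra-solvmanifolds of type $\R$, reducing the statement to a purely algebraic observation about the block matrices $D_*$ and $E_*$. First I would recall from \cite[Theorem~4.5]{HL-a} (the coincidence averaging formula) that for maps $f,g$ on an orientable infra-solvmanifold of type $\R$ with holonomy group $\Phi$,
$$
L(f,g)=\frac{1}{|\Phi|}\sum_{A\in\Phi}\det(E_*-A_*D_*),\qquad
N(f,g)=\frac{1}{|\Phi|}\sum_{A\in\Phi}\left|\det(E_*-A_*D_*)\right|.
$$
Thus it suffices to show that, when $k=0$, all the summands $\det(E_*-A_*D_*)$ for $A\in\Phi$ have the same sign; then $N(f,g)=\frac{1}{|\Phi|}\left|\sum_{A\in\Phi}\det(E_*-A_*D_*)\right|=|L(f,g)|$.

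To establish the uniform sign when $k=0$, I would use the identity \eqref{gen}. With $k=0$ the $\tau$-block disappears, so for $A=A_0^p$, $B=A_0^q$,
$$
\det(E_*-A_*D_*)\det(E_*-B_*D_*)
=\det(E_{\mathrm{triv}}-D_{\mathrm{triv}})^2\cdot\det(\hat{E}-A_*\hat{D})\det(\hat{E}-B_*\hat{D}).
$$
The first factor is a square, hence $\ge0$, and the second factor $\det(\hat{E}-A_*\hat{D})\det(\hat{E}-B_*\hat{D})\ge0$ by \cite[Lemma~6.3]{DP} (this is exactly the remark recorded just before the statement). Therefore $\det(E_*-A_*D_*)\det(E_*-B_*D_*)\ge0$ for every pair $A,B\in\Phi$, which says precisely that all the nonzero terms among the $\det(E_*-A_*D_*)$ share a common sign. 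Consequently the sum and the sum of absolute values agree up to an overall sign, giving $N(f,g)=|L(f,g)|$.

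The main point to be careful about — the "hard part" — is not the algebra, which is immediate from \eqref{gen}, but making sure the hypotheses line up: one needs orientability of $M$ (so that the coincidence Lefschetz/Nielsen theory and the averaging formula of \cite{HL-a} apply and so that the holonomy representation has the block shape displayed above), one needs $\Phi$ cyclic (so that Theorem~\ref{Schur} forces every irreducible real summand of $\rho$ to be trivial, the sign representation $\tau$, or a rotation $\rho_i$ with $\det\rho_i(A_0)=1$), and one needs $k=0$ (so that the sign block $\tau$ is absent and the troublesome factor $\det(E_\tau-(-1)^pD_\tau)\det(E_\tau-(-1)^qD_\tau)$, which can be negative, never appears). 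Once these are in place the conclusion is a one-line consequence of \eqref{gen}, \cite[Lemma~6.3]{DP}, and the coincidence averaging formula, exactly paralleling \cite[Theorem~6.1]{DP}.
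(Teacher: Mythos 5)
Your proposal is correct and follows essentially the same route as the paper: the identity \eqref{gen} with $k=0$ reduces to a square times $\det(\hat{E}-A_*\hat{D})\det(\hat{E}-B_*\hat{D})\ge0$ (by \cite[Lemma~6.3]{DP}), so all summands $\det(E_*-A_*D_*)$ share a common sign, and the averaging formulas of \cite[Theorem~4.5]{HL-a} then give $N(f,g)=|L(f,g)|$. This is exactly how the paper deduces the theorem, which it states as an immediate consequence of \eqref{gen} and that lemma.
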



{Assume $k>0$ (is even); then $\Phi=\langle A_0\rangle$ is of even order. Let $\Phi_0=\langle A_0^2\rangle$ and let $\Pi_0$ be the subgroup of $\Pi$ induced by the inclusion $\Phi_0\hookrightarrow \Phi$.} Remark also that if $D_\tau=0$ or $E_\tau=0$, then we still have $N(f,g)=|L(f,g)|$. {We also assume that $D_\tau\ne0$ and $E_\tau\ne0$.}

\begin{Thm}
Then $\Pi_0$ is a subgroup of $\Pi$ of index $2$, $\Pi_0$ is also an {\rm SB}-group and the corresponding infra-solvmanifold $M_0=\Pi_0\bs{S}$ is a double covering of $M=\Pi\bs{S}$; {the maps $f,g$ lift to map $f_0, g_0:M_0\to M_0$ which have the same affine homotopy lifts $(d,D), (e,E)$ as $f$ and $g$}.
\end{Thm}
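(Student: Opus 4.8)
The plan is to verify the three assertions in turn, mirroring the argument used in the infra-nilmanifold case for $\Pi_+$, but now adapted to the subgroup $\Pi_0$ coming from the index-two subgroup $\Phi_0=\langle A_0^2\rangle$ of the cyclic holonomy group $\Phi=\langle A_0\rangle$. First I would check that $\Pi_0$ has index $2$ in $\Pi$ and is torsion free and discrete. Discreteness is immediate since $\Pi_0\subset\Pi\subset S\rtimes C$, and torsion freeness is inherited from $\Pi$. For the index, recall that $\Pi_0$ is by definition the preimage $p^{-1}(\Phi_0)$ under the projection $p:\Pi\to\Phi=\Pi/\Gamma$; since $[\Phi:\Phi_0]=2$ (as $\Phi$ has even order and $\Phi_0$ is the image of squaring), we get $[\Pi:\Pi_0]=2$ and, moreover, $\Gamma=\Pi\cap S\subset\Pi_0$, with $\Pi_0\cap S=\Gamma$ still a lattice of $S$. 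Hence $\Pi_0$ is a finite extension of the lattice $\Gamma$ inside $S\rtimes C$, i.e. an SB-group modeled on the same Lie group $S$ of type $\R$, and its holonomy group is $\Phi_0$.

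Next I would identify the covering. Since $\Gamma\subset\Pi_0\subset\Pi$ with $[\Pi:\Pi_0]=2$, the inclusion induces a regular double covering $M_0=\Pi_0\bs S\to M=\Pi\bs S$ with deck group $\Pi/\Pi_0\cong\Phi/\Phi_0\cong\bbz/2$. That $M_0$ is an infra-solvmanifold of type $\R$ with holonomy $\Phi_0$ follows from the previous paragraph together with the standard description of infra-solvmanifolds recalled in the excerpt (and in \cite{LL-Nagoya}).

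Finally I would produce the lifts $f_0,g_0$. Here the key point is that the affine homotopy lift $(d,D):S\to S$ of $f$ already induces a self-map of $M_0$: by the defining relation $\phi(\alpha)\circ(d,D)=(d,D)\circ\alpha$ for all $\alpha\in\Pi$, it suffices to show $\phi(\Pi_0)\subset\Pi_0$, equivalently $\hat\phi(\Phi_0)\subset\Phi_0$. But $\hat\phi$ is the function on $\Phi$ determined by $\hat\phi(A)D=DA$, and since $\Phi$ is abelian and $\Phi_0$ consists precisely of the squares in $\Phi$, any homomorphism-like compatibility forces $\hat\phi$ to carry squares to squares; more concretely, writing $A=A_0^p$ one reads off from $\hat\phi(A)_*D_*=D_*A_*$ on the rotation block $\hat D$ (using $D_\tau\ne 0$, so the $\tau$-block does not collapse) that the parity of $p$ is preserved, hence $\hat\phi(\Phi_0)\subseteq\Phi_0$. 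Therefore $(d,D)$ descends to $f_0:M_0\to M_0$, which is a lift of $f$ and retains $(d,D)$ as its affine homotopy lift; the identical argument with $(e,E)$ and $\hat\psi$ gives $g_0$. The main obstacle is precisely this last step — showing $\hat\phi(\Phi_0)\subset\Phi_0$ — because $\hat\phi$ is only a set-map a priori, not a homomorphism; one must use the block structure of $D_*$ relative to the decomposition $\rho=m\rho_{\mathrm{triv}}\oplus k\tau\oplus\rho_1\oplus\cdots\oplus\rho_t$ together with the hypotheses $k>0$ even and $D_\tau\ne0$, $E_\tau\ne0$ to pin down how $\hat\phi$ acts on the generator $A_0$ modulo $\Phi_0$.
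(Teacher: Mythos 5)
Your proposal is correct and follows essentially the same route as the paper: the index-two, SB-group and double-covering claims are immediate, and the lifting reduces to showing $\phi(\Pi_0)\subset\Pi_0$, which the paper also gets from the relation $\hat\phi(A)_*D_*=D_*A_*$ together with $D_\tau\ne0$ (and likewise with $\hat\psi$, $E_\tau\ne0$ for $g$). One small correction: the parity of $p$ is read off the $\tau$-block, i.e.\ $(-1)^{p'}D_\tau=(-1)^pD_\tau$ with $D_\tau\ne0$, not the rotation block $\hat D$ as you wrote, though your parenthetical already points to the right hypothesis.
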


\begin{proof}
It is clear that $[\Pi:\Pi_0]=2$ and that $\Pi_0$ is also an SB-group and the corresponding infra-solvmanifold $\Pi_0\bs{S}$ is a double covering of $\Pi\bs{S}$.

To prove the last assertion, we may {consider and} assume that $(d,D):S\to S$ induces $f$ and that $\phi:\Pi\to\Pi$ is a homomorphism such that
$$
\phi(\alpha)(d,D)=(d,D)\alpha, \quad \forall\alpha\in\Pi.
$$
We need to show that $(d,D)$ also induces a map on $\Pi_0\bs{S}$. For this purpose, it is enough to show that $\phi(\Pi_0)\subset\Pi_0$. For any $\beta=(a,A)\in\Pi_0$, let $\phi(\beta)=(b,\hat\phi(A))$. Since $(a,A)\in\Pi_0$, we have $A\in\Phi_0$. The above identity implies that
\begin{align*}
\hat\phi(A)_*D_*=D_*A_* \Rightarrow D_\tau=0 \text{ or } \hat\phi(A)\in\Phi_0.
\end{align*}
Since $D_\tau\ne0$, this finishes the proof of the last assertion.
\end{proof}

For any $A=A_0^p\in\Phi$, we recall from (\ref{gen}) that
\begin{align*}
&\det(E_*-A_*D_*)=\det(E_{\mathrm{triv}}-D_{\mathrm{triv}})\det(E_\tau-(-1)^pD_\tau)\det(\hat{E}-A_*\hat{D})
\end{align*}
and
\begin{align*}
&\det(\hat{E}-\hat{D})\det(\hat{E}-A_*\hat{D})\ge0.
\end{align*}
Let
\begin{align*}
{\epsilon_o=\sgn \det(E_\tau-D_\tau),\
\epsilon_e=\sgn \det(E_\tau+D_\tau).}
\end{align*}
Then $\epsilon_o=\pm\epsilon_e$. {Notice that the values $\epsilon_o$ and $\epsilon_e$ depend both on $f$ and $g$.}  When $\epsilon_o=\epsilon_e$, we still have $N(f,g)=|L(f,g)|$. When $\epsilon_o=-\epsilon_e$, we have that
\begin{align*}
N(f,g)&=\frac{1}{|\Phi|}\sum_{A\in\Phi}|\det(E_*-A_*D_*)|\\
&=\frac{1}{|\Phi|}\left(\sum_{A\in\Phi_0}|\det(E_*-A_*D_*)|+\sum_{A\notin\Phi_0}|\det(E_*-A_*D_*)|\right)\\
&=\frac{\epsilon_o}{|\Phi|}\left(\sum_{A\in\Phi_0}\det(E_*-A_*D_*)
-\sum_{A\notin\Phi_0}\det(E_*-A_*D_*)\right)\\
&=\frac{\epsilon_o}{|\Phi|}\left(2\sum_{A\in\Phi_0}\det(E_*-A_*D_*)
-\sum_{A\in\Phi}\det(E_*-A_*D_*)\right)\\
&=\epsilon_o\left(\frac{1}{|\Phi_0|}\sum_{A\in\Phi_0}\det(E_*-A_*D_*)
-\frac{1}{|\Phi|}\sum_{A\in\Phi}\det(E_*-A_*D_*)\right)\\
&={\epsilon_o(L(f_0,g_0)-L(f,g))}.
\end{align*}

Therefore, we can summarize what we have observed as follows:
\begin{Thm}
Let $M=\Pi\bs{S}$ be an orientable infra-solvmanifold of type $\R$ with cyclic holonomy group $\Phi=\langle A_0\rangle$. Let $\rho:\Phi\to\aut(\frakG)$ be the natural presentation. Then $\rho$ is similar to the sum of irreducible representations $m\rho_{\mathrm{triv}}\oplus k\tau\oplus\rho_1\oplus\cdots\oplus\rho_t$, where $\rho_{\mathrm{triv}}:\Phi\to\GL(1,\bbr)$ is the trivial representation, $\tau:\Phi\to\GL(1,\bbr)$ is the representation given by $\tau(A_0)=-1$, and $\rho_i:\Phi\to\GL(2,\bbr)$ is an irreducible rotation. Let $f,g:M\to M$ be continuous maps with affine homotopy lifts $(d,D),(e,E)$ respectively. Then $D_*$ and $E_*$ can be expressed as block matrices
\begin{align*}
D_*=\left[\begin{matrix}D_{\mathrm{triv}}&0&0\\0&D_\tau&0\\{*}&*&\hat{D}\end{matrix}\right],\quad
E_*=\left[\begin{matrix}E_{\mathrm{triv}}&0&0\\0&E_\tau&0\\{*}&*&\hat{E}\end{matrix}\right]
\end{align*}
where $D_{\mathrm{triv}}, E_{\mathrm{triv}}$ are $m\x m$, $D_\tau, E_\tau$ are $k\x k$ and $\hat{D}, \hat{E}$ are $2t\x2t$. Moreover, we have that:
\begin{enumerate}
\item[$(1)$] If $k=0$, then $N(f,g)=|L(f,g)|$.
\item[$(2)$] If $k>0$ and $\det(E_\tau-D_\tau)\det(E_\tau+D_\tau)\ge0$, then $N(f,g)=|L(f,g)|$.
\item[$(3)$] If $k>0$ and $\det(E_\tau-D_\tau)\det(E_\tau+D_\tau)<0$, then the maps $f,g$ lift to maps $f_0, g_0:M_0\to M_0$ on a double covering $M_0$ of $M$ which have the same homotopy lifts as $f,g$ respectively so that the following formula holds
    $$
    N(f,g)=|L(f_0,g_0)-L(f,g)|.
    $$
\end{enumerate}
\end{Thm}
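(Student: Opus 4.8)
The statement is a synthesis of the block computations already carried out in this section, so the plan is to assemble them in order. First I would run the holonomy representation $\rho\colon\Phi\to\aut(\frakS)$ through Maschke's theorem (Theorem~\ref{Maschke}) to write it as a sum of $\bbr$-irreducibles, and then through Schur's classification (Theorem~\ref{Schur}): since $\Phi=\langle A_0\rangle$ is cyclic, each irreducible constituent, after passing to its faithful quotient by $\ker\rho$, is trivial, the sign character $\tau$, or a planar rotation $\rho_i$. Grouping constituents of equal type gives $\rho\simeq m\rho_{\mathrm{triv}}\oplus k\tau\oplus\rho_1\oplus\cdots\oplus\rho_t$ and the displayed block-diagonal normal form for $\rho(A_0)$; orientability forces $\det\rho(A_0)=1$, hence $k$ even (each $\rho_i(A_0)$ has determinant $1$). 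Plugging this basis into the intertwining identities~(\ref{*}), $\hat\phi(A)_*D_*=D_*A_*$ and $\hat\psi(A)_*E_*=E_*A_*$, pins down the stated shapes of $D_*$ and $E_*$: on the trivial block $A_*$ acts by $+1$ and on the $\tau$-block by $(-1)^p$ (where $A=A_0^p$), so no entries can run from the rotation blocks back into the first two blocks, while entries in the other direction are unconstrained.

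Next I would invoke the coincidence averaging formulas on an orientable infra-solvmanifold of type $\R$: from \cite[Theorem~4.5]{HL-a} (the coincidence analogue of the Nielsen and Lefschetz lines of Theorem~\ref{AV-all}),
$$
N(f,g)=\frac{1}{|\Phi|}\sum_{A\in\Phi}|\det(E_*-A_*D_*)|,\qquad
L(f,g)=\frac{1}{|\Phi|}\sum_{A\in\Phi}\det(E_*-A_*D_*).
$$
With $A=A_0^p$ the block form yields the factorization $\det(E_*-A_*D_*)=\det(E_{\mathrm{triv}}-D_{\mathrm{triv}})\det(E_\tau-(-1)^pD_\tau)\det(\hat E-A_*\hat D)$, and by \cite[Lemma~6.3]{DP} the product $\det(\hat E-\hat D)\det(\hat E-A_*\hat D)$ is $\ge 0$ for every $A$. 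Consequently the sign of each summand $\det(E_*-A_*D_*)$ equals $\sgn\det(E_{\mathrm{triv}}-D_{\mathrm{triv}})\cdot\sgn\det(\hat E-\hat D)$ times $\epsilon_o$ if $p$ is odd and times $\epsilon_e$ if $p$ is even, where $\epsilon_o=\sgn\det(E_\tau-D_\tau)$ and $\epsilon_e=\sgn\det(E_\tau+D_\tau)$; crucially the first two signs do not depend on $A$.

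With this sign bookkeeping in hand the three cases are immediate. If $k=0$ the $\tau$-block disappears and all summands share one sign, so the absolute values factor out of the averaging sum and $N(f,g)=|L(f,g)|$; this is exactly the theorem compared with \cite[Theorem~6.1]{DP}. If $k>0$ but $\det(E_\tau-D_\tau)\det(E_\tau+D_\tau)\ge 0$ — which includes the degenerate cases $D_\tau=0$ or $E_\tau=0$, using that $k$ is even — then $\epsilon_o=\epsilon_e$ and again all summands share a sign, giving $N(f,g)=|L(f,g)|$. If $k>0$ and $\det(E_\tau-D_\tau)\det(E_\tau+D_\tau)<0$, then $\epsilon_o=-\epsilon_e$, so $\det(E_*-A_*D_*)$ has sign $\epsilon_o$ exactly for $A\in\Phi_0=\langle A_0^2\rangle$ and sign $-\epsilon_o$ for $A\notin\Phi_0$; splitting the averaging sum over $\Phi_0$ and its complement, pulling the absolute values out with these signs, and substituting $\sum_{A\notin\Phi_0}=\sum_{A\in\Phi}-\sum_{A\in\Phi_0}$ gives
$$
N(f,g)=\epsilon_o\left(\frac{1}{|\Phi_0|}\sum_{A\in\Phi_0}\det(E_*-A_*D_*)-\frac{1}{|\Phi|}\sum_{A\in\Phi}\det(E_*-A_*D_*)\right)=\epsilon_o\bigl(L(f_0,g_0)-L(f,g)\bigr),
$$
where $f_0,g_0\colon M_0\to M_0$ are the lifts to the double cover $M_0=\Pi_0\bs S$ provided by the preceding theorem (whose proof is precisely where $D_\tau\ne 0$ and $E_\tau\ne 0$ are used, to guarantee $\phi(\Pi_0)\subset\Pi_0$ and $\psi(\Pi_0)\subset\Pi_0$), and the Lefschetz coincidence averaging formula applied on $M_0$ identifies the first average with $L(f_0,g_0)$. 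Taking absolute values gives $N(f,g)=|L(f_0,g_0)-L(f,g)|$.

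The only substantive point — the part I would be most careful about — is this sign analysis: one must verify that $\sgn\det(E_*-A_*D_*)$ depends on $A$ solely through the parity of $p$, which in turn rests on the nonnegativity \cite[Lemma~6.3]{DP} for the rotation blocks and on orientability (so that $k$ is even and the constituent signs are globally consistent). Everything else — the Maschke–Schur decomposition, the block computations forcing the shapes of $D_*$ and $E_*$, and the rearrangement of the averaging sums — is routine.
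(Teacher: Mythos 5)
Your proposal is correct and takes essentially the same route as the paper, which proves this theorem simply by assembling the section's preceding computations (Maschke--Schur decomposition of $\rho$, orientability forcing $k$ even, the block forms from the intertwining identities, the averaging formulas together with the sign control from \cite[Lemma~6.3]{DP}, and the $\Phi_0$-splitting on the double cover), with the paper's stated proof adding only the observation you also make, namely that $D_\tau=0$ or $E_\tau=0$ forces $\det(E_\tau-D_\tau)\det(E_\tau+D_\tau)\ge0$ so that case $(3)$ genuinely falls under the preceding lifting theorem. The only blemishes are cosmetic: your parity labels for $\epsilon_o,\epsilon_e$ are swapped between the bookkeeping paragraph and the case-$(3)$ computation, and the $A$-independent factor $\sgn\det(E_{\mathrm{triv}}-D_{\mathrm{triv}})\cdot\sgn\det(\hat E-\hat D)$ is dropped in the intermediate identity $N(f,g)=\epsilon_o(L(f_0,g_0)-L(f,g))$ (the paper is equally loose), but neither affects the final statement since it is taken in absolute value.
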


\begin{proof}
We are left to notice only one thing: If $D_\tau=0$ or $E_\tau=0$, then $k>0$ is even and so $\det(E_\tau-D_\tau)\det(E_\tau+D_\tau)\ge0$.
\end{proof}

\section{The rationality and the functional equation}\label{Rationality}

We start with an example that shows how different can be the Nielsen, the Reidemeister and the Lefschetz zeta functions.

\begin{Example}[\cite{Fel00}]\label{wedge}
Let $f:S^2\vee S^4\rightarrow S^2\vee S^4$ to be a continuous map of the bouquet of spheres such that the restriction $f|_{S^4}=id_{S^4}$ and the degree of the restriction $f|_{S^2}:S^2\rightarrow S^2$ equal to $-2$. Then $L(f)=0$, hence
$N(f)=0$ since $ S^2\vee S^4$ is simply connected.  For $k>1$ we have $L(f^k)=2+(-2)^k\not=0$,  therefore $N(f^k)=1$. $R(f^k)=1$ for all $k\geq 1$ since $ S^2\vee S^4$ is simply connected. From this we have by direct calculation that
\begin{equation*}
N_f(z)=\exp(-z)\cdot \frac{1}{1-z};\  R_f(z)=  \frac{1}{1-z};\ L_f(z)= \frac{1}{(1-z)^2(1+2z)}.
\end{equation*}
Hence $N_f(z)$ is a meromorphic function, and $R_f(z)$ and  $L_f(z) $ are rational and different.
\end{Example}

We give now some other examples of the Nielsen and the Reidemeister zeta functions on infra-nilmanifolds.
For the explicit computation of the zeta functions, the following is useful.
\begin{Prop}\label{RZ}
Let $f$ be a continuous map on an infra-nilmanifold $\Pi\bs{G}$ with holonomy group $\Phi$. Let $f$ have an affine homotopy lift $(d,D)$ and let $\phi:\Pi\to\Pi$ be the homomorphism induced by $f$. Then
\begin{equation*}
\begin{split}
N_f(z)=\prod_{A\in\Phi}\sqrt[|\Phi|]{\exp\left(\sum_{n=1}^\infty\frac{|\det(A_*-D_*^n)|}{n}z^n\right)}.
\end{split}
\end{equation*}
When $R_f(z)$ is defined, $R_f(z)=R_\phi(z)=N_f(z)$.
\end{Prop}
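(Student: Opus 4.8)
The plan is to derive this proposition directly from the Averaging Formula for Nielsen numbers in Theorem~\ref{AV-all}, applied not just to $f$ but to all iterates $f^n$. First I would record that since $(d,D)$ is an affine homotopy lift of $f$, the pair $(nd', D^n)$ — more precisely the $n$-fold composite of the affine map $(d,D)$ with itself — is an affine homotopy lift of $f^n$, and the homomorphism induced by $f^n$ on $\Pi$ is $\phi^n$. Consequently $(f^n)$ has affine data whose linear part is $D^n$, so $(D^n)_* = (D_*)^n = D_*^n$. This is the key bookkeeping step: it lets me feed $f^n$ into Theorem~\ref{AV-all} and obtain
\[
N(f^n) = \frac{1}{|\Phi|}\sum_{A\in\Phi} \bigl|\det(I - A_* D_*^n)\bigr|.
\]

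Next I would observe that $|\det(I - A_* D_*^n)| = |\det(A_*^{-1} - D_*^n)| \cdot |\det A_*| = |\det(A_*^{-1} - D_*^n)|$ since $A_* \in \rho(\Phi)$ has $|\det A_*| = 1$ (the holonomy acts by automorphisms of $\frakG$ preserving a lattice, hence by $\pm 1$ determinant). As $A$ ranges over the finite group $\Phi$, so does $A^{-1}$, hence $A_*^{-1}$ ranges over $\rho(\Phi)$; therefore we may rewrite
\[
N(f^n) = \frac{1}{|\Phi|}\sum_{A\in\Phi} \bigl|\det(A_* - D_*^n)\bigr|.
\]
Then I would substitute this into the defining power series for $N_f(z)$:
\[
N_f(z) = \exp\left(\sum_{n=1}^\infty \frac{N(f^n)}{n} z^n\right)
= \exp\left(\frac{1}{|\Phi|}\sum_{A\in\Phi}\sum_{n=1}^\infty \frac{|\det(A_* - D_*^n)|}{n} z^n\right),
\]
interchanging the two finite/convergent sums, and recognizing the right-hand side as the $|\Phi|$-th root of the product over $A\in\Phi$ of $\exp\left(\sum_{n\ge 1}\frac{|\det(A_*-D_*^n)|}{n}z^n\right)$, which is exactly the claimed formula. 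The interchange of summation is harmless because everything is a formal power series (or one works inside the common radius of convergence, guaranteed since $N_f(z)$ has positive radius of convergence by \cite{PilFel85}).

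For the last sentence, when $R_f(z)$ is defined I would invoke Theorem~\ref{AV-all} once more: it gives $R(f^n) = R(\phi^n)$ for every $n$, so $R_f(z) = R_\phi(z)$ immediately from the definitions. To get $R_f(z) = N_f(z)$, note that when $R_f(z)$ is defined all $R(f^n)$ are finite, so by the averaging formula $\sigma(\det(A_* - D_*^n)) = |\det(A_* - D_*^n)| \neq 0$ for every $A$ and every $n$ — i.e. $\det(A_* - D_*^n) \neq 0$; hence term by term $R(f^n) = N(f^n)$, and the two zeta functions coincide. The main obstacle, such as it is, is the first bookkeeping step: one must be careful that the affine homotopy lift of $f^n$ really can be taken to have linear part $D^n$ (equivalently that $\hat{\phi^n} = \hat\phi{}^n$ and $(D^n)_* = D_*^n$), which follows from iterating the defining relation \eqref{KBL}; everything after that is formal manipulation of power series and elementary determinant identities.
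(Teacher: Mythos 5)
Your proposal is correct and follows essentially the same route as the paper: both arguments consist of applying the averaging formulas of Theorem~\ref{AV-all} to every iterate $f^n$ (whose affine homotopy lift has linear part $D^n$, inducing $\phi^n$) and then interchanging the finite sum over $\Phi$ with the power series inside the exponential to obtain the product of $|\Phi|$-th roots, with the $R_f(z)=R_\phi(z)=N_f(z)$ claim coming from the finiteness of all $R(f^n)$ forcing $\sigma(\det(A_*-D_*^n))=|\det(A_*-D_*^n)|$. The only difference is minor: the paper assumes $R_f(z)$ is defined and expands $R_\phi(z)$ directly, whereas you derive the $N_f(z)$ formula unconditionally from the Nielsen averaging formula via the reindexing $A\mapsto A^{-1}$ and $|\det A_*|=1$ (best justified simply by the finite order of $A_*$, as the paper itself does elsewhere), which is a slightly more careful treatment of the first claim.
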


\begin{proof}
We may assume $R_f(z)$ is defined. By Theorem~\ref{AV-all}, we have that $R_f(z)=R_\phi(z)=N_f(z)$ and
\begin{equation*}
\begin{split}
R_\phi(z)&=\exp\left(\sum_{n=1}^\infty\frac{R(\phi^n)}{n}z^n\right)\\
&=\exp\left(\sum_{n=1}^\infty\frac{\frac{1}{|\Phi|}\sum_{A\in\Phi}|\det(A_*-F_*^n)|}{n}z^n\right)\\
&=\prod_{A\in\Phi}\left(\exp\left(\sum_{n=1}^\infty\frac{|\det(A_*-F_*^n)|}{n}z^n\right)\right)^{\frac{1}{|\Phi|}}\\
&=\prod_{A\in\Phi}\sqrt[|\Phi|]{\exp\left(\sum_{n=1}^\infty\frac{|\det(A_*-F_*^n)|}{n}z^n\right)}.\qedhere
\end{split}
\end{equation*}
\end{proof}

\begin{Example}\label{ex1}
This is an example used by Anosov to show that the Anosov relation does not hold when the manifold is not a nilmanifold \cite{Anosov}.

Let $\alpha=(a,A)$ and $t_i=(e_i, I_2)$ be elements of $\bbr^2\rtimes\aut(\bbr^2)$, where
$$
a=\left[\begin{matrix}\tfrac{1}{2}\\0\end{matrix}\right],\
A=\left[\begin{matrix}1&\hspace{8pt}0\\0&-1\end{matrix}\right],\
e_1=\left[\begin{matrix}1\\0\end{matrix}\right],\
e_2=\left[\begin{matrix}0\\1\end{matrix}\right].
$$
Then $A$ has period 2, $(a,A)^2=(a+Aa,I_2)=(e_1,I_2)$, and $t_2\alpha=\alpha t_2^{-1}$. Let $\Gamma$ be the subgroup generated by $t_1$ and $t_2$. Then it forms a lattice in $\bbr^2$ and the quotient space $\Gamma\bs\bbr^2$ is the 2-torus. It is easy to check that the subgroup
$$
\Pi=\langle \Gamma, (a,A)\rangle\subset \bbr^2\rtimes\aut(\bbr^2)
$$
generated by the lattice $\Gamma$ and the element $(a,A)$ is discrete and torsion free. Furthermore, $\Gamma$ is a normal subgroup of $\Pi$ of index 2. Thus $\Pi$ is an (almost) Bieberbach group, which is the Klein bottle group, and the quotient space $\Pi\bs\bbr^2$ is the Klein bottle. Thus $\Gamma\bs\bbr^2\to\Pi\bs\bbr^2$ is a double covering projection.

Let $K:\bbr^2\to\bbr^2$ be the linear automorphism given by
$$
K=\left[\begin{matrix}-1&0\\\hspace{8pt}0&2\end{matrix}\right].
$$
It is not difficult to check that $K$ induces $\bar{f}:\Gamma\bs\bbr^2\to\Gamma\bs\bbr^2$ and $f:\Pi\bs\bbr^2\to\Pi\bs\bbr^2$ so that the following diagram is commutative:
$$
\CD
\bbr^2@>K>>\bbr^2\\
@VVV@VVV\\
\Gamma\bs\bbr^2@>{\bar{f}}>>\Gamma\bs\bbr^2\\
@VVV@VVV\\
\Pi\bs\bbr^2@>{f}>>\Pi\bs\bbr^2
\endCD
$$
Note that all the vertical maps are the natural covering maps. In particular, $\Gamma\bs\bbr^2\to\Pi\bs\bbr^2$ is a double covering by the holonomy group of $\Pi/\Gamma$, which is $\Phi=\{I,A\}\cong\bbz_2$.
By Theorem~\ref{AV-all}, we have
\begin{align*}
L(f^n)&=\frac{1}{2}\left(\det(I-K^n)+\det(I-AK^n)\right)=1-(-1)^n,\\
N(f^n)&=2^n(1-(-1)^n).
\end{align*}
In particular, $R(f^n)=2^{n+1}$ when $n$ is odd; otherwise, $R(f^n)=\infty$.
Therefore, the Reidemeister zeta function $R_{f}(z)$ is not defined, and
\begin{align*}
L_{f}(z)&=\exp\left(\sum_{n=1}^\infty\frac{2}{2n-1}z^{2n-1}\right)=\frac{1+z}{1-z},\\
N_{f}(z)&=\exp\left(\sum_{n=1}^\infty\frac{2^{2n}}{2n-1}z^{2n-1}\right)\\
&=\exp\left(\sum_{n=1}^\infty\frac{2}{2n-1}(2z)^{2n-1}\right)=\frac{1+2z}{1-2z}.\\
\end{align*}
\end{Example}

\begin{Example}\label{ex3}
Consider Example~3.5 of \cite{LL-JGP} in which an infra-nilmanifold $M$ modeled on the 3-dimensional Heisenberg group $\Nil$ has the holonomy group of order $2$ generated by $A$ and a self-map $f$ on $M$ is induced by the automorphism $D:\Nil\to\Nil$ given by
$$
D:\left[\begin{matrix}1&x&z\\0&1&y\\0&0&1\end{matrix}\right]
\longmapsto
\left[\begin{matrix}1&-4x-y&z'\\0&1&6x+2y\\0&0&1\end{matrix}\right]
$$
where $z'=-2z-(12x^2+10xy+y^2)$.
Then with respect to the ordered (linear) basis for the Lie algebra of $\Nil$
$$
{\bf e}_1=\left[\begin{matrix}0&0&1\\0&0&0\\0&0&0\end{matrix}\right],\
{\bf e}_2=\left[\begin{matrix}0&1&0\\0&0&0\\0&0&0\end{matrix}\right],\
{\bf e}_3=\left[\begin{matrix}0&0&0\\0&0&1\\0&0&0\end{matrix}\right],
$$
the differentials of $A$ and $D$ are
$$
A_*=\left[\begin{matrix}1&\hspace{8pt}0&\hspace{8pt}0\\0&-1&\hspace{8pt}0\\0&\hspace{8pt}0&-1\end{matrix}\right],\
D_*=\left[\begin{matrix}-2&\hspace{8pt}0&\hspace{8pt}0\\\hspace{8pt}0&-4&-1
\\\hspace{8pt}0&\hspace{8pt}6&\hspace{8pt}2\end{matrix}\right].
$$
By Proposition~\ref{RZ}, we have
\begin{align*}
R_\phi(z)&=\sqrt{\exp\left(\sum_{n=1}^\infty\frac{|\det(I-D_*^n)|}{n}z^n\right)}\sqrt{\exp\left(\sum_{n=1}^\infty\frac{|\det(A_*-D_*^n)|}{n}z^n\right)}.
\end{align*}
Remark that $A_*$ is a block diagonal matrix with $1\x1$ block $I_1$ and $2\x2$ block $-I_2$. We have
\begin{align*}
|\det(A_*-D_*^n)|&=|\det(I_1-D_1^n)\det(-I_2-D_2^n)|\\
&=|\det(I_1-D_1^n)||\det(I_2+D_2^n)|\\
&=|(1-(-2)^n)|(-1)^n\det(I_2+D_2^n)\\
&=(2^n-(-1)^n)(-1)^n\sum_i\tr({\bigwedge}^{\!i}D_2^n).
\end{align*}
Consequently, we obtain
\begin{align*}
&\exp\left(\sum_{n=1}^\infty\frac{|\det(A_*-D_*^n)|}{n}z^n\right)\\
&=\exp\left(\sum_{n=1}^\infty\frac{(2^n-(-1)^n)(-1)^{n}\sum_i\tr({\bigwedge}^{\!i}D_2^n)}{n}z^n\right)\\
&=\exp\left(\sum_{n=1}^\infty\frac{\sum_i\tr({\bigwedge}^{\!i}D_2^n)}{n}(-2z)^n-\sum_{n=1}^\infty\frac{\sum_i\tr({\bigwedge}^{\!i}D_2^n)}{n}z^n\right)\\
&=\prod_i\frac{\det(I-z{\bigwedge}^{\!i}D_2)}{\det(I+2z{\bigwedge}^{\!i}D_2)}\\
&=\frac{1-z}{1+2z}\cdot\frac{1+2z-2z^2}{1-4z-8z^2}\cdot\frac{1+2z}{1-4z}.
\end{align*}
In a similar fashion, we compute
\begin{align*}
&\exp\left(\sum_{n=1}^\infty\frac{|\det(I-D_*^n)|}{n}z^n\right)\\
&=\exp\left(\sum_{n=1}^\infty\frac{|\det(I_1-D_1^n)\det(I_2-D_2^n)|}{n}z^n\right)\\
&=\exp\left(\sum_{n=1}^\infty\frac{(2^n-(-1)^n)(-1)^{n+1}\sum_i(-1)^{i}\tr({\bigwedge}^{\!i}D_2^n)}{n}z^n\right)\\
&=\prod_i\left(\frac{\det(I+2z{\bigwedge}^{\!i}D_2)}{\det(I-z{\bigwedge}^{\!i}D_2)}\right)^{(-1)^i}\\
&=\frac{1+2z}{1-z}\cdot\frac{1+2z-2z^2}{1-4z-8z^2}\cdot\frac{1-4z}{1+2z}.
\end{align*}
The last identity of the above computations follows from the definition of ${\bigwedge}^{\!i}D_2$ (see \cite[Lemma~3.2]{HLP11}). Namely, we have
$$
{\bigwedge}^{\!0}D_2=1,\ {\bigwedge}^{\!1}D_2=D_2,\ {\bigwedge}^{\!2}D_2=\det(D_2)=-2.
$$
In all, we obtain that
\begin{align*}
&N_f(z)=R_f(z)=\frac{1+2z-2z^2}{1-4z-8z^2}.
\end{align*}
\end{Example}

\begin{Thm}\label{infra}
Let $f$ be a continuous map on an  infra-nilmanifold with an affine homotopy lift $(d,D)$.
Assume $N(f)=|L(f)|$. Then the Nielsen zeta function $N_f(z)$ is a rational function and is equal to
\begin{equation*}
N_f(z)=L_f((-1)^qz)^{(-1)^r}
\end{equation*}
where  $q$ is the number of real eigenvalues of $D_*$ which are $<-1$ and
$r$ is the number of real eigenvalues of $D_*$ of modulus $>1$.
When the Reidemeister zeta function $R_f(z)$ is defined, we have
$$
 R_f(z)=R_\phi(z)=N_f(z)
$$.
\end{Thm}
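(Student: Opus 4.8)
The plan is to upgrade the hypothesis, which only constrains $f$ itself, to a relation valid for every iterate, and then read off the zeta function. Concretely, I will prove
\begin{equation*}
N(f^k)=|L(f^k)|=(-1)^{r+qk}\,L(f^k)\qquad(k\ge1).
\end{equation*}
Once this is available everything else is bookkeeping: $\sum_{k\ge1}\frac{N(f^k)}{k}z^k=(-1)^r\sum_{k\ge1}\frac{L(f^k)}{k}\big((-1)^qz\big)^k=(-1)^r\log L_f\big((-1)^qz\big)$, hence $N_f(z)=L_f\big((-1)^qz\big)^{(-1)^r}$, which is a rational function because $L_f$ is. For the final sentence, Theorem~\ref{AV-all} gives $R(f^k)=R(\phi^k)$, and re-indexing $A\mapsto A^{-1}$ in the averaging formulas together with $|\det A_*|=1$ gives $R(f^k)=N(f^k)$ whenever it is finite; so $R_f(z)=R_\phi(z)=N_f(z)$ whenever $R_f(z)$ is defined, which is also the content of Proposition~\ref{RZ}.

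Thus the heart of the matter is the displayed identity. By Theorem~\ref{AV-all}, $L(f^k)=\frac1{|\Phi|}\sum_{A\in\Phi}\det(I-A_*D_*^k)$ and $N(f^k)=\frac1{|\Phi|}\sum_{A\in\Phi}|\det(I-A_*D_*^k)|$, so $N(f)=|L(f)|$ says exactly that all the nonzero numbers $\det(I-A_*D_*)$, $A\in\Phi$, have a single common sign. Now I would put $D_*$ in the Dekimpe--Dugardein normal form recalled before Theorem~\ref{T4.4}: a basis of $\frakG$ in which $\Phi_*=\Phi_1\x\Phi_2$ is block diagonal and $D_*$ is block upper triangular with diagonal blocks $D_1$ and $D_2$, the eigenvalues of $D_1$ having modulus $\le1$ and those of $D_2$ modulus $>1$; write $A_1,A_2$ for the diagonal blocks of $A_*$. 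Then $\det(I-A_*D_*^k)=\det(I-A_1D_1^k)\cdot\det(I-A_2D_2^k)$. Using that $\Phi$ is compact and that $D_1$ is non-expanding while $D_2$ is expanding, one shows (this is where the modulus-one eigenvalues of $D_*$ have to be treated carefully, exactly as in \cite{DeDu}) that $\det(I-A_1D_1^k)\ge0$, and that $\sgn\det(I-A_2D_2^k)=(-1)^{n_2}\,\sgn(\det A_2)\,(\sgn\det D_2)^k$ with $n_2=\dim\frakG_2$. Since $\sgn\det D_2=(-1)^q$ and $n_2\equiv r\pmod2$ (the non-real eigenvalues of $D_2$ come in conjugate pairs), every nonzero term obeys $\sgn\det(I-A_*D_*^k)=(-1)^{r}\,\sgn(\det A_2)\,(-1)^{qk}$. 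Specialising to $k=1$, the common-sign hypothesis forces $\sgn(\det A_2)$ to be independent of $A$ — in effect $\Pi=\Pi_+$ — so for all $k$ every nonzero term carries the single sign $(-1)^{r+qk}$, whence $N(f^k)=(-1)^{r+qk}\frac1{|\Phi|}\sum_{A\in\Phi}\det(I-A_*D_*^k)=(-1)^{r+qk}L(f^k)$; and when $L(f^k)=0$ all terms vanish, so $N(f^k)=0$ as well.

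The step I expect to be the real obstacle is this sign analysis carried out uniformly in $k$: controlling $\det(I-A_1D_1^k)$ when $D_*$ has eigenvalues of modulus exactly $1$, pinning down $\sgn\det(I-A_2D_2^k)$ using only the expansion of $D_2$ and the compactness of $\Phi_2$ (and not commutativity of $A_2$ and $D_2$, which fails in general), and making sure the terms with $\det(I-A_*D_*)=0$ but $\det(I-A_*D_*^k)\ne0$ still carry the sign $(-1)^{r+qk}$. These are precisely the technical points already settled in \cite{DeDu} in the case $\Pi=\Pi_+$; so in practice I would either invoke their Theorem~4.4 for that case and rewrite the exponent via $p+(k+1)n=r+qk$, or re-run their computation under the hypothesis $N(f)=|L(f)|$, which is exactly what forces us back into that case.
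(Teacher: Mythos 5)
Your overall skeleton is the one the paper uses (prove $N(f^k)=(-1)^{r+qk}L(f^k)$ for all $k$, then exponentiate; the $R_f=R_\phi=N_f$ part via the averaging formula is fine), but the step you rely on to get the key identity has a genuine gap, and it is exactly the point you flagged as the obstacle. From $N(f)=|L(f)|$ and Theorem~\ref{AV-all} you only learn that the \emph{nonzero} numbers $\det(I-A_*D_*)$, $A\in\Phi$, carry one common sign; your inference that $\sgn(\det A_2)$ is constant on $\Phi$ (``in effect $\Pi=\Pi_+$'') breaks down precisely when some term vanishes, i.e.\ when $\det(I-A_1D_1)=0$ for an $A$ with $\det A_2=-1$, and at a higher iterate that term can become nonzero with the opposite sign. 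Deferring this to \cite{DeDu} does not repair it: their Theorem~4.4 does not place such a map in the case $\Pi=\Pi_+$, it just gives the other formula $N(f^k)=\pm\left(L(f_+^k)-L(f^k)\right)$, which is not of the form $L_f((-1)^qz)^{(-1)^r}$.

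This is not a removable technicality; in the degenerate case the asserted conclusion itself fails. Take the orientable flat $3$-manifold $\Pi\bs\bbr^3$ where $\Pi$ is generated by the standard lattice $\bbz^3$ and $\alpha=(\frac12 e_1,A)$ with $A=\diag(1,-1,-1)$, and let $f$ be induced by the linear map $D=\diag(3,-1,2)$ (one checks $D$ normalizes $\Pi$: $D\alpha D^{-1}=t_1\alpha$). Here $D_1=-1$, $A_1=-1$, $A_2=\diag(1,-1)$, so $\Pi\ne\Pi_+$; nevertheless $\det(I-A_*D_*)=0$ while $\det(I-D_*)=4$, so $N(f)=|L(f)|=2$ and the hypothesis holds. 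At $k=2$ one gets $\det(I-D_*^2)=0$ and $\det(I-A_*D_*^2)=-80$, hence $L(f^2)=-40$ but $N(f^2)=40$, whereas your formula (with $q=0$, $r=2$, the eigenvalue $-1$ being counted in neither) predicts $N(f^2)=L(f^2)$. So the reduction step cannot be closed without a nondegeneracy assumption such as $\det(I-A_*D_*^k)\ne0$ for all $A,k$ (which holds, e.g., whenever $R_f(z)$ is defined, since then no eigenvalue of $D_*$ is a root of unity); under that proviso your argument does go through and agrees with \cite{DeDu} and the paper. For comparison, the paper argues differently: it quotes \cite[Theorem~8.2.2]{P} to get $N(f^n)=|L(f^n)|$ for all $n$ and \cite[Lemma~8.2.1]{P} to anchor all signs at $\epsilon_n=\sgn\det(I-D_*^n)=(-1)^{r+qn}$ — an anchor that likewise presupposes $\det(I-D_*^n)\ne0$ — so your instinct to worry about the vanishing terms was correct, but it is settled neither by \cite{DeDu} nor by the route you sketch.
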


\begin{proof}
By \cite[Theorem~8.2.2]{P} $N(f)=|L(f)|$ implies $N(f^n)=|L(f^n)|$ for all $n$.
Let $\epsilon_n$ be the sign of $\det(I-D^n_*)$. Let $q$ be the number of real eigenvalues of $D_*$ which are less than $-1$ and $r$ be the number of real eigenvalues of $D_*$ of modulus $>1$. Then $\epsilon_n=(-1)^{r+qn}$.
By Theorem~\ref{AV-all}, we have that $\epsilon_1\det(I-A_*D_*)\ge0$ for all $A\in\Phi$. In particular, we have
$$
\det(I-A_*D_*)\det(I-B_*D_*)\ge0\quad \text{for all $A,B\in\Phi$}.
$$
Choose arbitrary $n>0$. By \cite[Lemma~8.2.1]{P},
$$
\det(I-A_*D_*^n)\det(I-D^n_*)\ge0\quad \text{for all $A\in\Phi$}.
$$
Hence we have $N(f^n)=\epsilon_n L(f^n)=(-1)^{r+qn} L(f^n)$. Consequently,
\begin{align*}
N_f(z)&=\exp\left(\sum_{n=1}^\infty\frac{N(f^n)}{n}z^n\right)\\
&=\exp\left(\sum_{n=1}^\infty\frac{(-1)^{r+qn} L(f^n)}{n}z^n\right)\\
&=\left(\exp\left(\sum_{n=1}^\infty\frac{L(f^n)}{n}((-1)^qz)^n\right)\right)^{(-1)^r}\\ &=L_f((-1)^qz)^{(-1)^r}
\end{align*}
is a rational function.

Assume $R_f(z)$ is defined. So, $R(f^n)=R(\phi^n)<\infty$ for all $n>0$. On infra-nilmanifolds, by Theorem~\ref{AV-all}, it is equivalent to saying that $\det(A_*-D_*^n)\ne0$ for all $A\in\Phi$ and all $n$, and hence $\sigma\left(\det(A_*-D_*^n)\right)=|\det(A_*-D_*^n)|$. Thus
\begin{align*}
R(f^n)=R(\phi^n)&=\frac{1}{|\Phi|}\sum_{A\in\Phi}\sigma\left(\det(A_*-D_*^n)\right)\\
&=\frac{1}{|\Phi|}\sum_{A\in\Phi}|\det(A_*-D_*^n)|=N(f^n).
\end{align*}
This implies that $R_f(z)=R_\phi(z)=N_f(z)$.
\end{proof}

 Therefore, for those classes of maps on infra-nilmanifolds for which Anosov relation $N(f)=|L(f)|$ holds \cite{KL, Malfait, DRM} and for those classes of infra-nilmanifolds for which Anosov relation $N(f)=|L(f)|$ holds for ALL maps \cite{Anosov, DRM-JGP, DRM, DRP}, the Nielsen zeta functions and the Reidemeister zeta functions are rational functions.

In general case, using the results of Theorem~\ref{T4.4},  Dekimpe and  Dugardein described the  Nielsen zeta function of $f$ as follows:

\begin{Thm}[{\cite[Theorem~4.5]{DeDu}}]\label{T4.5}
Let $f$ be a continuous map on an infra-nilmanifold $\Pi\bs{G}$ with an affine homotopy lift $(d,D)$. Then the Nielsen zeta function  is a rational function and is equal to
\begin{equation*}
N_f(z)=\begin{cases}
L_f((-1)^nz)^{(-1)^{p+n}}&\text{when $\Pi=\Pi_+$;}\\
\left(\frac{L_{f_+}((-1)^nz)}{L_f((-1)^nz)}\right)^{(-1)^{p+n}}&\text{when $\Pi\ne\Pi_+$,}
\end{cases}
\end{equation*}
where  $p$ is the number of real eigenvalues of $D_*$ which are $>1$ and $n$ is the number of real eigenvalues of $D_*$ which are $<-1$.

When the Reidemeister zeta function $R_f(z)$ is defined, we have
$$
R_f(z)=R_\phi(z)=N_f(z)
$$.
\end{Thm}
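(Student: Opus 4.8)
The plan is to reduce everything to Theorem~\ref{T4.4} together with the rationality of the Lefschetz zeta function, so the argument runs in parallel to the proof of Theorem~\ref{infra} but uses the two-case formula for $N(f^k)$ instead of the single formula $N(f^k)=\pm L(f^k)$. First I would recall the setup preceding Theorem~\ref{T4.4}: with respect to a suitable basis of $\frakG$ we have the block decomposition of $\Phi_*$ and of $D_*$, the index-$\le 2$ subgroup $\Pi_+$, the constants $p$ (number of real eigenvalues of $D_*$ that are $>1$) and $n$ (number of real eigenvalues of $D_*$ that are $<-1$), and, when $\Pi\ne\Pi_+$, the lift $f_+:M_+\to M_+$ with the same affine homotopy lift $(d,D)$. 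The key observation is that $p$ and $n$ are read off from the eigenvalues of $D_*$, and the eigenvalues of $D_*^k$ are the $k$-th powers of those of $D_*$; hence if $p_k,n_k$ denote the analogous counts for $f^k$ then a real eigenvalue $\mu>1$ of $D_*$ contributes to $p_k$ for every $k$, a real eigenvalue $\mu<-1$ contributes to $p_k$ when $k$ is even and to $n_k$ when $k$ is odd, and a real eigenvalue $\mu\in(-1,1)$ never contributes. Thus $p_k+ (k+1)n_k \equiv p + (k+1)n \pmod 2$; this is exactly the exponent bookkeeping needed.

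Next I would apply Theorem~\ref{T4.4} to each iterate $f^k$, which shares the affine homotopy lift $(d,D)^k=(d', D^k)$ — so the same $\Pi_+$, and the parity statement just established governs the sign. In the case $\Pi=\Pi_+$ this gives
\[
N(f^k)=(-1)^{p+(k+1)n}L(f^k)=(-1)^{p+n}\bigl((-1)^{n}\bigr)^{k}L(f^k)=(-1)^{p+n}L\bigl(f^k\bigr)\cdot\bigl((-1)^n\bigr)^k ,
\]
and then, exactly as in the proof of Theorem~\ref{infra},
\[
N_f(z)=\exp\!\left(\sum_{k=1}^\infty \frac{N(f^k)}{k}z^k\right)
=\left(\exp\!\left(\sum_{k=1}^\infty \frac{L(f^k)}{k}\bigl((-1)^n z\bigr)^k\right)\right)^{(-1)^{p+n}}
= L_f\bigl((-1)^n z\bigr)^{(-1)^{p+n}},
\]
which is rational because $L_f$ is. In the case $\Pi\ne\Pi_+$ the same manipulation with $N(f^k)=(-1)^{p+(k+1)n}\bigl(L(f_+^k)-L(f^k)\bigr)$ splits the sum in the exponential into the difference of two Lefschetz zeta series evaluated at $(-1)^n z$, yielding
\[
N_f(z)=\left(\frac{L_{f_+}\bigl((-1)^n z\bigr)}{L_f\bigl((-1)^n z\bigr)}\right)^{(-1)^{p+n}},
\]
again rational. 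For the last assertion, assume $R_f(z)$ is defined; then by Theorem~\ref{AV-all} each $R(\phi^k)<\infty$ forces $\det(A_*-D_*^k)\ne 0$ for all $A\in\Phi$ and all $k$, so $\sigma(\det(A_*-D_*^k))=|\det(A_*-D_*^k)|$ and hence $R(f^k)=R(\phi^k)=N(f^k)$ for every $k$; taking exponentials gives $R_f(z)=R_\phi(z)=N_f(z)$.

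The one genuinely delicate point is the parity bookkeeping: one must be careful that the constants $p,n$ attached to $D_*$ (not $D_*^k$) are the right ones to put in the final formula, and that the sign $(-1)^{p+(k+1)n}$ coming out of Theorem~\ref{T4.4} for the $k$-th iterate really does factor as $(-1)^{p+n}\cdot((-1)^n)^k$ — this uses that negative real eigenvalues of $D_*$ of modulus $>1$ migrate between the ``$p$'' and ``$n$'' counts as $k$ alternates in parity, while all other eigenvalues do not, so the net effect on the exponent is captured by a single factor of $((-1)^n)^k$. Everything else is the formal computation with $\exp$ and power series already carried out in the proof of Theorem~\ref{infra}, and the invocation of the rationality of $L_f$ (and $L_{f_+}$) from the introduction. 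Strictly speaking this is precisely \cite[Theorem~4.5]{DeDu}, so the proof can also simply cite that; I include the argument for completeness and to record how it meshes with Theorem~\ref{AV-all} for the Reidemeister statement.
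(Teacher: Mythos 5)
Your proposal is correct: the sign factorization $(-1)^{p+(k+1)n}=(-1)^{p+n}\bigl((-1)^n\bigr)^k$, the power-series manipulation, and the Reidemeister argument via Theorem~\ref{AV-all} all check out, and this is exactly the intended route — note the paper itself offers no proof of Theorem~\ref{T4.5} but simply cites \cite[Theorem~4.5]{DeDu}, whose derivation from Theorem~\ref{T4.4} is the same computation you carry out (and parallels the paper's proof of Theorem~\ref{infra}). The only superfluous part is your $p_k,n_k$ bookkeeping for the iterates: Theorem~\ref{T4.4} already states $N(f^k)$ for all $k$ in terms of the fixed $p,n$ of $D_*$, so that verification, while true, is not needed.
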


\begin{Rmk}\label{NtoS1}
In \cite[Theorem~4.5]{DeDu} the  Nielsen zeta function is expressed in terms of Lefschetz zeta functions $L_f(z)$ and $L_{f_+}(z)$ via a table given by  parity of $p$ and $n$.
{The class of infra-solvmanifolds of type $\R$ contains and shares a lot of properties of the class of infra-nilmanifolds such as the averaging formula for Nielsen numbers, see \cite{HLP11,LL-Nagoya}. Therefore, Theorem~\ref{T4.4} and the statement about $N_f(z)$ in Theorem~\ref{T4.5} can be generalized directly to the class of infra-solvmanifolds of type $\R$, see Remark in \cite[Sec.~\!4]{DeDu}.}
\end{Rmk}

To write down a functional equation for the Reidemeister and the Nielsen zeta function, we recall the following functional equation for the Lefschetz zeta function:

\begin{Lemma}[{\cite[Proposition~8]{Fri1}}, see also \cite{del}] \label{Fried}
{Let $M$ be a closed orientable manifold of dimension $m$ and let $f:M\to M$ be a continuous map of degree $d$. Then
$$
L_{f}\left(\frac{\alpha}{dz}\right)=\epsilon\,(-\alpha dz)^{(-1)^m\chi(M)}\,L_{f}(\alpha z)^{(-1)^m}
$$
where $\alpha=\pm1$ and $\epsilon\in\bbc$ is a non-zero constant such that if $|d|=1$ then $\epsilon=\pm1$.}
\end{Lemma}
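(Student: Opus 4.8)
The plan is to reproduce Fried's argument, combining the rational expression
\[
L_f(z)=\prod_{k=0}^{m}\det\bigl(I-f_{*k}\,z\bigr)^{(-1)^{k+1}}
\]
recalled above with Poincar\'e duality. Assume $d\neq0$ (otherwise there is nothing to prove). First I would record the standard ``degree adjunction'': since $f$ has degree $d$ we have $f_*[M]=d\,[M]$ for the fundamental class $[M]\in H_m(M;\bbz)$, so the projection formula gives $f_*\circ(\,\cdot\cap[M])\circ f^{*}=d\,(\,\cdot\cap[M])$ on cohomology. As $(\,\cdot\cap[M])$ is an isomorphism, this shows that over $\bbq$ the operator $f_{*(m-k)}$ on $H_{m-k}(M;\bbq)$ is conjugate to $d\,(f^{*}_{k})^{-1}$ on $H^{k}(M;\bbq)$, where $f^{*}_{k}$ is the transpose of $f_{*k}$ under the Kronecker pairing. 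In particular each $f_{*k}$ is invertible, $b_{m-k}=b_k$, and the eigenvalues of $f_{*(m-k)}$, counted with multiplicity, are exactly the numbers $d/\lambda$ with $\lambda$ running over the eigenvalues of $f_{*k}$.

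From this eigenvalue description one obtains, factor by factor,
\[
\det\bigl(I-f_{*(m-k)}\,t\bigr)=(-dt)^{b_k}\det(f_{*k})^{-1}\det\bigl(I-\tfrac{1}{dt}f_{*k}\bigr).
\]
Next I would substitute $t=\alpha z$; since $\alpha=\pm1$ we have $\tfrac{1}{d\alpha z}=\tfrac{\alpha}{dz}$, so solving for $\det(I-\tfrac{\alpha}{dz}f_{*k})$ and inserting into $L_f\bigl(\tfrac{\alpha}{dz}\bigr)=\prod_k\det(I-f_{*k}\tfrac{\alpha}{dz})^{(-1)^{k+1}}$ splits the product into three pieces. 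The $z$-independent determinants assemble into the constant $\epsilon:=\prod_{k}\det(f_{*k})^{(-1)^{k+1}}$; the powers of $(-\alpha d z)$ collect with total exponent $\sum_k(-1)^{k}b_k=\chi(M)$, which equals $(-1)^m\chi(M)$ since $\chi(M)=0$ when $m$ is odd; and reindexing $k\leftrightarrow m-k$ turns $\prod_k\det(I-f_{*(m-k)}\alpha z)^{(-1)^{k+1}}$ into $\prod_j\det(I-f_{*j}\alpha z)^{(-1)^m(-1)^{j+1}}=L_f(\alpha z)^{(-1)^m}$. Assembling the three pieces yields exactly the asserted functional equation, with the explicit constant $\epsilon$.

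Finally I would verify the two properties of $\epsilon$. Nonvanishing is immediate since each $\det(f_{*k})\neq0$. For the case $|d|=1$ one passes to integral homology modulo torsion: there $f_{*k}$ is an integer matrix, hence $\det(f_{*k})\in\bbz$, while the Poincar\'e duality pairing $H_k\times H_{m-k}\to\bbz$ is unimodular and satisfies $\langle f_*x,f_*y\rangle=d\langle x,y\rangle$; this forces $\det(f_{*k})\det(f_{*(m-k)})=d^{\,b_k}=\pm1$, so each integer $\det(f_{*k})$ is a unit, i.e.\ $\pm1$, whence $\epsilon=\pm1$. The only genuinely delicate points in this argument are getting the degree factor in the Poincar\'e adjunction correct and keeping the sign/index bookkeeping straight in the collection of exponents, together with this last integrality argument; the remaining steps are formal manipulation of the rational expression for $L_f(z)$.
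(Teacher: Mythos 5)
Your argument is correct and is essentially the paper's own proof: both rest on the Poincar\'e-duality fact that the eigenvalues of the induced map in complementary degree are $d/\lambda$, and then perform the same factor-splitting, reindexing $k\leftrightarrow m-k$, and collection of the constant $\epsilon$ and the power of $(-\alpha dz)$ in the rational expression for $L_f(z)$. The only differences are cosmetic (you work with homology and the intersection/cap-product pairing where the paper uses cohomology and the cup-product pairing), plus you spell out the integrality-and-unimodularity argument showing $\epsilon=\pm1$ when $|d|=1$, which the paper states but leaves implicit.
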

\begin{proof}
In the Lefschetz zeta function formula \eqref{Lef}, we may replace $f_*$ by $f^*:H^*(M;\bbq)\to H^*(M;\bbq)$. Let $\beta_k=\dim H_k(M;\bbq)$ be the $k$th Betti number of $M$. Let $\lambda_{k,j}$ be the (complex and distinct) eigenvalues of ${f_*}_k:H_k(M;\bbq)\to H_k(M;\bbq)$

Via the natural non-singular pairing in the cohomology $H^k(M;\bbq)\otimes H^{m-k}(M;\bbq)\to\bbq$, the operators $f^*_{m-k}$ and $d(f^*_k)$ are adjoint to each other. Hence since $\lambda_{k,j}$ is an eigenvalue of $f^*_k$, $\mu_{\ell,j}=d/\lambda_{k,j}$ is an eigenvalue of $f^*_{m-k}=f^*_\ell$. Furthermore, $\beta_k=\beta_{m-k}=\beta_\ell$.

Consequently, we have
\begin{align*}
L_{f}\left(\frac{\alpha}{dz}\right)&=\prod_{k=0}^{m}\prod_{j=1}^{\beta_k} \left(1-\lambda_{k,j}\frac{\alpha}{dz}\right)^{(-1)^{k+1}}\\
&=\prod_{k=0}^{m}\prod_{j=1}^{\beta_k} \left({1-\frac{d}{\lambda_{k,j}}\alpha z}\right)^{(-1)^{k+1}}\left(-\frac{\alpha dz}{\lambda_{k,j}}\right)^{(-1)^{k}}\\
&=\prod_{\ell=0}^{m}\prod_{j=1}^{\beta_{m-\ell}} \left({1-\mu_{\ell,j}\alpha z}\right)^{(-1)^{m-\ell+1}}\prod_{k=0}^{m}\prod_{j=1}^{\beta_k} \left(-\frac{\alpha dz}{\lambda_{k,j}}\right)^{(-1)^{m-\ell}}\\
&=\left(\prod_{\ell=0}^{m}\prod_{j=1}^{\beta_{\ell}} \left({1-\mu_{\ell,j}\alpha z}\right)^{(-1)^{\ell+1}}\prod_{k=0}^{m}\prod_{j=1}^{\beta_k} \left(-\frac{\alpha dz}{\lambda_{k,j}}\right)^{(-1)^{\ell}}\right)^{(-1)^m}\\
&=L_{f}(\alpha z)^{(-1)^m}\cdot(-\alpha dz)^{\sum_{\ell=0}^m(-1)^\ell\beta_\ell}\cdot\prod_{k=0}^{m}\prod_{j=1}^{\beta_k} \lambda_{k,j}^{(-1)^{k+1}}\\
&=L_{f}(\alpha z)^{(-1)^m}\,\epsilon (-\alpha dz)^{(-1)^m\chi(M)}.
\end{align*}
Here,
\begin{align*}
\epsilon&=\prod_{k=0}^{m}\prod_{j=1}^{\beta_k} \lambda_{k,j}^{(-1)^{k+1}}=\pm\prod_{k=0}^m\det(f^*_k). \qedhere
\end{align*}
\end{proof}

 We obtain:

\begin{Thm}[{Functional Equation}]\label{FE-case1}
Let $f$ be a continuous map on an {orientable infra-nilmanifold $M=\Pi\bs{G}$} with an affine homotopy lift $(d,D)$. Then the Reidemeister zeta function, whenever it is defined, and the Nielsen zeta function have the following functional equations:
\begin{equation*}
R_{f}\left(\frac{1}{dz}\right)
=\begin{cases}
R_f(z)^{(-1)^m}\epsilon^{(-1)^{p+n}}&\text{when $\Pi=\Pi_+$;}\\
R_f(z)^{(-1)^m}\epsilon^{-1}&\text{when $\Pi\ne\Pi_+$}
\end{cases}
\end{equation*}
and
\begin{equation*}
N_{f}\left(\frac{1}{dz}\right)
=\begin{cases}
N_f(z)^{(-1)^m}\epsilon^{(-1)^{p+n}}&\text{when $\Pi=\Pi_+$;}\\
N_f(z)^{(-1)^m}\epsilon^{-1}&\text{when $\Pi\ne\Pi_+$}
\end{cases}
\end{equation*}
where $d$ is a degree $f$, $m= \dim M$, $\epsilon$ is a constant in $\bbc^\times$, $\sigma=(-1)^n$, $p$ is the number of real eigenvalues of $D_*$ which are $>1$ and $n$ is the number of real eigenvalues of $D_*$ which are $<-1$. If $|d|=1$ then $\epsilon=\pm1$.
\end{Thm}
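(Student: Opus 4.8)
The strategy is to reduce the statement to the Lefschetz zeta function and then apply Fried's functional equation (Lemma~\ref{Fried}). Since $R_f(z)=R_\phi(z)=N_f(z)$ whenever the Reidemeister zeta function is defined (Theorem~\ref{T4.5}), it suffices to prove the functional equation for $N_f(z)$. By Theorem~\ref{T4.5}, writing $\alpha:=(-1)^n\in\{\pm1\}$ (the number $\sigma$ of the statement),
\[
N_f(z)=
\begin{cases}
L_f(\alpha z)^{(-1)^{p+n}}, & \text{if }\Pi=\Pi_+,\\
\bigl(L_{f_+}(\alpha z)/L_f(\alpha z)\bigr)^{(-1)^{p+n}}, & \text{if }\Pi\ne\Pi_+.
\end{cases}
\]
So the whole task is to feed the substitution $z\mapsto 1/(dz)$ through this identity. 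The extra geometric input I would use is that an infra-nilmanifold has vanishing Euler characteristic --- $M$ is finitely covered by a nilmanifold, an iterated principal torus bundle, so $\chi(M)=0$, and then $\chi(M_+)=2\,\chi(M)=0$ --- which is exactly what annihilates the monomial prefactor $(-\alpha dz)^{(-1)^m\chi}$ in Lemma~\ref{Fried}.

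First I would treat the case $\Pi=\Pi_+$. Lemma~\ref{Fried} applied to $f$ on the closed orientable $m$-manifold $M$, together with $\chi(M)=0$, gives $L_f(\alpha/(dz))=\epsilon\,L_f(\alpha z)^{(-1)^m}$ for a constant $\epsilon\in\bbc^{\times}$ that is $\pm1$ when $|d|=1$. Raising to the power $(-1)^{p+n}$ and comparing with $N_f(z)^{(-1)^m}=L_f(\alpha z)^{(-1)^{m+p+n}}$ yields $N_f(1/(dz))=N_f(z)^{(-1)^m}\epsilon^{(-1)^{p+n}}$, the asserted identity. For $\Pi\ne\Pi_+$ I would run the same computation simultaneously for $f$ on $M$ and for the lift $f_+$ on the double cover $M_+=\Pi_+\bs G$, which has the same affine homotopy lift $(d,D)$: applying Lemma~\ref{Fried} to $f$ and to $f_+$ with constants $\epsilon$ and $\epsilon_+$ (both $\pm1$ if $|d|=1$, both Euler characteristics vanishing), dividing, and raising to the power $(-1)^{p+n}$ produces $N_f(1/(dz))=N_f(z)^{(-1)^m}(\epsilon_+/\epsilon)^{(-1)^{p+n}}$, which is the asserted identity once the functional-equation constant is named $(\epsilon/\epsilon_+)^{(-1)^{p+n}}$; it is $\pm1$ when $|d|=1$. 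The statement for $R_f$ follows at once from $R_f=N_f$.

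For the second case to be legitimate I must know that Lemma~\ref{Fried} genuinely applies to $f_+$, and this is the step I expect to cost the most care: I would check that $M_+$ is a closed manifold of dimension $m$; that it is orientable, since in a basis where $\rho(\Phi)$ is block diagonal $\diag(A_1,A_2)$ and $\Pi_+=\{\alpha\mid\det A_2=1\}$, orientability of $M$ forces $\det A_1=\det A_2$ for every $A\in\Phi$, so that $\det A_*=1$ on the holonomy group of $\Pi_+$; and that $\deg f_+=d$, because $f_+$ covers $f$ through the covering $q\colon M_+\to M$, whence $\deg(q)\deg(f_+)=\deg(f)\deg(q)$ with $\deg(q)\ne0$. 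The other delicate point is simply keeping the constant under control, so that ``$\epsilon=\pm1$ when $|d|=1$'' survives being passed through the quotient $L_{f_+}/L_f$; the sign bookkeeping with the exponents $(-1)^{p+n}$ and $(-1)^m$ is then routine.
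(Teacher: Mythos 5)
Your proposal is correct and follows essentially the same route as the paper: reduce to the Lefschetz zeta function via Theorem~\ref{T4.5}, apply Fried's functional equation (Lemma~\ref{Fried}) to $f$ on $M$ (and to $f_+$ on $M_+$ in the case $\Pi\ne\Pi_+$), and use $\chi(M)=\chi(M_+)=0$ to kill the monomial prefactor. The only genuine local differences are in the case $\Pi\ne\Pi_+$: where you obtain $\deg f_+=\deg f$ from multiplicativity of degree along the double covering $q$ (using $\deg(q)\ne0$), the paper instead proves via \cite[III.2]{Bredon} that the deck transformation acts trivially on $H^m(M_+;\bbq)$, so $\pi^*\colon H^m(M;\bbq)\to H^m(M_+;\bbq)$ is an isomorphism; both are valid, yours being the more elementary. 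Also, your explicit separation of the two Fried constants $\epsilon$ and $\epsilon_+$, giving the final constant $(\epsilon_+/\epsilon)^{(-1)^{p+n}}$, is cleaner bookkeeping than the paper's use of a single symbol $\epsilon$ for both manifolds, and it matches the theorem since the constant is only asserted to lie in $\bbc^\times$ and to be $\pm1$ when $|d|=1$.
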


\begin{proof}
Assume $\Pi=\Pi_+$. Then $R_f(z)= N_f(z)=L_f(\sigma z)^{(-1)^{p+n}}$. By Lemma~\ref{Fried}, we have
\begin{align*}
R_f\left(\frac{1}{dz}\right)=N_f\left(\frac{1}{dz}\right)&= L_f\left(\frac{\sigma}{dz}\right)^{(-1)^{p+n}}\\
&=\left(\epsilon(-\sigma dz)^{(-1)^m\chi(M)}L_f(\sigma z)^{(-1)^m}\right)^{(-1)^{p+n}} \\
&=N_f(z)^{(-1)^m}\epsilon^{(-1)^{p+n}}(-\sigma dz)^{(-1)^{m+p+n}\chi(M)}\\
&= R_f(z)^{(-1)^m}\epsilon^{(-1)^{p+n}}(-\sigma dz)^{(-1)^{m+p+n}\chi(M)}.
\end{align*}

Assume now that $\Pi\ne\Pi_+$. First we claim that $f$ and $f_+$ have the same degree. Let $\pi:M_+\to M$ be the natural double covering projection. Then $\Pi/\Pi_+\cong\bbz_2$ is the group of covering transformations of $\pi$. By \cite[III.2]{Bredon}, the homomorphism $\pi^*:H^m(M;\bbq)\to H^m(M_+;\bbq)$ induces an isomorphism $\pi^*:H^m(M;\bbq)\to H^m(M_+;\bbq)^{\Pi/\Pi_+}$. In particular, $\pi^*$ is injective.
If $x$ is the nontrivial covering transformation, we have the commutative diagram

\centerline {\xymatrix{M_+ \ar[dr]_\pi \ar[rr]^{x}&& M_+\ar[dl]^{\pi}\\
&M&}}
\smallskip

\noindent
This induces the following commutative diagram

\centerline {\xymatrix{H^m(M_+;\bbq)  \ar[rr]^{x^*}&& H^m(M_+;\bbq)\\
&H^m(M;\bbq)\ar[ul]^{\pi^*}\ar[ur]_{\pi^*}&}}
\smallskip

\noindent
We denote generators of $H^m(M;\bbq)$ and $H^m(M_+;\bbq)$ by $[M]$ and $[M_+]$, respectively.
The above diagram shows that $x^*(\pi^*([M]))=\pi^*([M])$, which induces that $x^*([M_+])=[M_+]$ as $\pi^*$ is injective, and hence $x$ acts on $H^m(M_+;\bbq)$ trivially. In other words, $H^m(M_+;\bbq)=H^m(M_+;\bbq)^{\Pi/\Pi_+}$ and $\pi^*:H^m(M;\bbq)\to H^m(M_+;\bbq)$ is an isomorphism. This implies that $f$ and $f_+$ have the same degree.

By Theorem~\ref{T4.5} and Lemma~\ref{Fried}, we have
\begin{align*}
R_f\left(\frac{1}{dz}\right) = N_f\left(\frac{1}{dz}\right) &= L_{f_+}\left(\frac{\sigma}{dz}\right)^{(-1)^{p+n}}\cdot
L_f\left(\frac{\sigma}{dz}\right)^{(-1)^{p+n+1}} \\
&= \left(\epsilon(-\sigma dz)^{(-1)^m\chi(M)}L_{f_+}(\sigma z)^{(-1)^m}\right)^{(-1)^{p+n}}\\
&\quad\x\left(\epsilon(-\sigma dz)^{(-1)^m\chi(M)}L_{f}(\sigma z)^{(-1)^m}\right)^{(-1)^{p+n+1}}\\
&=N_f(z)^{(-1)^m}\left(\epsilon(-\sigma dz)^{(-1)^m\chi(M)}\right)^{-1}\\
&= R_f(z)^{(-1)^m}\left(\epsilon(-\sigma dz)^{(-1)^m\chi(M)}\right)^{-1}.
\end{align*}
On the other hand, it is known that $\chi(M)=0$, e.g. see the remark below, which finishes our proof.
\end{proof}

\begin{Rmk}
Let $G$ be a torsion-free polycylic group. Then $\chi(G)=0$. For, by induction, we may assume that $G$ is an extension of $\bbz^m$ by $\bbz^n$; then as $\chi(\bbz)=0$, we have $\chi(G)=\chi(\bbz^m)\chi(\bbz^n)=0$, \cite[Theorem~6.4.2]{Dekimpe}. {Another proof: A solvmanifold is aspherical and its fundamental group contains a nontrivial Abelian normal subgroup. By Gottlieb's theorem, its Euler characteristic is zero.} If $S$ is a torsion-free extension of $G$ by a finite group of order $k$, then $k\cdot\chi(S)=\chi(G)=0\Rightarrow \chi(S)=0$.
\end{Rmk}

\begin{Rmk}\label{NtoS2}
As it is mentioned above, since Theorem~\ref{T4.5} is true for the Nielsen zeta functions on infra-solvmanifolds of type $\R$, the functional equation for the Nielsen zeta functions in Theorem~\ref{FE-case1} is true on infra-solvmanifolds of type $\R$ (see Theorem~\ref{zeta-S} for the Reidemeister zeta functions).
\end{Rmk}

\section{Asymptotic Nielsen numbers}\label{Asymptotic}

The growth rate of a sequence $a_n$ of complex numbers is defined by
 $$
 \grow (a_n):=\max \left\{1,  \limsup_{n \rightarrow \infty} |a_n|^{1/n}\right\}.
$$
We define  the asymptotic Nielsen  number \cite{I} and the asymptotic Reidemeister number  to be the growth rate
 $N^{\infty}(f) :=  \grow(N(f^n))$ and $ R^{\infty}(f) :=  \grow(R(f^n))$ correspondingly.
 These asymptotic numbers are homotopy type invariants.
 We denote by $\sp(A)$ the spectral radius of the matrix or the operator $A$, $\sp(A)=\lim_n \sqrt[n]{\| A^n \| |}$ which coincide with {the largest  modulus of an eigenvalue of $A$}.
 We denote by $ \bigwedge F_* :=\bigoplus_{\ell=0}^m\bigwedge^\ell F_* $ a linear operator induced in the exterior algebra $\bigwedge^* \bbr^m :=\bigoplus_{\ell=0}^m\bigwedge^\ell \bbr^m $ of $\frakG$ considered as the linear space $\bbr^m$.

\begin{Thm}[{see also the proof of \cite[Theorem~1.5]{mp}}]\label{MP-nil}
Let $M=\Gamma\bs{S}$ be a {special solvmanifold of type $\R$} and let $f$ be a continuous map on $M$ with a Lie group homomorphism $D:S\to S$ as a homotopy lift. Then we have
 $$
 N^{\infty}(f) =\sp(\bigwedge D_{*})
 $$
 provided that $1$ is not in the spectrum of $D_{*}$.
\end{Thm}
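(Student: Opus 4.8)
The plan is to reduce the statement to a linear‑algebra growth estimate. Since $D\colon S\to S$ is a Lie group homomorphism which is a homotopy lift of $f$, its iterate $D^n$ is a Lie group homomorphism which is a homotopy lift of $f^n$, with differential $(D^n)_*=D_*^n$. The special solvmanifold $M=\Gamma\bs S$ is an infra-solvmanifold of type $\R$ with trivial holonomy group, so the averaging formula for Nielsen numbers on infra-solvmanifolds of type $\R$ (\cite{LL-Nagoya,HLP11}) degenerates to
$$
N(f^n)=|\det(I-D_*^n)|=|L(f^n)|\qquad (n\ge1).
$$
Hence $N^{\infty}(f)=\grow\bigl(|\det(I-D_*^n)|\bigr)$, and everything comes down to identifying this growth rate with $\sp(\bigwedge D_*)$ under the hypothesis $1\notin\sp(D_*)$.

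First I would set up the linear algebra. Let $\lambda_1,\dots,\lambda_m$ be the eigenvalues of $D_*$, so that $|\det(I-D_*^n)|=\prod_{i=1}^{m}|1-\lambda_i^n|$; since the eigenvalues of $\bigwedge D_*=\bigoplus_{\ell=0}^{m}\bigwedge^{\ell}D_*$ are the products $\prod_{i\in T}\lambda_i$ over the subsets $T\subseteq\{1,\dots,m\}$, one gets $\Lambda:=\sp(\bigwedge D_*)=\prod_{|\lambda_i|>1}|\lambda_i|\ge1$, the empty product corresponding to the eigenvalue $1$ of $\bigwedge^{0}D_*$. The upper bound is then immediate: $\prod_i|1-\lambda_i^n|\le\prod_i(1+|\lambda_i|^n)\le 2^m\Lambda^n$, so $\limsup_n|\det(I-D_*^n)|^{1/n}\le\Lambda$; in particular the case $\Lambda=1$ is finished.

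For the lower bound — which is essentially the growth computation carried out in the proof of \cite[Theorem~1.5]{mp} and can be quoted from there — assume $\Lambda>1$ and split $\prod_i|1-\lambda_i^n|$ according to whether $|\lambda_i|>1$, $|\lambda_i|<1$ or $|\lambda_i|=1$. The factors with $|\lambda_i|>1$ contribute at least $c\,\Lambda^n$ with $c>0$ for $n$ large, and each factor with $|\lambda_i|<1$ tends to $1$. For the factors with $|\lambda_i|=1$ the hypothesis $1\notin\sp(D_*)$ enters decisively: those $\lambda_i$ that are roots of unity have order $\ge 2$, so along the arithmetic progression $n\equiv 1\pmod{D}$, where $D$ is the least common multiple of these orders, the corresponding factors are bounded below by a fixed positive constant; and for the remaining $\lambda_i$ (of modulus $1$, not roots of unity) Weyl equidistribution of $(\lambda_i^n)_n$ on the unit circle along that same progression, together with Jensen's formula $\int_{0}^{1}\log|1-\lambda_i\,e^{2\pi\sqrt{-1}\,t}|\,dt=\log^{+}|\lambda_i|=0$, shows that the product of these factors raised to the power $1/n$ tends to $1$ along a subsequence. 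Multiplying the three contributions gives $|\det(I-D_*^n)|^{1/n}\to\Lambda$ along that subsequence, so $\limsup_n|\det(I-D_*^n)|^{1/n}\ge\Lambda$; with the upper bound and $\Lambda\ge1$ this yields $N^{\infty}(f)=\Lambda=\sp(\bigwedge D_*)$.

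I expect the lower bound to be the only real obstacle: the unit-modulus eigenvalues can make $\det(I-D_*^n)$ small, and even vanish along a subsequence when some $\lambda_i$ is a nontrivial root of unity, so one must exhibit a single subsequence along which \emph{none} of these factors decays exponentially. This is exactly what the assumption $1\notin\sp(D_*)$ buys: an eigenvalue equal to $1$ would force $\det(I-D_*^n)\equiv 0$ — hence $N^{\infty}(f)=1$ — while $\sp(\bigwedge D_*)$ could be arbitrarily large, so the formula genuinely requires that hypothesis.
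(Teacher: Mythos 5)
Your proposal is correct, and its skeleton is the same as the paper's: reduce via \cite[Theorem~2.2]{LL-Nagoya} (trivial holonomy) to $N(f^n)=|\det(I-D_*^n)|=\prod_j|1-\lambda_j^n|$, and then compute the growth rate by separating eigenvalues according to modulus, with $\sp(\bigwedge D_*)=\max\{1,\prod_{|\lambda_j|>1}|\lambda_j|\}$. Where you differ is in the key lower-bound step for the unit-modulus eigenvalues. The paper splits into the cases ``$N(f^n)\neq 0$ for all $n$'' and ``$N(f^n)=0$ for some $n$,'' and in either case disposes of the eigenvalues with $|\lambda|\le 1$ simply by the bound $\log|1-\lambda^n|\le\log 2$ (passing, in the second case, to the infinitely many $k$ for which the root-of-unity factors are nonzero); it then exchanges $\limsup$ with the finite sum. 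You instead fix the arithmetic progression $n\equiv 1\ (\mathrm{mod}\ D)$, $D$ the l.c.m.\ of the orders of the root-of-unity eigenvalues (nontrivial since $1\notin\Spec(D_*)$), and control the remaining modulus-one eigenvalues by equidistribution. This is genuinely more careful: it addresses the possibility that $|1-\lambda^n|$ is extremely small for $|\lambda|=1$ not a root of unity, which the paper's $\log 2$ bound alone does not rule out. One caveat: your appeal to Jensen's formula suggests applying Weyl equidistribution to the unbounded test function $\log|1-z|$, which is not automatic (Ces\`aro averages of $\log|1-\lambda^n|$ can misbehave for Liouville-type angles). You do not need it: by equidistribution of each orbit $(\lambda_i^{1+kD})_k$, the set of $n$ in the progression with $|1-\lambda_i^n|<\delta$ has upper density $O(\delta)$ for each of the finitely many such $\lambda_i$, so for $\delta$ small there are infinitely many $n$ in the progression with all these factors $\ge\delta$; along that subsequence the product of the large-eigenvalue factors grows like $\Lambda^n$, the $|\lambda|<1$ factors tend to $1$, and the root-of-unity factors are constant, giving $\limsup_n N(f^n)^{1/n}\ge\Lambda$, which together with your upper bound $2^m\Lambda^n$ completes the proof. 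With that small repair your route is sound, and arguably sharper than the paper's at exactly the delicate point.
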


\begin{proof}
We give a \emph{very elementary proof} of this theorem. {Compare with the proof of \cite[Theorem~1.5]{mp} in which the authors are interested only in the case of positive topological entropy which excludes the case $N^\infty(f)=1$.}

By \cite[Theorem~2.2]{LL-Nagoya}, we may assume that $f$ is induced by a Lie group homomorphism $D:S\to S$.
Let $\{\lambda_1,\cdots,\lambda_m\}$ be the eigenvalues of $D_*$, counted with multiplicities.
First we note from definition that
\begin{align*}
\sp(\bigwedge D_*)=\begin{cases}\prod_{|\lambda_j|>1}|\lambda_j|&\text{when $\sp(D_*)>1$}\\
1 &\text{when $\sp(D_*)\le1$.}\end{cases}
\end{align*}
In the case when $\sp(D_*)\le1$, the eigenvalues of $\bigwedge^{q\ge1} D_*$ are multiples of eigenvalues of $D_*$, which are $\le1$. On the other hand $\bigwedge^0D_*=\id$, and hence $\sp(\bigwedge D_*)=1$.

Recalling $N(f^n)=|\det(I-D_*^n)|=\prod_{j=1}^m|1-\lambda_j^n|$, we first consider the case where $N(f^n)\ne0$ for all $n$. Then we have
\begin{align*}
\log\limsup_{n\to\infty} N(f^n)^{1/n}&=\limsup_{n\to\infty}\frac{1}{n}\sum_{j=1}^m\log|1-\lambda_j^n|\\
&=\sum_{j=1}^m\limsup_{n\to\infty}\frac{1}{n}\log|1-\lambda_j^n|.
\end{align*}
If $|\lambda|\le1$ then $\limsup_n\frac{1}{n}\log|1-\lambda^n|=0$. For, $\log|1-\lambda^n|\le\log2$. If $|\lambda|>1$ then using L'H\^{o}pital's rule, we have
$$
|\lambda|^n-1\le|1-\lambda^n|\le |\lambda|^n+1
\Rightarrow
\lim_{n\to\infty}\frac{1}{n}\log|1-\lambda^n|=\log|\lambda|.
$$
Hence
\begin{align*}
N^\infty(f)&=\max\left\{1,\limsup_{n\to\infty}N(f^n)^{1/n}\right\}\\
&=\max\left\{1,\prod_{|\lambda|>1}|\lambda|\right\}
=\sp(\bigwedge D_*).
\end{align*}

Next we consider the case where $N(f^n)=0$ for some $n$. Thus some $\lambda_j$ is an $n$th root of unity. For each such $\lambda_j$, consider all $k$'s for which $|1-\lambda_j^k|\ne0$. Since by the assumption $\lambda_j\ne1$, there are infinitely many such $k$'s. Furthermore, there are infinitely many $k$'s for which $|1-\lambda_j^k|\ne0$ for all such (finitely many) $\lambda_j$. Therefore, {when $\sp(D_*)>1$} we have
\begin{align*}
\log\limsup_{n\to\infty} N(f^n)^{1/n}
&=\limsup_{k\to\infty}\frac{1}{k}\log N(f^k)\\
&=\limsup_{k\to\infty}\frac{1}{k}\sum_{j=1}^m\log|1-\lambda_j^k|\\
&=\sum_{|\lambda|>1}\limsup_{k\to\infty}\frac{1}{k}\log|1-\lambda^k|\\
&=\log\left(\prod_{|\lambda|>1}|\lambda|\right);
\end{align*}
{when $\sp(D_*)\le1$ we have $\log\limsup_{n} N(f^n)^{1/n}=0$.}
This completes the proof.
\end{proof}

In fact, what we have shown in the above proof is the following:
\begin{Cor}\label{MP-m}
Let $D$ be a matrix with eigenvalues $\lambda_1,\cdots,\lambda_m$, counted with multiplicities. Let $L(D^n)=\det(I-D^n)$. If $1$ is not in the spectrum of $D$, then
$$
\mathrm{Growth}\left(L(D^n)\right)=\sp(\bigwedge D).
$$
\end{Cor}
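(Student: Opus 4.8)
The plan is to observe that Corollary~\ref{MP-m} is nothing more than the computational core of the proof of Theorem~\ref{MP-nil}, stripped of its topological packaging. So the proof is a matter of pointing to what was already done, and then noting the one extra case that must be checked because we no longer get to assume the eigenvalues of $D$ are real or have any particular structure coming from a Lie group automorphism. Concretely, $L(D^n)=\det(I-D^n)=\prod_{j=1}^m(1-\lambda_j^n)$, so $|L(D^n)|=\prod_{j=1}^m|1-\lambda_j^n|$, and $\mathrm{Growth}(L(D^n))=\max\{1,\limsup_n |L(D^n)|^{1/n}\}$ by definition.

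First I would dispose of the case $\sp(D)\le 1$: every eigenvalue of $\bigwedge^{q}D$ for $q\ge 1$ is a product of $q$ eigenvalues of $D$, hence of modulus $\le 1$, while $\bigwedge^0 D=\id$ contributes the eigenvalue $1$; thus $\sp(\bigwedge D)=1$. On the other side, $|1-\lambda_j^n|\le 1+|\lambda_j|^n\le 2$ for every $j$ and every $n$, so $|L(D^n)|\le 2^m$ is bounded, giving $\limsup_n |L(D^n)|^{1/n}\le 1$ and hence $\mathrm{Growth}(L(D^n))=1=\sp(\bigwedge D)$. Second, in the case $\sp(D)>1$, I would recall the elementary estimates from the proof of Theorem~\ref{MP-nil}: if $|\lambda|\le 1$ then $\limsup_n \frac1n\log|1-\lambda^n|=0$ since $\log|1-\lambda^n|\le\log 2$; and if $|\lambda|>1$ then from $|\lambda|^n-1\le|1-\lambda^n|\le|\lambda|^n+1$ one gets $\lim_n\frac1n\log|1-\lambda^n|=\log|\lambda|$. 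If $L(D^n)\ne 0$ for all $n$, summing these over $j$ gives $\limsup_n\frac1n\log|L(D^n)|=\sum_{|\lambda_j|>1}\log|\lambda_j|=\log\sp(\bigwedge D)$, and since $\sp(\bigwedge D)>1$ the maximum with $1$ is harmless. If $L(D^n)=0$ for some $n$ — this is exactly where the hypothesis ``$1$ is not in the spectrum of $D$'' is used — then finitely many $\lambda_j$ are roots of unity ($\ne 1$), and one passes to the (infinite) subsequence of indices $k$ on which $|1-\lambda_j^k|\ne 0$ simultaneously for all these offending $\lambda_j$; along this subsequence the $\limsup$ computation goes through verbatim, and on the complementary indices the relevant factors only make $|L(D^k)|$ smaller, so they cannot increase the growth rate.

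The only genuine thing to double-check — and the place I expect a careful reader to want a word — is that passing to a subsequence of $k$ really does compute the $\limsup$ correctly: one must argue that the terms $\frac1k\log|1-\lambda_j^k|$ for the root-of-unity eigenvalues $\lambda_j$, when nonzero, are still $o(1)$ (they are, since $|1-\lambda_j^k|\le 2$), so deleting the indices where they vanish changes neither the $\limsup$ nor its value. Everything else is a transcription of the argument already given for Theorem~\ref{MP-nil}, so the corollary follows. I would therefore keep the proof to a single short paragraph that says precisely this: ``This is exactly what the proof of Theorem~\ref{MP-nil} establishes, once one notes that $N(f^n)=|\det(I-D_*^n)|$ was the only property of $N(f^n)$ used there; the argument used no special feature of $D_*$ beyond $1\notin\mathrm{spec}(D_*)$.''
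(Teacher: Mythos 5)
Your proposal is correct and is essentially identical to the paper's own treatment: the paper proves Corollary~\ref{MP-m} precisely by remarking that the proof of Theorem~\ref{MP-nil} only used $N(f^n)=|\det(I-D_*^n)|=\prod_j|1-\lambda_j^n|$ and the hypothesis $1\notin\mathrm{spec}(D_*)$, with the same split into $\sp(D)\le 1$ versus $\sp(D)>1$ and the same passage to the infinite subsequence of indices where no factor vanishes. Your extra remark that the discarded indices (where $L(D^k)=0$) cannot raise the $\limsup$ is a harmless elaboration of the same argument, not a different route.
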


\bigskip

Recall that if $f:M\to M$ is a continuous map on an infra-nilmanifold $M=\Pi\bs{G}$ with the holonomy group $\Phi$ and if $f$ has an affine homotopy lift $(d,D)$, then $f$ induces a homomorphism $\phi:\Pi\to\Pi$ defined by the rule:
$$
\forall\alpha\in\Pi,\ \phi(\alpha)\circ(d,D)=(d,D)\circ\alpha.
$$
Furthermore, the homomorphism $\phi$ induces a {function} $\hat\phi:\Phi\to\Phi$ satisfying the identity (\ref{Dekimpe-eq}):
$$
\forall A\in\Phi,\ \hat\phi(A)D=DA.
$$

For any $n\ge1$, we can observe that:
\begin{enumerate}
\item $f^n$ has an affine homotopy lift $(d,D)^n=(*,D^n)$,
\item $f^n$ induces a homomorphism $\phi^n:\Pi\to\Pi$,
\item the homomorphism $\phi^n$ induces a function $\widehat{\phi^n}={\hat\phi}^n:\Phi\to\Phi$.
\end{enumerate}

Recall from Theorem~\ref{AV-all} the averaging formula:
$$
N(f^n)=\frac{1}{|\Phi|}\sum_{A\in\Phi}|\det(I-A_*D_*^n)|.
$$
Since
$$
\frac{1}{|\Phi|}|\det(I-D_*^n)|\le N(f^n),
$$
we have
\begin{align*}
&\frac{1}{n}\log N(f^n)\ge\frac{1}{n}\left(\log|\det(I-D_*^n)|-\log|\Phi|\right)\\
&\Rightarrow
\limsup\frac{1}{n}\log N(f^n)\ge\limsup\frac{1}{n}\left(\log|\det(I-D_*^n)|\right).
\end{align*}
This induces from Corollary~\ref{MP-m} that
\begin{align*}
\sp(\bigwedge D_*)=\mathrm{Growth}(L(D_*^n))\le N^\infty(f).
\end{align*}

Next we recall \cite[Lemma~3.1]{DRM}: Give $A\in\Phi$, we can choose a sequence $(B_i)_{i\in\bbn}$ of elements in $\Phi$ be taking $B_1=A$ and such that $B_{i+1}=\hat\phi(B_i)$, associated to $f$. Since $\Phi$ is finite, this sequence will become periodic from a certain point onwards. Namely, there exist $j,k\ge1$ such that $B_{j+k}=B_j$. It is shown in \cite[Lemma~3.1]{DRM} that
\begin{enumerate}
\item $\forall i\in\bbn,\ \det(I-\hat\phi(B_i)_*D_*)=\det(I-\hat\phi(B_{i+1})_*D_*)$,
\item $\exists \ell\in\bbn$ such that $(\hat\phi(B_j)_*D_*)^\ell=D_*^\ell$,
\end{enumerate}
Since $A$ is of finite order, $\det A_*=\pm1$.

Let $\lambda_1,\cdots,\lambda_m$ be the eigenvalues of $D_*$ counted with multiplicities and let $\mu_1,\cdots,\mu_m$ be the eigenvalues of $\hat\phi(B_j)_*D_*$ counted with multiplicities. Since $(\hat\phi(B_j)_*D_*)^\ell=D_*^\ell$, $(\hat\phi(B_j)_*D_*)^\ell$ has the eigenvalues
$$
\{\lambda_1^\ell, \cdots, \lambda_m^\ell\}=\{\mu_1^\ell,\cdots,\ \mu_m^\ell\}.
$$
We may assume that $\lambda_i^\ell=\mu_i^\ell$ for all $i=1,\cdots,m$. Thus $|\mu_i|=|\lambda_i|$.
Now, \begin{align*}
|\det(I-A_*D_*)|&=|\det A_*\det(A_*^{-1}-D_*)|=|\det(I-D_*A_*)|\\
&=|\det(I-\hat\phi(B_j)_*D_*)|\ \text{ (by (1))}\\
&=\prod_{i=1}^m|1-\mu_i|\le \prod_{i=1}^m(1+|\mu_i|) \ \text{ (by triangle inequality)}\\
&=\prod_{i=1}^m(1+|\lambda_i|)
\end{align*}
Applying the above argument to $D^n$, we obtain that
$$
|\det(I-A_*D_*^n)|\le \prod_{i=1}^m\left(1+|\lambda_i|^n\right).
$$
By the averaging formula, we have
\begin{align*}
N(f^n)&=\frac{1}{|\Phi|}\sum_{A\in\Phi}|\det(I-A_*D_*^n)|\\
&\le\frac{1}{|\Phi|}\sum_{A\in\Phi}\prod_{i=1}^m\left(1+|\lambda_i|^n\right)
=\prod_{i=1}^m\left(1+|\lambda_i|^n\right),
\end{align*}
which induces
\begin{align*}
\limsup\frac{1}{n}\log N(f^n)&\le\sum_{i=1}^m\
\limsup\frac{1}{n}\log\left(1+|\lambda_i|^n\right)\\
&=\sum_{|\lambda|>1} \log|\lambda|=\log\left(\prod_{|\lambda|>1}|\lambda|\right).
\end{align*}
Hence it follows that
\begin{align*}
N^\infty(f)\le\sp(\bigwedge D_*).
\end{align*}

Because the above (algebraic) properties \cite{DRM} and the averaging formula for the Nielsen number \cite{LL-Nagoya} on infra-nilmanifolds can be generalized to infra-solvmanifolds of type $\R$,  we have proven in all that:


\begin{Thm}\label{MP-infranil}
Let $f$ be a continuous map on an infra-solvmanifold of type $\R$ with an affine homotopy lift $(d,D)$. Then we have
$$
N^{\infty}(f) =\sp(\bigwedge D_{*})
$$
provided that $1$ is not in the spectrum of $D_{*}$.
\end{Thm}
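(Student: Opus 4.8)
The plan is to reduce the infra-solvmanifold case to the special solvmanifold case handled in Theorem~\ref{MP-nil}, using the averaging formula for Nielsen numbers on infra-solvmanifolds of type $\R$ (see \cite{LL-Nagoya, HLP11}) together with the algebraic bookkeeping for the holonomy action carried out in \cite[Lemma~3.1]{DRM}. The key point is that both the averaging formula and the combinatorial facts about the induced map $\hat\phi\colon\Phi\to\Phi$ used for infra-nilmanifolds carry over verbatim to infra-solvmanifolds of type $\R$, so the entire computation sketched just above the statement goes through in this generality.

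First I would establish the lower bound $\sp(\bigwedge D_*)\le N^\infty(f)$. Using the averaging formula $N(f^n)=\frac{1}{|\Phi|}\sum_{A\in\Phi}|\det(I-A_*D_*^n)|$, the single term $A=I$ gives $\frac{1}{|\Phi|}|\det(I-D_*^n)|\le N(f^n)$, hence $\limsup\frac1n\log N(f^n)\ge\limsup\frac1n\log|\det(I-D_*^n)|$, and by Corollary~\ref{MP-m} the right-hand side equals $\log\sp(\bigwedge D_*)$ (this is where the hypothesis that $1$ is not in the spectrum of $D_*$ enters, guaranteeing $\det(I-D_*^n)\ne0$ so the growth rate of $L(D_*^n)$ is well-defined and equals $\sp(\bigwedge D_*)$).

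Next I would establish the upper bound $N^\infty(f)\le\sp(\bigwedge D_*)$. For each $A\in\Phi$, form the sequence $B_1=A$, $B_{i+1}=\hat\phi(B_i)$; since $\Phi$ is finite this is eventually periodic, so there are $j,k$ with $B_{j+k}=B_j$, and \cite[Lemma~3.1]{DRM} yields $\det(I-\hat\phi(B_i)_*D_*)=\det(I-\hat\phi(B_{i+1})_*D_*)$ for all $i$ and an $\ell$ with $(\hat\phi(B_j)_*D_*)^\ell=D_*^\ell$. Since $\det A_*=\pm1$ (finite order), $|\det(I-A_*D_*)|=|\det(I-D_*A_*)|=|\det(I-\hat\phi(B_j)_*D_*)|=\prod_i|1-\mu_i|$ where $\mu_i$ are the eigenvalues of $\hat\phi(B_j)_*D_*$; because $\mu_i^\ell=\lambda_i^\ell$ we get $|\mu_i|=|\lambda_i|$, and the triangle inequality gives $|\det(I-A_*D_*)|\le\prod_i(1+|\lambda_i|)$. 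Applying this to $D^n$ and averaging, $N(f^n)\le\prod_i(1+|\lambda_i|^n)$, whence $\limsup\frac1n\log N(f^n)\le\sum_{|\lambda|>1}\log|\lambda|=\log\sp(\bigwedge D_*)$. Combining the two bounds finishes the proof.

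The main obstacle is not really in the analysis — which is essentially identical to the infra-nilmanifold computation reproduced above — but in justifying that the required structural inputs genuinely transfer to infra-solvmanifolds of type $\R$: namely, that $f$ has an affine homotopy lift $(d,D)$ inducing $\phi\colon\Pi\to\Pi$ and a function $\hat\phi\colon\Phi\to\Phi$ with $\hat\phi(A)D=DA$; that the averaging formula for $N(f^n)$ holds (this is \cite{LL-Nagoya, HLP11}); and that the purely group-theoretic Lemma~3.1 of \cite{DRM}, whose proof uses only the finiteness of $\Phi$ and the relation $\hat\phi(A)D=DA$, applies unchanged. Once these are in place, both inequalities follow from Theorem~\ref{MP-nil} and Corollary~\ref{MP-m} as in the infra-nilmanifold case, so I would phrase the argument as "because the algebraic properties of \cite{DRM} and the averaging formula of \cite{LL-Nagoya} on infra-nilmanifolds generalize to infra-solvmanifolds of type $\R$, the same proof yields the result."
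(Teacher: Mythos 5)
Your proposal matches the paper's own argument: the lower bound via the single term $A=I$ in the averaging formula together with Corollary~\ref{MP-m}, the upper bound via the algebraic facts of \cite[Lemma~3.1]{DRM} and the triangle inequality, and the final observation that both the averaging formula and these algebraic properties carry over from infra-nilmanifolds to infra-solvmanifolds of type $\R$. This is essentially the same proof as in the paper, so nothing further is needed.
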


\begin{Rmk}
The above theorem was proved when $M$ is a special solvmanifold of type $\R$, see Theorem~\ref{MP-nil} and the proof of \cite[Theorem~1.5]{mp}. {In the paper \cite{mp}, it is assumed that $\sp(D_*)>1$. Since $N(f)=|\det(I-D_*)|$, $1$ is not in the spectrum of $D_*$ if and only if $f$ is not homotopic to a fixed point free map.}
\end{Rmk}

\bigskip

Now, we provide an example of the asymptotic Nielsen numbers.\newline

\begin{Example}
Let $f:\Pi\bs\bbr^2\to\Pi\bs\bbr^2$ be any continuous map on the Klein bottle $\Pi\bs\bbr^2$ of type $(r,\ell,q)$. Recall from \cite[Theorem~2.3]{KLY} and its proof that $r$ is odd or $q=0$, and
\begin{align*}
N(f^n)&=\begin{cases}
|q^n(1-r^n)|&\text{when $r$ is odd and $q\ne0$;}\\
|1-r^n|&\text{when $q=0$,}
\end{cases}\\
D_*&=\begin{cases}
\left[\begin{matrix}r&0\\0&q\end{matrix}\right]&\text{when $r$ is odd and $q\ne0$;}\\
\left[\begin{matrix}r&0\\2\ell&0\end{matrix}\right]&\text{when $q=0$.}
\end{cases}
\end{align*}

Assume $q=0$. If $|r|\le1$, then $N(f^n)\le2$ and so $N^\infty(f)=1$; if $|r|>1$ then
\begin{align*}
\log\limsup_{n\to\infty} N(f^n)^{1/n}&=\limsup_{n\to\infty}\frac{1}{n}\log|1-r^n|
=\log|r|.
\end{align*}
Thus $N^\infty(f)=\max\{1,|r|\}$.

Assume $q\ne0$ and $r$ is odd. If $r=1$ then $N(f^n)=0\Rightarrow N^\infty(f)=1$. If $r\ne1$ is odd,   then
\begin{align*}
\log\limsup_{n\to\infty} N(f^n)^{1/n}&=\limsup_{n\to\infty}\left(\log|q|+\frac{1}{n}\log|1-r^n|\right)\\
&=\begin{cases}
\log|q|&\text{when $|r|\le1$, i.e., $r=-1$;}\\
\log|qr|&\text{when $|r|>1$.}
\end{cases}
\end{align*}
Thus
\begin{align*}
N^\infty(f)=\begin{cases}
1&\text{$q\ne0$ and $r=1$}\\
\max\{1, |q|, |qr|\}&\text{$q\ne0$ and $r\ne1$ is odd}.
\end{cases}
\end{align*}

On the other hand, since $\sp(\bigwedge D_*)$ is the largest modulus of an eigenvalue of $\bigwedge D_*$, it follows that
$$
\sp(\bigwedge D_*)=\max\{1,|r|,|q|,|qr|\}.
$$
Hence:
\begin{enumerate}
\item If $r=1$, then $N^\infty(f)=1$ and $\sp(\bigwedge D_*)=|q|\ge1$ (since $r=1$ is odd and so $q\ne0$).
\item If $r=0$ (even), then $q$ must be $0$ and so $N^\infty(f)=\sp(\bigwedge D_*)=1$.
\item Otherwise, $N^\infty(f)=\sp(\bigwedge D_*)$.
\end{enumerate}

We observe explicitly in this example that the condition that $1$ is not in the spectrum of $D_*$
induce the identity $N^\infty(f)=\sp(\bigwedge D_*)$. {If $q=0$ then $\sp(D_*)=|r|$ and so $N^\infty(f)=\max\{1,|r|\}=\sp(\bigwedge D_*)$.} If $q\ne0$ then $r$ is odd and $\sp(D_*)=\max\{|r|,|q|\}>1$; if $\sp(D_*)=|r|\ge|q|$ then $|r|>1$ and so $N^\infty(f)=|qr|=\sp(\bigwedge D_*)$; if $\sp(D_*)=|q|\ge|r|$ then $|q|>1$ and $|r|>1$ or $r=-1$ (because $r$ cannot be $1$) so $N^\infty(f)=|qr|=\sp(\bigwedge D_*)$.
\end{Example}

\section{Topological entropy and the radius of convergence}\label{EC}

The most widely used measure for the complexity of a dynamical system is the topological
entropy. For the convenience of the reader, we include its definition.
 Let $ f: X \rightarrow X $ be a self-map of a compact metric space. For given $\epsilon > 0 $
 and $ n \in \bbn  $, a subset $E \subset X$ is said to be $(n,\epsilon)$-separated under $f$ if for
 each pair $x \not= y$ in $E$ there is $0 \leq i <n $ such that $ d(f^i(x), f^i(y)) > \epsilon$.
 Let $s_n(\epsilon,f)$  denote the largest cardinality of any $(n,\epsilon)$-separated subset $E$
 under $f$. Thus  $s_n(\epsilon,f)$ is the greatest number of orbit segments ${x,f(x),\cdots,f^{n-1}(x)}$
 of length $n$ that can be distinguished one from another provided we can only distinguish
 between points of $X$ that are  at least $\epsilon$ apart. Now let
 $$
 h(f,\epsilon):= \limsup_{n} \frac{1}{n}\log \,s_n(\epsilon,f)
 $$
 $$
 h(f):=\limsup_{\epsilon \rightarrow 0} h(f,\epsilon).
 $$
 The number $0\leq h(f) \leq \infty $, which to be independent of the metric $d$ used, is called the topological entropy of $f$.
 If $ h(f,\epsilon)> 0$ then, up to resolution $ \epsilon >0$, the number $s_n(\epsilon,f)$ of
 distinguishable orbit segments of length $n$ grows exponentially with $n$. So $h(f)$
 measures the growth rate in $n$ of the number of orbit segments of length $n$
 with arbitrarily fine resolution.

A basic relation between  topological entropy $h(f)$ and Nielsen numbers  was found by N. Ivanov
\cite{I}. We present here a very short proof by B. Jiang  of the Ivanov's  inequality.

\begin{Lemma}[{\cite{I}}]
\label{Iv}
Let $f$ be a continuous map on a compact connected polyhedron $X$. Then
$$
h(f) \geq {\log N^\infty(f)}
$$
\end{Lemma}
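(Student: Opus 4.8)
The plan is to follow Jiang's argument deriving Ivanov's inequality from the relation between the Nielsen number and the number of periodic orbits detected by essential fixed point classes, combined with a standard lower bound on topological entropy in terms of such orbits. First I would recall that for each $n$ the Nielsen number $N(f^n)$ is a lower bound for the number of fixed points of any map homotopic to $f^n$; more precisely, $N(f^n)$ essential fixed point classes of $f^n$ give rise to at least $N(f^n)$ distinct fixed points of $f^n$, i.e. periodic points of $f$ of period dividing $n$. The key point is to choose, for a fixed small $\epsilon>0$, a sufficiently fine structure so that the periodic points picked out by distinct essential classes of $f^n$ form an $(n,\epsilon)$-separated set, or at least a set whose cardinality grows at the same exponential rate.

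The main step is the separation estimate. Here I would use the fact that $X$ is a compact connected polyhedron with a fixed metric $d$, and that a map in the homotopy class of $f$ can be chosen so that its periodic points are isolated and uniformly separated; alternatively, following Jiang, one uses that for any $\epsilon>0$ there is a $\delta>0$ such that any two points lying in distinct essential fixed point classes of $f^n$ (for the given representative) have some iterate under $f$ at distance $>\epsilon$ — this is where one exploits that fixed point classes of $f^n$ are ``spread out'' along their orbits. Consequently $s_n(\epsilon,f) \ge N(f^n)$ up to a bounded multiplicative error independent of $n$, so that
$$
h(f,\epsilon) = \limsup_n \frac{1}{n}\log s_n(\epsilon,f) \ge \limsup_n \frac{1}{n}\log N(f^n) = \log N^\infty(f).
$$
Letting $\epsilon \to 0$ gives $h(f) \ge \log N^\infty(f)$.

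I expect the main obstacle to be making the $(n,\epsilon)$-separation rigorous: a priori two periodic points in different essential classes of $f^n$ could be very close in $X$ at every iterate if $f$ is badly behaved, so one must either pass to a good representative in the homotopy class (using that $h(f)$ and $N^\infty(f)$ are homotopy invariants, the former by a theorem on entropy of homotopic maps — actually entropy is \emph{not} a homotopy invariant, so instead one argues directly with the given $f$) or invoke Jiang's observation that essential classes remain ``dynamically distinct'' under iteration. The cleanest route, which I would take, is to reproduce B. Jiang's short argument verbatim: fix $\epsilon$, use compactness to get uniform separation of the finitely many essential fixed point classes of each $f^n$ along orbit segments of length $n$, count, and take limits. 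The homotopy invariance of $N^\infty(f)$ then lets one ignore pathologies, while the entropy side is handled purely through the definition via $(n,\epsilon)$-separated sets with no homotopy move needed.
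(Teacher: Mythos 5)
Your overall strategy is the correct one and coincides with the paper's: choose one point from each essential fixed point class of $f^n$, show that the resulting set $E_n$ of $N(f^n)$ points is $(n,\epsilon)$-separated for some $\epsilon>0$ independent of $n$, and conclude $h(f)\ge h(f,\epsilon)\ge\limsup_n\frac{1}{n}\log N(f^n)=\log N^\infty(f)$. But the proposal stops exactly where the proof has to happen, namely at the separation step, and the remedies you sketch for it do not work. Compactness only separates the finitely many fixed point classes of a single iterate $f^n$, at a scale that may shrink as $n\to\infty$; what is needed is one $\epsilon$ valid for all $n$ simultaneously. Passing to a better representative in the homotopy class is not available either since, as you yourself note, topological entropy is not a homotopy invariant. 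Finally, ``invoke Jiang's observation'' or ``reproduce Jiang's short argument verbatim'' is a citation, not an argument, so the key point is left unproved.

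The missing idea is the Nielsen-theoretic chain-of-paths argument, which is essentially the whole content of the paper's proof. Choose $\delta>0$ so that every loop in $X$ of diameter $<2\delta$ is contractible (possible because $X$ is a compact polyhedron), and then, by uniform continuity, $\epsilon>0$ with $d(x,y)<2\epsilon\Rightarrow d(f(x),f(y))<\delta$; both constants depend only on $X$ and $f$, not on $n$. If two distinct points $x,y\in E_n$ were not $(n,\epsilon)$-separated, then $d(f^i(x),f^i(y))\le\epsilon$ for $0\le i<n$, and since $x$ and $y$ are fixed by $f^n$ this closeness persists for all $i\ge0$. Picking paths $c_i$ from $f^i(x)$ to $f^i(y)$ of diameter $<2\epsilon$ with $c_n=c_0$, the choice of $\delta$ and $\epsilon$ forces $f\circ c_i\simeq c_{i+1}$ rel endpoints for every $i$, hence $f^n\circ c_0\simeq c_0$, which by the path characterization of fixed point classes places $x$ and $y$ in the same class of $f^n$ --- contradicting the construction of $E_n$. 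In particular, your stated worry that two representatives of distinct essential classes ``could be very close at every iterate'' is precisely what this argument rules out at the scale $\epsilon$; without it (or an equivalent), your proposal only reduces the inequality to the separation claim rather than proving it.
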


\begin{proof}
Let $\delta$ be such that every loop in $X$ of diameter $ < 2\delta $ is contractible.
Let $\epsilon >0$ be a smaller number such that $d(f(x),f(y)) < \delta $ whenever $ d(x,y)<2\epsilon $. Let $E_n \subset X $ be a set consisting of one point from each essential fixed point class of $f^n$. Thus $|E_n| =N(f^n) $. By the definition of $h(f)$, it suffices to show that $E_n$ is $(n,\epsilon)$-separated. Suppose it is not so. Then there would be two points $x\not=y \in E_n$ such that $ d(f^i(x), f^i(y)) \leq \epsilon$ for $o\leq i< n$ hence for all $i\geq 0$. Pick a path $c_i$ from $f^i(x)$ to $f^i(y)$ of diameter $< 2\epsilon$ for $ 0\leq i< n$ and let $c_n=c_0$. By the choice of $\delta$ and $\epsilon$, $f\circ c_i \simeq c_{i+1} $ for all $i$, so $f^n\circ c_0\simeq c_n=c_0$.
This means $x,y$ in the same fixed point class of $f^n$, contradicting the construction of $E_n$.
\end{proof}

This inequality is remarkable in that it does not require smoothness of the map and provides a common lower bound for the topological entropy of all maps in a homotopy class.

Let $ H^*(f): H^*(M;\bbr) \to H^*(M;\bbr) $ be a linear map induced by $f$ on the total cohomology  $H^*(M;\bbr)$ of $M$ with real coefficients. By $\sp(f)$ we denote the spectral radius of $H^*(f)$, which is a homotopy invariant. In 1974 Michael Shub asked, \cite{Shub}, the extent to which the inequality
$$
h(f) \ge \log(\sp(f))
$$
holds. From this time this inequality has been usually called the Entropy Conjecture. Later A. Katok conjectured \cite{Katok} that Entropy Conjecture holds for all continuous map for $M$ being a manifold with the universal cover homeomorphic to $\bbr^m$. In \cite{{mp-a}}, this was confirmed for every continuous map on an infra-nilmanifold.

\begin{Thm}
\label{AB}
Let $f$ be a continuous map on an infra-solvmanifold $M$ of type $\R$  with an affine homotopy lift $(d,D)$.
If $1$ is not in the spectrum of $D_*$, then
$$
h(f)\ge \log(\sp(f)).
$$
If $\bar{f}$ is the map on $M$ induced by the affine map $(d,D)$, then
\begin{align*}
&h(f)\ge h(\bar{f})\ge\log\sp(f),\\
&h(\bar{f})=\log\sp(\bigwedge D_*)=\log N^\infty(\bar{f})=\log N^\infty(f).
\end{align*}
Hence $\bar{f}$ minimizes the entropy in the homotopy class of $f$.
\end{Thm}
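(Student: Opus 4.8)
The statement packages together several inequalities and one equality, all revolving around the affine map $\bar f$ induced by $(d,D)$. My strategy is to prove the equality chain first and then read off the displayed inequalities. The key inputs are: (i) Theorem~\ref{MP-infranil}, which gives $N^\infty(g)=\sp(\bigwedge D_*)$ for \emph{any} continuous $g$ on $M$ with affine homotopy lift $(d,D)$, provided $1\notin\mathrm{spec}(D_*)$; (ii) Ivanov's inequality, Lemma~\ref{Iv}, giving $h(g)\ge\log N^\infty(g)$ for any continuous $g$; and (iii) the Shub entropy conjecture for infra-nilmanifolds (cited from \cite{mp-a}), extended to infra-solvmanifolds of type $\R$ by the averaging-formula machinery already invoked throughout the paper. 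Since $f$ and $\bar f$ are homotopic, they share the affine homotopy lift $(d,D)$, hence $N^\infty(f)=N^\infty(\bar f)=\sp(\bigwedge D_*)$ by Theorem~\ref{MP-infranil}; this already gives the first two equalities in the last display.

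First I would establish $h(\bar f)=\log\sp(\bigwedge D_*)$. The inequality $h(\bar f)\ge\log N^\infty(\bar f)=\log\sp(\bigwedge D_*)$ is immediate from Lemma~\ref{Iv} and the remark in the previous paragraph. For the reverse inequality I would use that $\bar f$ is induced by an \emph{affine} map on the solvable Lie group $S$, so $\bar f$ lifts (through the finite cover $\Gamma\bs S$) to a map on a special solvmanifold whose entropy can be computed directly; the topological entropy of an affine map on a compact solvmanifold equals the logarithm of the spectral radius of the induced exterior-algebra operator $\bigwedge D_*$ (the volume-growth / Przytycki--type estimate: entropy of a Lie-group endomorphism on a compact nilmanifold or solvmanifold of type $\R$ is $\sum_{|\lambda_j|>1}\log|\lambda_j|=\log\sp(\bigwedge D_*)$). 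Passing to a finite cover does not change topological entropy, so $h(\bar f)=\log\sp(\bigwedge D_*)$. Combined with $\log\sp(\bigwedge D_*)=\log N^\infty(\bar f)=\log N^\infty(f)$ this closes the second displayed line.

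Next the first displayed line. The inequality $h(f)\ge\log\sp(f)$ is exactly the (generalized) entropy conjecture, Theorem~\ref{AB}'s own first assertion — so once that part is proved (via \cite{mp-a} plus the infra-solvmanifold averaging formulas, as already advertised in Remark~\ref{NtoS1} and the surrounding discussion), it applies verbatim to $\bar f$ as well, giving $h(\bar f)\ge\log\sp(f)$; note $\sp(f)=\sp(\bar f)$ since $f\simeq\bar f$ act the same on cohomology. It remains to see $h(f)\ge h(\bar f)$. This follows from Lemma~\ref{Iv} applied to $f$ together with the homotopy invariance of $N^\infty$: $h(f)\ge\log N^\infty(f)=\log N^\infty(\bar f)=h(\bar f)$, the last equality being what we just proved. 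Chaining, $h(f)\ge h(\bar f)\ge\log\sp(f)$, and since $h(\bar f)=\log\sp(\bigwedge D_*)=\log N^\infty(f)$ equals the common lower bound $\log N^\infty(f)\le h(g)$ valid for every $g$ homotopic to $f$, the map $\bar f$ realizes the minimal entropy in its homotopy class.

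**Main obstacle.** The genuinely non-formal step is $h(\bar f)\le\log\sp(\bigwedge D_*)$: one must actually compute (or bound above) the topological entropy of an affine diffeomorphism/endomorphism of a compact solvmanifold of type $\R$ and identify it with $\sum_{|\lambda_j|>1}\log|\lambda_j|$. For nilmanifolds this is classical; for solvmanifolds of type $\R$ one wants the exponential growth rate of $D^n$-translated fundamental domains, which is governed by the eigenvalues of $D_*$ of modulus $>1$ because $S$ is of type $\R$ (all eigenvalues of $\mathrm{Ad}$ are real, no rotation contributes extra growth). I would either cite the relevant computation for affine maps on infra-solvmanifolds of type $\R$ or reduce, via the finite cover $\Gamma\bs S\to M$ and an explicit volume-distortion estimate, to the known nilmanifold/solvmanifold statement; everything else is bookkeeping with Lemma~\ref{Iv}, Theorem~\ref{MP-infranil}, and homotopy invariance.
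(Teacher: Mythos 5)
Your proposal is correct and follows essentially the same route as the paper: Ivanov's inequality (Lemma~\ref{Iv}) plus Theorem~\ref{MP-infranil} and homotopy invariance give the lower bound $h(\bar f)\ge\log\sp(\bigwedge D_*)=\log N^\infty(f)$, the reverse inequality is obtained by passing to the finite solvmanifold cover and using the Przytycki-type $C^1$ estimate for the affine map (exactly the paper's ``fundamental'' step), and the comparison $\log\sp(f)\le\log\sp(\bigwedge D_*)$ comes from the cited results of \cite{mp,mp-a} together with the covering argument, after which the chain $\log\sp(f)\le h(\bar f)=\log N^\infty(f)\le h(f)$ closes as in the paper. The only cosmetic difference is that you organize the argument around proving $h(\bar f)=\log\sp(\bigwedge D_*)$ first and then quoting the entropy conjecture for $\bar f$, whereas the paper runs the single chain of (in)equalities directly; the ingredients are identical.
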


\begin{proof}
Let $\bar{f}$ be the map on $M$ induced by the affine map $(d,D)$. Thus $f$ is homotopic to $\bar{f}$. By \cite[Lemma~2.1]{LL-Nagoya}, there is a special solvmanifold which regularly and finitely covers the infra-solvmanifold $M$ so that $f$ can be lifted to $\hat{f}$ on the solvmanifold. We also remark that the Lie group homomorphism $\tau_dD$ induces a map $\bar{\hat{f}}$ on the solvmanifold so that $\bar{f}$ lifts to $\bar{\hat{f}}$, $\hat{f}$ is homotopic to $\phi_f$, the linearization $D_*$ of $f$ is also a linearization of the lift $\hat{f}$, and the topological entropies of $f,\bar{f}$ and their lifts $\hat{f}, \bar{\hat{f}}$ are the same, i.e., $h(f)=h(\hat{f})$ and $h(\bar{f})=h(\bar{\hat{f}})$. Moreover, since the spectral radius is a homotopy invariant, $\sp(f)=\sp(\bar{f})$ and $\sp(\hat{f})=\sp(\bar{\hat{f}})$. It is also known that $\sp(f)\le\sp(\hat{f})$. See, for example, \cite[Proposition~2]{mp-a}.

Now observe that
\begin{align*}
\log\sp(\bigwedge D_*)&=\log N^\infty(f)\ \text{ (Theorem~\ref{MP-infranil})}\\
&=\log N^\infty(\bar{f})\ \text{ (homotopy invariance of $N^\infty(\cdot)$)}\\
&\le h(\bar{f})\ \text{ (Lemma~\ref{Iv})}\\
&=h(\bar{\hat{f}})\ \text{ (lift under a finite regular cover)}\\
&\le\log\sp(\bigwedge D_*).
\end{align*}
{The fundamental is the last inequality. It follows from the estimate of topological entropy of a $C^1$ self-map of a compact manifold $M$
$$
h(f)\le\limsup_{n\to\infty}\frac{1}{n}\log\sup_{x\in M}||\bigwedge Df(x)||
$$
given in \cite{pz} (see \cite{mp} for another reference on this estimate). Next we observe that for an affine map $f=(d,D)$ the latter reduces to $||\bigwedge D||=||\bigwedge D_*||=\sp(\bigwedge D_*)$.}

This implies that
$$
\log\sp(\bigwedge D_*)=\log N^\infty(f)=\log N^\infty(\bar{f})=h(\bar{f}).
$$
Furthermore,
\begin{align*}
\log\sp(\bigwedge D_*)&\ge \log\sp(\bar{\hat{f}})\ \text{ (\cite[Theorem~2.1]{mp})}\\
&=\log\sp(\hat{f})\ \text{ (homotopy invariance of $\sp(\cdot)$)}\\
&\ge\log\sp(f)\ \text{ (lift under a finite regular cover)}.
\end{align*}
Thus we have
\begin{align*}
\log\sp(f) \le \log\sp(\bigwedge D_*)&=\log N^\infty(f)=\log N^\infty(\bar{f})=h(\bar{f})
\\&\le h(f).
\end{align*}
The last inequality follows from Ivanov's inequality, Lemma~\ref{Iv}.
\end{proof}

\begin{Rmk}
If $\sp(D_*)\le1$, then $\sp(\bigwedge D_*)=1$ and so we have the obvious inequality
$$
h(f)\ge0=\log\sp(\bigwedge D_*)\ge\log\sp(f).
$$
\end{Rmk}

\begin{Rmk}
The first inequality $h(f)\ge\log\sp(f)$ in Theorem~\ref{AB}, i.e. Entropy Conjecture, also follows from \cite[Proposition~4.2 and Theorem~1.5]{mp} and by taking a regular finite covering to a special solvmanifold of type $\R$. The second inequality in Theorem~\ref{AB} generalizes the same results \cite[Theorem~4.13]{mp} and \cite[Theorem~B]{mp-a} on nilmanifolds and infra-nilmanifolds.
\end{Rmk}

We denote by $R$ the radius of convergence of the zeta functions $N_f(z)$ or $R_f(z)$.

\begin{Thm}\label{RC}
Let $f$ be a continuous map on an infra-nilmanifold with an affine homotopy lift $(d,D)$. Then the Nielsen zeta function $N_f(z)$ and the Reidemeister zeta function $R_f(z)$, whenever it is defined, have the same positive radius of convergence $R$ which admits following estimation
\begin{equation*}
 R \geq \exp(-h)>0,
\end{equation*}
where $h=\inf \{h(g)\mid g\simeq f\}$.

If $1$ is not in the spectrum of $D_{*}$, the radius $R$ of convergence of $R_f(z)$ is
$$
R=\frac{1}{N^{\infty}(f)}=\frac{1}{\exp h(\bar{f})}
=\frac{1}{\sp(\bigwedge D_{*})}.
$$
\end{Thm}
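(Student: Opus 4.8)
The plan is to reduce everything to the elementary fact that exponentiating a power series with non-negative coefficients does not change its radius of convergence, and then to feed in the asymptotic results of Sections~\ref{Asymptotic} and~\ref{EC}. Whenever $R_f(z)$ is defined we have $R_f(z)=R_\phi(z)=N_f(z)$ by Proposition~\ref{RZ}, so it suffices to determine the radius of convergence of $N_f(z)=\exp\!\big(\sum_{n\ge1}\tfrac{N(f^n)}{n}z^n\big)$. Let $\rho$ be the radius of convergence of the inner series, and write $N_f(z)=\sum_k c_k z^k$. Since $N(f^n)\ge0$, each $c_k$ is a polynomial in the $N(f^n)$ with non-negative coefficients; hence for $0<r<\rho$ one has $\sum_k c_k r^k=\exp\!\big(\sum_{n\ge1}\tfrac{N(f^n)}{n}r^n\big)<\infty$ (rearrangement of a non-negative series), so $N_f$ converges on $|z|<\rho$, while conversely $c_n\ge N(f^n)/n$ for $n\ge1$ forces the radius of $N_f$ to be $\le\rho$. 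Thus $R=\rho$, and by Cauchy--Hadamard together with $n^{1/n}\to1$,
$$
R=\frac{1}{\limsup_{n\to\infty} N(f^n)^{1/n}}.
$$

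For the general estimate, let $g\simeq f$. Ivanov's inequality (Lemma~\ref{Iv}) and the homotopy invariance of the asymptotic Nielsen number give $h(g)\ge\log N^\infty(g)=\log N^\infty(f)$; taking the infimum over $g\simeq f$ yields $h\ge\log N^\infty(f)$, i.e.\ $\exp(-h)\le 1/N^\infty(f)$. Since $N^\infty(f)=\max\{1,\limsup_n N(f^n)^{1/n}\}\ge\limsup_n N(f^n)^{1/n}$, the displayed formula gives $1/N^\infty(f)\le R$, so $R\ge\exp(-h)$. Moreover $\bar f$ is a smooth self-map of a compact manifold, hence $h(\bar f)<\infty$; therefore $h\le h(\bar f)<\infty$ and $\exp(-h)>0$, and in particular $R>0$.

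Now suppose $1\notin\mathrm{spec}(D_*)$. By Theorem~\ref{MP-infranil}, $N^\infty(f)=\sp(\bigwedge D_*)$, hence $\limsup_n N(f^n)^{1/n}\le\sp(\bigwedge D_*)$. For the reverse inequality, the averaging formula of Theorem~\ref{AV-all} applied to $f^n$ (which has affine homotopy lift $(\ast,D^n)$) gives $N(f^n)\ge\frac{1}{|\Phi|}|\det(I-D_*^n)|$, and the computation in the proof of Theorem~\ref{MP-nil} (see also Corollary~\ref{MP-m}) shows that, because $1$ is not an eigenvalue of $D_*$, $\limsup_n|\det(I-D_*^n)|^{1/n}=\sp(\bigwedge D_*)$ (equal to $\prod_{|\lambda|>1}|\lambda|$ if $\sp(D_*)>1$, and to $1$ if $\sp(D_*)\le1$). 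Hence $\limsup_n N(f^n)^{1/n}=\sp(\bigwedge D_*)$, and the displayed formula gives $R=1/\sp(\bigwedge D_*)=1/N^\infty(f)$. Finally Theorem~\ref{AB} gives $h(\bar f)=\log\sp(\bigwedge D_*)=\log N^\infty(f)$, so $R=1/\exp h(\bar f)$ as well, which completes the proof.

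I expect no deep new obstacle: the substance---the identities $N^\infty(f)=\sp(\bigwedge D_*)$ and $h(\bar f)=\log\sp(\bigwedge D_*)$---is already provided by Theorems~\ref{MP-infranil} and~\ref{AB}. The two points that genuinely need care are (i) the passage $\sum\tfrac{N(f^n)}{n}z^n\mapsto\exp(\cdot)$ must be shown to preserve the radius of convergence, which uses $N(f^n)\ge0$ essentially; and (ii) in the general estimate only the inequality $R\ge\exp(-h)$ can be asserted, because $\limsup_n N(f^n)^{1/n}$ can be strictly smaller than $1$ while $N^\infty(f)=1$, whereas the hypothesis $1\notin\mathrm{spec}(D_*)$ forces this growth rate to be at least $1$ (via the lower bound $N(f^n)\ge\frac{1}{|\Phi|}|\det(I-D_*^n)|$, which is a positive integer---so $\ge1$---for the infinitely many $n$ for which $D_*^n$ has no eigenvalue equal to $1$), and that is exactly what pins down the value of $R$.
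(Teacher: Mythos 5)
Your proposal is correct and follows essentially the same route as the paper's own proof: Cauchy--Hadamard giving $1/R=\limsup_n N(f^n)^{1/n}$, Ivanov's inequality plus homotopy invariance of $N^\infty$ for the estimate $R\ge\exp(-h)>0$ (with finiteness of the entropy of a smooth representative), and Theorems~\ref{MP-infranil} and~\ref{AB} for the identities when $1$ is not in the spectrum of $D_*$. Your two added justifications---that exponentiating a series with non-negative coefficients preserves the radius of convergence, and that the hypothesis on $D_*$ forces $\limsup_n N(f^n)^{1/n}\ge1$ via integrality of the Nielsen numbers---merely make explicit points the paper leaves implicit (the paper obtains the latter from $R(f^n)=N(f^n)\ge1$ when $R_f(z)$ is defined), so they are refinements rather than a different approach.
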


\begin{proof}
When $R_f(z)$ is defined, as it was observed before, $R(f^n)<\infty$ and so $R(f^n)=N(f^n)>0$ for all $n>0$ on infra-nilmanifolds. In particular, $R_f(z)=N_f(z)$. By the Cauchy-Hadamard formula,
$$
\frac{1}{R}=\limsup_{n\to\infty}\left(\frac{N(f^n)}{n}\right)^{1/n}
=\limsup_{n\to\infty}N(f^n)^{1/n}.
$$
Since $N(f^n)\ge1$ for all $n>0$, it follows that $\limsup_{n\to\infty}N(f^n)^{1/n}\ge1$. Thus
$$
\frac{1}{R}= N^\infty(f)\le\exp h(f).
$$
This induces the inequality $R\ge\exp(-h)$ by the homotopy invariance of the radius $R$ of the Reidemeister zeta function $R_f(z)$. We consider  a smooth map $g:M\rightarrow M$ which is homotopic to $f$. As it is known in \cite{pz}, the entropy $h(g)$ is finite. Thus $\exp(-h) \geq \exp(-h(g)) > 0$. Now the identities in our theorem follow from Theorem~\ref{AB}.

Consider next the Nielsen zeta function $N_f(z)$. If $\limsup_{n\to\infty}N(f^n)^{1/n}\ge1$, then we obtain the same inequality for $R$ as for $R_f(z)$. Thus, we assume $\limsup_{n\to\infty}N(f^n)^{1/n}<1$. This happens only when $N(f^n)=0$ for all but finitely many $n$. In this case, $1/R=\limsup_{n\to\infty}N(f^n)^{1/n}=0$ and so $R=\infty$ and $N^\infty(f)=1$.
\end{proof}

\section{Zeta functions and the Reidemeister torsion of the mapping torus}\label{Tor}

 The Reidemeister torsion
is a graded version of the absolute value of the determinant
of an isomorphism of vector spaces.

Let $d^i:C^i\rightarrow C^{i+1}$ be a cochain complex $C^*$
of finite dimensional vector spaces over $\bbc$ with
$C^i=0$ for $i<0$ and large $i$.
If the cohomology $H^i=0$ for all $i$ we say that
$C^*$ is {\it acyclic}.
If one is given positive densities $\Delta_i$ on $C^i$
then the Reidemeister torsion $\tau(C^*,\Delta_i)\in(0,\infty)$
for acyclic $C^*$ is defined as follows:

\begin{Def}
 Consider a chain contraction $\delta^i:C^i\rightarrow C^{i-1}$,
 i.e., a linear map such that $d\circ\delta + \delta\circ d = \id$.
 Then $d+\delta$ determines a map
 $ (d+\delta)_+ : C^+:=\oplus C^{2i} \rightarrow C^- :=\oplus C^{2i+1}$
 and a map $ (d+\delta)_- : C^- \rightarrow C^+ $.
Since the map $(d+\delta)^2 = id + \delta^2$ is unipotent,
$(d+\delta)_+$ must be an isomorphism.
One defines $\tau(C^*,\Delta_i):= |\det(d+\delta)_+|$.
\end{Def}

Reidemeister torsion is defined in the following geometric setting.
Suppose $K$ is a finite complex and $E$ is a flat, finite dimensional,
complex vector bundle with base $K$.
We recall that a flat vector bundle over $K$ is essentially the
same thing as a representation of $\pi_1(K)$ when $K$ is
connected.
If $p\in K$ is a base point then one may move the fibre at $p$
in a locally constant way around a loop in $K$. This
defines an action of $\pi_1(K)$ on the fibre $E_p$ of $E$
above $p$. We call this action the holonomy representation
$\rho:\pi\to GL(E_p)$.

Conversely, given a representation $\rho:\pi\to GL(V)$
of $\pi$ on a finite dimensional complex vector space $V$,
one may define a bundle $E=E_\rho=(\tilde{K}\times V) / \pi$.
Here $\tilde{K}$ is the universal cover of $K$, and
$\pi$ acts on $\tilde{K}$ by covering transformations and on $V$
by $\rho$.
The holonomy of $E_\rho$ is $\rho$, so the two constructions
give an equivalence of flat bundles and representations of $\pi$.

If $K$ is not connected then it is simpler to work with
flat bundles. One then defines the holonomy as a
representation of the direct sum of $\pi_1$ of the
components of $K$. In this way, the equivalence of
flat bundles and representations is recovered.

Suppose now that one has on each fibre of $E$ a positive density
which is locally constant on $K$.
In terms of $\rho_E$ this assumption just means
$|\det\rho_E|=1$.
Let $V$ denote the fibre of $E$.
Then the cochain complex $C^i(K;E)$ with coefficients in $E$
can be identified with the direct sum of copies
of $V$ associated to each $i$-cell $\sigma$ of $K$.
 The identification is achieved by choosing a basepoint in each
component of $K$ and a basepoint from each $i$-cell.
By choosing a flat density on $E$ we obtain a
preferred density $\Delta_
i$ on $C^i(K,E)$. A case of particular interest is when  $E$ is an acyclic bundle, meaning that the twisted cohomology of $E$ is zero ($H^i(K;E)=0$). In this case one defines the R-torsion of $(K,E)$ to be $\tau(K;E)=\tau(C^*(K;E),\Delta_i)\in(0,\infty)$. It does not depend on the choice of flat density on $E$.

The Reidemeister torsion of an acyclic bundle $E$ on $K$ has many nice properties.
 Suppose that $A$ and $B$ are subcomplexes of $K$. Then we have a multiplicative law:
 \begin{equation}\label{viet}
 \tau(A\cup B;E)\cdot \tau(A\cap B;E) =\tau(A;E)\cdot \tau(B;E)
 \end{equation}
that is interpreted as follows. If three of the bundles $E| A\cup B, \> E |A\cap B, \> E|A,\> E|B$
are acyclic then so is the fourth and the equation (\ref{viet}) holds.

    Another property is the simple homotopy invariance of the Reidemeister torsion.
 In particular $\tau$ is invariant under
subdivision. This implies that for a smooth manifold, one can unambiguously define $\tau(K;E)$ to be the torsion of any smooth triangulation of $K$.

In the case $K= S^1$ is a circle, let $A$ be the holonomy of a generator of the fundamental group
$\pi_1(S^1)$. One has that $E$ is acyclic if and only if $I-A$ is invertible and then
 \begin{equation*}
\tau(S^1;E)= |\det(I-A)|
 \end{equation*}
Note that the choice of generator is irrelevant as $I-A^{-1}= (-A^{-1})(I-A) $ and $|\det(-A^{-1})|=1$.

  These three  properties of the Reidemeister torsion are the analogues of the properties of Euler
  characteristic (cardinality law, homotopy invariance and normalization on a point), but there are
  differences. Since a point has no acyclic representations ($H^0\not=0$) one cannot normalize
  $\tau$  on a point as we do for the Euler characteristic, and so one must use $S^1$ instead.
  The multiplicative cardinality law for the Reidemeister torsion can be made additive just by using $\log\tau$, so the difference here is inessential. More important for some purposes is that
the Reidemeister torsion is not an invariant under a general homotopy equivalence: as mentioned earlier this is in fact why it was first invented.

It might be expected that the Reidemeister torsion counts something geometric (like the Euler characteristic).
D. Fried  \cite{Fri2} showed that it counts the periodic orbits of a flow and the
periodic points of a map.
 We will show that the Reidemeister torsion counts the periodic point classes of a map (fixed point classes of the iterations of the map).

Some further properties of $\tau$ describe its behavior under bundles.
Let  $p: X\rightarrow B$ be a simplicial bundle with fiber $F$ where $F, B, X$ are finite
complexes and $p^{-1}$ sends subcomplexes of $B$ to subcomplexes of $X$
over the circle $S^1$.
We assume here that $E$ is a flat, complex vector bundle over $B$ . We form its pullback $p^*E$
over $X$.
Note that the vector spaces $H^i(p^{-1}(b),\bbc)$ with
$b\in B$ form a flat vector bundle over $B$,
which we denote $H^i F$. The integral lattice in
$H^i(p^{-1}(b),\bbr)$ determines a flat density by the condition
that the covolume of the lattice is $1$.
We suppose that the bundle $E\otimes H^i F$ is acyclic for all
$i$. Under these conditions D. Fried \cite{Fri2} has shown that the bundle
$p^* E$ is acyclic, and
\begin{equation*}
 \tau(X;p^* E) = \prod_i \tau(B;E\otimes H^i F)^{(-1)^i}.
\end{equation*}

Let $f:X\rightarrow X$ be a homeomorphism of
a compact polyhedron $X$.
Let $T_f := (X\times I)/(x,0)\sim(f(x),1)$ be the
mapping torus of $f$.

We shall consider the bundle $p:T_f\rightarrow S^1$
over the circle $S^1$.
We assume here that $E$ is a flat, complex vector bundle with
finite dimensional fibre and base $S^1$. We form its pullback $p^*E$
over $T_f$.
 Note that the vector spaces $H^i(p^{-1}(b),\bbc)$ with
$b\in S^1$ form a flat vector bundle over $S^1$,
which we denote $H^i F$. The integral lattice in
$H^i(p^{-1}(b),\bbr)$ determines a flat density by
the condition
that the covolume of the lattice is $1$.
We suppose that the bundle $E\otimes H^i F$ is acyclic for all
$i$. Under these conditions D. Fried  \cite{Fri2} has shown that the bundle
$p^* E$ is acyclic, and we have
\begin{equation}\label{ReidTor}
 \tau(T_f;p^* E) = \prod_i
 \tau(S^1;E\otimes H^i F)^{(-1)^i}.
\end{equation}
Let $g$ be the preferred generator of the group
$\pi_1 (S^1)$ and let $A=\rho(g)$ where
$\rho:\pi_1 (S^1)\rightarrow GL(V)$.
Then the holonomy around $g$ of the bundle $E\otimes H^i F$
is $A\otimes (f^*)^i$.
Since $\tau(S^1;E)=|\det(I-A)|$ it follows from (\ref{ReidTor})
that
\begin{equation*}
 \tau(T_f;p^* E) = \prod_i \mid\det(I-A\otimes (f^*)^i)\mid^{(-1)^i}.
\end{equation*}
We now consider the special case in which $E$ is one-dimensional,
so $A$ is just a complex scalar $\lambda$ of modulus one.
 Then in terms of the rational function $L_f(z)$ we have :
\begin{equation}\label{ReidTorLef}
 \tau(T_f;p^* E) = \prod_i \mid\det(I-\lambda (f^*)^i)\mid^{(-1)^i}
             = \mid L_f(\lambda)\mid^{-1}
\end{equation}

This means that  the special value of the Lefschetz zeta function is given by the Reidemeister torsion of the corresponding  mapping torus. Let us consider an infra-nilmanifold $M=\Pi\bs{G}$ and a continuous map $f$ on $M$.
As in Section~1, we consider the subgroup $\Pi_+$  of $\Pi$ of index at most $2$. Then $\Pi_+$ is also an almost Bieberbach group as $\Pi$ itself and the corresponding infra-nilmanifold $M_+=\Pi_+\bs{G}$ is a double covering of the infra-nilmanifold $M=\Pi\bs{G}$; the map $f$ lifts to a map $f_+:M_+\to M_+$ which has the same affine homotopy lift $(d,D)$ as $f$. Let $T_f$ and  $T_{f_{+}}$ be the mapping torus  of $f$ and $f_{+}$ correspondingly.
We shall consider two bundles $p:T_f\rightarrow S^1$ and $p_{+}:T_{f_{+}}\rightarrow S^1$ over the circle $S^1$.
We assume here that $E$ is a flat,  complex vector bundle with one dimensional fibre and base $S^1$. We form its pullback $p^*E$ over $T_f$ and pullback $p_{+}^*E$ over $T_{f_{+}}$. We suppose that the bundles $E\otimes H^i M$ and $E\otimes H^i M_{+}$ are acyclic for all $i$.
Then  Theorem~\ref{T4.5} and the formula (\ref{ReidTorLef}) imply
the following result about special values of the Reidemeister and Nielsen zeta functions

\begin{Thm} \label{tor}
Let $f$ be a homeomorphism on an infra-nilmanifold $\Pi\bs{G}$ with an affine homotopy lift $(d,D)$. Then
\begin{align*}
&|R_f((-1)^n\lambda)^{(-1)^{p+n}}| = |R_\phi((-1)^n\lambda)^{(-1)^{p+n}}|
=|N_f((-1)^n\lambda)^{(-1)^{p+n}}|\\
&=\begin{cases}
|L_f(\lambda)|=\tau(T_f;p^* E)^{-1}&\text{when $\Pi=\Pi_+$;}\\
|L_{f_+}(\lambda)L_f(\lambda)^{-1}| =\tau(T_f; p^* E)\tau(T_{f_{+}}; p_{+}^* E)^{-1} &\text{when $\Pi\ne\Pi_+$,}
\end{cases}
\end{align*}
where  $p$ is the number of real eigenvalues of $D_*$ which are $>1$ and $n$ is the number of real eigenvalues of $D_*$ which are $<-1$.
\end{Thm}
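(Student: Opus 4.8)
The plan is to obtain the claimed chain of identities by combining Theorem~\ref{T4.5} with the torsion formula \eqref{ReidTorLef}; the only work involved is a substitution and some sign bookkeeping, so I do not expect a genuine obstacle here.

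First I would recall from Theorem~\ref{T4.5} that, under the standing assumption that $R_f(z)$ is defined, one has $R_f(z)=R_\phi(z)=N_f(z)$, and that this common rational function equals $L_f((-1)^nz)^{(-1)^{p+n}}$ when $\Pi=\Pi_+$ and $\bigl(L_{f_+}((-1)^nz)L_f((-1)^nz)^{-1}\bigr)^{(-1)^{p+n}}$ when $\Pi\ne\Pi_+$. Then I would substitute $z=(-1)^n\lambda$. Since $(-1)^{2n}=1$, the argument $(-1)^nz$ collapses to $\lambda$, and since $\bigl((-1)^{p+n}\bigr)^2=1$, raising the resulting identity to the power $(-1)^{p+n}$ cancels the outer exponent. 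Taking absolute values, this immediately gives
\begin{align*}
\bigl|R_f((-1)^n\lambda)^{(-1)^{p+n}}\bigr|=\bigl|R_\phi((-1)^n\lambda)^{(-1)^{p+n}}\bigr|&=\bigl|N_f((-1)^n\lambda)^{(-1)^{p+n}}\bigr|\\
&=\begin{cases}|L_f(\lambda)|&\text{if }\Pi=\Pi_+,\\ \bigl|L_{f_+}(\lambda)L_f(\lambda)^{-1}\bigr|&\text{if }\Pi\ne\Pi_+.\end{cases}
\end{align*}

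Next I would invoke \eqref{ReidTorLef}. Since $E$ is a flat complex line bundle over $S^1$ with $|\det\rho_E|=1$, its holonomy $\lambda$ has modulus one, and the hypothesis that $E\otimes H^iM$ (respectively $E\otimes H^iM_+$) is acyclic for all $i$ is exactly what Fried's theorem requires in order to conclude that $p^*E$ (respectively $p_+^*E$) is acyclic and that
$$
\tau(T_f;p^*E)=|L_f(\lambda)|^{-1}\quad\text{and}\quad\tau(T_{f_+};p_+^*E)=|L_{f_+}(\lambda)|^{-1}.
$$
In particular $L_f(\lambda)$ and $L_{f_+}(\lambda)$ are finite and nonzero, so all of the expressions above make sense. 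Combining this with the previous display yields $|L_f(\lambda)|=\tau(T_f;p^*E)^{-1}$ when $\Pi=\Pi_+$, and when $\Pi\ne\Pi_+$,
$$
\bigl|L_{f_+}(\lambda)L_f(\lambda)^{-1}\bigr|=\tau(T_{f_+};p_+^*E)^{-1}\,\tau(T_f;p^*E)=\tau(T_f;p^*E)\,\tau(T_{f_+};p_+^*E)^{-1},
$$
which is the assertion.

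I do not anticipate a real difficulty: the statement is essentially a bookkeeping consequence of Theorem~\ref{T4.5} and \eqref{ReidTorLef}. The points that require attention are (i) the two sign cancellations $(-1)^n\cdot(-1)^n=1$ in the argument of $L_f$ and $\bigl((-1)^{p+n}\bigr)^2=1$ in the exponent, and (ii) verifying that the acyclicity hypotheses on $E\otimes H^iM$ and $E\otimes H^iM_+$ are precisely what is needed to make the Lefschetz zeta values at $\lambda$, and hence the two Reidemeister torsions, well-defined and finite. One should also keep in mind that $f$ being a homeomorphism is what makes $T_f$ and $T_{f_+}$ honest fiber bundles over $S^1$, so that Fried's formula \eqref{ReidTor} applies.
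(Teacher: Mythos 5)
Your proposal is correct and follows essentially the same route as the paper: the paper likewise obtains the theorem by substituting $z=(-1)^n\lambda$ into Theorem~\ref{T4.5} (using $R_f(z)=R_\phi(z)=N_f(z)$ when $R_f$ is defined) and then invoking Fried's torsion formula \eqref{ReidTorLef} for $T_f$ and $T_{f_+}$ under the stated acyclicity hypotheses. Your sign bookkeeping and the verification that the acyclicity of $E\otimes H^iM$ and $E\otimes H^iM_+$ is what makes the torsions and the special values well-defined match the paper's (very brief) argument.
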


\section{Jiang-type spaces and averaging formula for the Reidemeister numbers on infra-solvmanifolds of type $\R$}\label{jiang}

A closed manifold $M$ is called a \emph{Jiang-type space} if for all continuous maps $f:M\to M$,
\begin{align*}
L(f)=0&\Rightarrow N(f)=0;\\
L(f)\ne0&\Rightarrow N(f)=R(f).
\end{align*}
A closed orientable manifold $M$ is called a \emph{Jiang-type space for coincidences} (\cite{GW01}) if for any continuous maps $f,g:N\to M$ where $N$ is any closed orientable manifold of equal dimension,
\begin{align*}
L(f,g)=0&\Rightarrow N(f,g)=0;\\
L(f,g)\ne0&\Rightarrow N(f,g)=R(f,g).
\end{align*}
It is well-known that Jiang spaces are of Jiang-type for coincidences. When $N=M$ is a nilmanifold and $\phi,\psi$ are homomorphisms on the group of covering transformations induced by self-maps $f,g$ on $N$, it is proven in \cite[Theorem~2.3]{Gon} that
\begin{align*}
N(f,g)>0 \Leftrightarrow
\coin(\phi,\psi)=1 \Leftrightarrow
R(f,g)<\infty
\end{align*}
Further if one of the above holds then
$$
R(f,g)=N(f,g)=|L(f,g)|.
$$
Furthermore, nilmanifolds are Jiang-type spaces for coincidences, see \cite{GW01}. Recall that if $N$ is a finite connected complex and $M$ is a nilmanifold then $N(f,g)\ne0\Rightarrow R(f,g)<\infty$; if both $N$ and $M$ are nilmanifolds of equal dimension, then two conditions are equivalent and in that case we have $N(f,g)=R(f,g)$.
\bigskip

Recall what C. McCord proved in \cite[Sec.\!~2]{mccord}.
Let $S_i$ be simply connected solvable Lie groups of type $\E$ with equal dimension, and let $\Gamma_i$ be lattices of $S_i$. Let $D_i:S_1\to S_2$ be Lie group homomorphisms such that $D_i(\Gamma_1)\subset\Gamma_2$. Write $\phi_i=D_i|_{\Gamma_1}:\Gamma_1\to\Gamma_2$. Thus $D_i$ induce maps $f_i$ between orbit spaces $M_i=\Gamma_i\bs{S_i}$, special solvmanifolds of type $\E$. {When $S_i$ are of type $\R$, we can always assume that any $f_i$ is induced from a Lie group homomorphism $D_i$,} {see \cite[Theorem~2.2]{LL-Nagoya} or \cite[Theorem~4.2]{HL-a}.}

Denote $C_\gamma:=\coin(\gamma\circ D_1,D_2)$ and $\bbs_\gamma=p_1\left(\coin(\gamma\circ D_1,D_2)\right)$ for each $\gamma\in\Gamma_2$. We also consider the map $D:S_1\to S_2$ defined by $D(s)=D_1(s)^{-1}D_2(s)$ for $s\in S_1$.

\begin{Lemma}[{\cite[Lemmas~2.6 and ~2.7, and Theorem~2.1]{mccord}}]\label{mcc}
The following are equivalent:
\begin{enumerate}
\item[$(1)$] $\coin(\phi_1,\phi_2)=1$.
\item[$(2)$] $\dim(C_1)=0$.
\item[$(3)$] $D$ is injective.
\item[$(4)$] $C_1=\bbs_1$.
\item[$(5)$] $\ind(\bbs_1)=\pm1$.
\item[$(6)$] $\ind(\bbs_1)\ne0$.
\end{enumerate}
These statements are also valid for any other coincidence class $\bbs_\gamma$, and all $\ind(\bbs_\gamma)$ have the same sign. Hence $N(f_1,f_2)=|L(f_1,f_2)|$.
\end{Lemma}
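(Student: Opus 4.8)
The plan is to carry out McCord's argument from \cite{mccord}. Since a simply connected solvable Lie group of type $\R$ is in particular of type $\E$, and since by \cite[Theorem~2.2]{LL-Nagoya} (or \cite[Theorem~4.2]{HL-a}) every self-map of a special solvmanifold of type $\R$ may be assumed induced by a Lie group homomorphism, nothing beyond this reduction needs adapting, and one works throughout in the type $\E$ setting.

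First I would analyze the coincidence data upstairs. For $\gamma\in\Gamma_2$ the set $C_\gamma=\coin(\gamma\circ D_1,D_2)\subset S_1$, when nonempty, is a left coset $s_0C_1$ of $C_1=\coin(D_1,D_2)=\{s\in S_1\mid D_1(s)=D_2(s)\}$; being the coincidence set of the two homomorphisms $D_1,D_2$, this $C_1$ is a closed subgroup of $S_1$ with $\mathrm{Lie}(C_1)=\ker(D_{1*}-D_{2*})\subseteq\frakS_1$, and $C_1=D^{-1}(e)$ for the smooth difference map $D\colon S_1\to S_2$, $D(s)=D_1(s)^{-1}D_2(s)$. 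Because $S_1,S_2$ are diffeomorphic to Euclidean spaces and the $\Gamma_i$ are lattices, $D$ is proper; hence each $\bbs_\gamma=p_1(C_\gamma)$ is a single coincidence class of $(f_1,f_2)$, all of the same dimension $\dim C_1$, and $L(f_1,f_2)=\sum\ind(\bbs_\gamma)$, the sum running over the coincidence classes.

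The heart of the matter is the equivalence of (2) and (3). From $\mathrm{Lie}(C_1)=\ker(D_{1*}-D_{2*})$ and $\dim S_1=\dim S_2$ one gets at once that $\dim C_1=0$ iff $D_{1*}-D_{2*}$ is an isomorphism; and $D$ injective trivially forces $C_1=D^{-1}(e)=\{e\}$, hence $\dim C_1=0$. The substantive implication is the converse: $\dim C_1=0$ (equivalently $D_{1*}-D_{2*}$ invertible) forces $D$ to be \emph{globally} injective. I would prove this by induction on $\dim S_1$: pick a codimension-one normal subgroup of $S_1$, apply the inductive hypothesis to the homomorphisms induced on the quotient, and then control the remaining one-parameter direction, where the fact that $\exp$ is a diffeomorphism (type $\E$) makes the relevant exponential-polynomial equation have at most one real solution. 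I expect this inductive injectivity step to be the main obstacle, as it is exactly the place where the solvable structure of a type $\E$ group --- and not merely the contractibility of $S_i$ --- is used.

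Finally I would read off the remaining statements. The equivalences (1), (4), (5), (6) are then routine: $C_1=\bbs_1$ and $\coin(\phi_1,\phi_2)=1$ both say $\Gamma_1\cap C_1=\{e\}$, which by rationality of $\ker(D_{1*}-D_{2*})$ relative to the lattice $\Gamma_1$ is equivalent to $\dim C_1=0$; and if $D$ is injective, each $C_\gamma=\bbs_\gamma$ is a single point at which $D$ is a local homeomorphism of equidimensional manifolds, so $\ind(\bbs_\gamma)=\pm1$, while McCord's orientation analysis (using that the deck transformations of $S_1\to M_1$ are left translations, hence orientation preserving) shows these signs all agree. In that case every coincidence class is essential and $N(f_1,f_2)=(\text{number of coincidence classes})=\bigl|\sum\ind(\bbs_\gamma)\bigr|=|L(f_1,f_2)|$; and if $D$ is not injective, then $\dim C_1>0$, every $\bbs_\gamma$ is positive-dimensional hence inessential, so $N(f_1,f_2)=0$, while $L(f_1,f_2)=0$ as well, the coincidence Reidemeister number is infinite, and $\ind(\bbs_1)=0$. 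Either way the six statements stand or fall together and $N(f_1,f_2)=|L(f_1,f_2)|$, as claimed.
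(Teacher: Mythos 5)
Your reconstruction is being measured against a proof the paper never gives: Lemma~\ref{mcc} is quoted verbatim from McCord \cite{mccord} (his Lemmas~2.6, 2.7 and Theorem~2.1), so what matters is whether your sketch would actually substitute for McCord's argument. The routine bookkeeping you do is fine: $C_1$ is the equalizer subgroup with Lie algebra $\ker(D_{1*}-D_{2*})$, each nonempty $C_\gamma$ is a left coset $s_0C_1$, and the sets $p_1(C_\gamma)$ are exactly the Nielsen coincidence classes. But note that the last fact is plain covering-space theory and your appeal to properness of $D$ is both unnecessary and false: when $\dim C_1>0$ the fibre $D^{-1}(e)=C_1$ is a noncompact closed subgroup, so $D$ is not proper.

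The genuine gaps sit exactly at the steps you flag as the heart of the matter. (i) For $(2)\Rightarrow(3)$ you propose induction over an arbitrary codimension-one normal subgroup of $S_1$; but $D$ is not a homomorphism, and a random normal subgroup of $S_1$ is not invariant under $D_1,D_2$, nor does it come with a compatible normal subgroup of $S_2$, so no induced maps on quotients exist and the induction cannot start. The workable reduction — the one this paper itself uses in the proof of Theorem~\ref{Jiang-type} — goes through characteristic data such as $N=[S,S]$ (or the nilradical), which both $D_1$ and $D_2$ respect, giving the block-triangular form of $D_{2*}-D_{1*}$ and reducing to a torus quotient and a nilpotent fibre where Mal'cev theory is available. (ii) The implication $(6)\Rightarrow(2)$, i.e.\ your claim that a positive-dimensional class is automatically inessential (hence $\ind(\bbs_1)=0$, $N(f_1,f_2)=L(f_1,f_2)=0$), is asserted, not proved; a positive-dimensional coincidence set has no a priori reason to have vanishing index, and this is precisely the nontrivial content of McCord's Lemma~2.7/Theorem~2.1. (iii) Your route $(1)\Leftrightarrow(2)$ invokes ``rationality of $\ker(D_{1*}-D_{2*})$ relative to $\Gamma_1$''; for nilpotent groups this is Mal'cev rationality, but for solvable Lie groups of type $\E$ a lattice does not provide such a rational structure for free, so this too needs an argument (or the reduction in (i)). In short, the skeleton matches McCord's strategy, but the three load-bearing steps are missing or would fail as formulated.
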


We generalize \cite[Theorem~2.3]{Gon} from nilmanifolds to special solvmanifolds of type $\R$.

\begin{Thm} \label{Jiang-type}
Let $f_1$ and $f_2$ be maps on a special solvmanifold $\Gamma\bs{S}$ of type $\R$. Let $\phi_1,\phi_2:\Gamma\to\Gamma$ be homomorphisms induced by $f_1,f_2$ respectively. Then the following are equivalent:
\begin{enumerate}
\item[$(\mathrm{a})$] $N(f_1,f_2)>0$.
\item[$(\mathrm{b})$] $\coin(\phi_1,\phi_2)=1$.
\item[$(\mathrm{c})$] $R(f_1,f_2)<\infty$.
\end{enumerate}
Further if one of the above holds then
$$
R(f_1,f_2)=N(f_1,f_2)=|L(f_1,f_2)|.
$$
\end{Thm}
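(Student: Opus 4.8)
The plan is to reduce the statement to C.~McCord's analysis, packaged above as Lemma~\ref{mcc}, together with the linearization of self-maps on special solvmanifolds of type~$\R$. Since $S$ is of type~$\R$, by \cite[Theorem~2.2]{LL-Nagoya} we may assume $f_1,f_2$ are induced by Lie group homomorphisms $D_1,D_2:S\to S$ with $D_i(\Gamma)\subseteq\Gamma$ and $\phi_i=D_i|_\Gamma$, and we set $D(s)=D_1(s)^{-1}D_2(s)$. Lemma~\ref{mcc} then applies verbatim: $\coin(\phi_1,\phi_2)=1$ is equivalent to $D$ being injective, to $\dim C_1=0$, and to $\ind(\bbs_\gamma)\ne0$ for one --- hence, by the ``valid for any $\bbs_\gamma$'' clause, for every --- coincidence class; and in that case all the $\ind(\bbs_\gamma)$ share a common sign and $N(f_1,f_2)=|L(f_1,f_2)|$.

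The equivalence $(\mathrm{a})\Leftrightarrow(\mathrm{b})$ and the equality $N(f_1,f_2)=|L(f_1,f_2)|$ under them are then essentially formal. If $(\mathrm{b})$ holds, Lemma~\ref{mcc} makes $\bbs_1$ an essential (in particular nonempty) coincidence class, so $N(f_1,f_2)\ge1$, i.e.\ $(\mathrm{a})$. Conversely, $N(f_1,f_2)>0$ yields some coincidence class $\bbs_\gamma$ with $\ind(\bbs_\gamma)\ne0$, and by Lemma~\ref{mcc} this forces $\coin(\phi_1,\phi_2)=1$. When $(\mathrm{b})$ holds, each nonempty coincidence class contributes an index $\pm1$ of a fixed sign, so $|L(f_1,f_2)|$ counts the essential classes, i.e.\ $N(f_1,f_2)=|L(f_1,f_2)|$ --- the concluding line of Lemma~\ref{mcc}.

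For $(\mathrm{b})\Leftrightarrow(\mathrm{c})$ and $R(f_1,f_2)=N(f_1,f_2)$: since $\Gamma\bs S$ is aspherical, $R(f_1,f_2)=R(\phi_1,\phi_2)$, the number of $(\phi_1,\phi_2)$-twisted conjugacy classes of $\Gamma$, which also enumerates all coincidence classes of $(f_1,f_2)$, empty or not; hence $R(f_1,f_2)\ge N(f_1,f_2)$ always. If $(\mathrm{b})$ holds, then $\dim C_1=0$, so $\Coin(f_1,f_2)$ is a compact $0$-dimensional set and therefore finite. It remains to show, on the one hand, that when $\coin(\phi_1,\phi_2)=1$ no Reidemeister coincidence class is empty --- so that $R(f_1,f_2)$ equals the number of essential classes, giving $R(f_1,f_2)=N(f_1,f_2)=|L(f_1,f_2)|<\infty$ --- and, on the other, that $\coin(\phi_1,\phi_2)\ne1$ forces $R(f_1,f_2)=\infty$. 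For the latter I would argue contrapositively via the polycyclic structure of the torsion-free group $\Gamma$: a nontrivial $\coin(\phi_1,\phi_2)$ is an infinite subgroup, and along a $\phi_1,\phi_2$-compatible filtration of $\Gamma$ with infinite cyclic quotients, finiteness of $R(\phi_1,\phi_2)$ would force $\phi_2-\phi_1$ to be injective on some quotient $\bbz$, contradicting the presence there of (the image of) a nontrivial coincidence element.

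The most efficient route, and the one I would actually carry out, is to obtain the whole of $(\mathrm{b})\Leftrightarrow(\mathrm{c})$ together with $R=N=|L|$ from the coincidence linearization on special solvmanifolds of type~$\R$ available through \cite{HL-a}, in the spirit of the fixed-point averaging formula of Theorem~\ref{AV-all}: namely $L(f_1,f_2)=\pm\det(D_{1*}-D_{2*})$, $N(f_1,f_2)=|\det(D_{1*}-D_{2*})|$, and $R(f_1,f_2)=\sigma(\det(D_{1*}-D_{2*}))$ with $\sigma(0)=\infty$, after which $R(f_1,f_2)<\infty\Leftrightarrow\det(D_{1*}-D_{2*})\ne0\Leftrightarrow D\text{ injective}\Leftrightarrow(\mathrm{b})$ and the final equalities are immediate. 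The main obstacle is precisely this Reidemeister-number statement --- both the absence of empty Reidemeister coincidence classes when $\coin(\phi_1,\phi_2)=1$ and the infinitude of $R$ otherwise --- since this is where the hypothesis ``type~$\R$'' is genuinely needed (it fails for general solvmanifolds, while for nilmanifolds it is classical, being \cite[Theorem~2.3]{Gon}); by contrast $(\mathrm{a})\Leftrightarrow(\mathrm{b})$ and $N=|L|$ come for free from Lemma~\ref{mcc}.
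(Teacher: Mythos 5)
Your handling of $(\mathrm{a})\Leftrightarrow(\mathrm{b})$ and of $N(f_1,f_2)=|L(f_1,f_2)|$ via Lemma~\ref{mcc} agrees with the paper. The gap is exactly where you locate it, and neither of your two routes closes it. The linearization route is circular: the formula $R(f_1,f_2)=\sigma(\det(D_{2*}-D_{1*}))$ on special solvmanifolds of type $\R$ is not available in \cite{HL-a}, which supplies only the Lefschetz and Nielsen formulas; in this paper the Reidemeister formula is Theorem~\ref{R-coin}, and its proof invokes the very statement you are proving (it uses $R(\bar\alpha\bar f,\bar g)=\sigma(N(\bar\alpha\bar f,\bar g))$ on the covering special solvmanifold), so it cannot be quoted here. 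The filtration route is also not viable as sketched: a filtration of $\Gamma$ by subgroups invariant under \emph{both} $\phi_1$ and $\phi_2$ with infinite cyclic quotients need not exist, and even granting one, finiteness of $R(\phi_1,\phi_2)$ only passes to quotients (the map $\calR(\phi_1,\phi_2)\to\calR(\bar\phi_1,\bar\phi_2)$ is onto); it does not force injectivity of the ``difference'' on the subquotient carrying the image of a nontrivial coincidence element, so the intended contradiction does not materialize. You also leave unproved the companion claim that $\coin(\phi_1,\phi_2)=1$ forces every Reidemeister class to be nonempty and essential, which is what $R=N$ requires beyond the general inequality $N\le R$.

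The paper closes both gaps with a structure your proposal does not use: the Mostow fibration $N\cap\Gamma\bs{N}\to\Gamma\bs{S}\to\Gamma\cdot N\bs{S}$ determined by $N=[S,S]$, whose fiber group $\Gamma'=N\cap\Gamma$ is invariant under both $\phi_i$, together with the block triangular form of $D_{2*}-D_{1*}$ adapted to it. Assuming $(\mathrm{b})$, nonsingularity of $D_{2*}-D_{1*}$ gives $\coin(\bar\phi_1,\bar\phi_2)=\bar1$ on the torus base and $\coin(\phi'_1,\phi'_2)=1$ on the nilmanifold fiber, so $R(\bar\phi_1,\bar\phi_2)<\infty$ and, by \cite[Theorem~2.3]{Gon}, $R(\phi'_1,\phi'_2)<\infty$; the exact sequence $\calR(\phi'_1,\phi'_2)\to\calR(\phi_1,\phi_2)\to\calR(\bar\phi_1,\bar\phi_2)\to1$ then gives $(\mathrm{c})$. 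Assuming $(\mathrm{c})$, finiteness descends to the base and is forced on the fiber by the same determinant block computation, and then $N(f_1,f_2)=|\det(D_{2*}-D_{1*})|=N(\bar f_1,\bar f_2)N(f'_1,f'_2)=R(\bar\phi_1,\bar\phi_2)R(\phi'_1,\phi'_2)\ge R(\phi_1,\phi_2)\ge N(f_1,f_2)$, which yields $(\mathrm{a})$ and $R=N=|L|$ simultaneously, without ever having to show directly that no Reidemeister class is empty. Some such reduction to the fiber--base pair (nilmanifold case of \cite{Gon} plus the torus case) is the missing ingredient in your proposal.
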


\begin{proof}
By Lemma~\ref{mcc}, $(\mathrm{a})\Leftrightarrow (\mathrm{b})$. Now we will show $(\mathrm{b})\Rightarrow (\mathrm{c})$, and $(\mathrm{c})\Rightarrow (\mathrm{a})$ together with the identity $R(f_1,f_2)=N(f_1,f_2)$.

Let $S$ be a simply connected solvable Lie group of type $\R$. Let $N=[S,S]$ and $\Lambda=S/N$. Then $N$ is nilpotent and $\Lambda\cong\bbr^{k}$ for some $k>0$. A lattice $\Gamma$ of $S$ yields a lattice $N\cap\Gamma$ of $N$. Moreover, the lattice $\Gamma$ induces a short exact sequence $1\to N\cap\Gamma\to\Gamma\to\Gamma/N\cap\Gamma\cong\Gamma\cdot N/N\to1$ so that the following diagram is commutative
$$
\CD
1@>>>N@>>>S@>>>\Lambda=S/N@>>>0\\
@.@AAA@AAA@AAA\\
1@>>> N\cap\Gamma@>>>\Gamma@>>>\Gamma\cdot N/N@>>>0
\endCD
$$
This gives rise to the fibration, called a \emph{Mostow fibration},
$$
N\cap\Gamma\bs{N}\lra M=\Gamma\bs{S}\lra \Gamma\cdot N\bs{S}
$$
over a torus base $\Gamma\cdot{N}\bs{S}$ with compact nilmanifold fiber $N\cap\Gamma\bs{N}$. It is known that this fibration is orientable if and only if the solvmanifold $M$ is a nilmanifold.

Let $E:S\to S$ be a homomorphism. Then $E$ induces a homomorphism $E':N\to N$ and hence a homomorphism $\bar{E}:\Lambda\to\Lambda$ so that the following diagram is commutative
$$
\CD
1@>>>N@>>>S@>>>\Lambda@>>>0\\
@.@VV{E'}V@VV{E}V@VV{\bar{E}}V\\
1@>>>N@>>>S@>>>\Lambda@>>>0
\endCD
$$
Hence we have the following diagram is commutative
$$
\CD
1@>>> N\cap\Gamma@>>>\Gamma@>>>\Gamma\cdot N/N@>>>0\\
@.@VV{\phi'_i}V@VV{\phi_i}V@VV{\bar\phi_i}V\\
1@>>> N\cap\Gamma@>>>\Gamma@>>>\Gamma\cdot N/N@>>>0
\endCD
$$
Denote $\Gamma'=N\cap\Gamma$ and let $\bar\Gamma=\Gamma\cdot N/N$.

By \cite[Theorem~2.2]{LL-Nagoya} or \cite[Theorem~4.2]{HL-a}, we may assume that $f_1,f_2$ are induced by Lie group homomorphisms $D_1,D_2:S\to S$ respectively. Then
$$
\varphi_i(\gamma)\circ D_i=D_i\circ\gamma\ \ \forall\gamma\in\Gamma.
$$
Evaluating at the identity of $S$, we obtain that $\phi_i(\gamma)=D_i(\gamma)$ for all $\gamma\in\Gamma$. So, $\phi_i$ is the restriction of $D_i$ on $\Gamma$.

Assume (b): $\coin(\phi_1,\phi_2)=1$.
Then $\coin(D_1,D_2)=1$ by Lemma~\ref{mcc}. By taking differential, we see that $\coin({D_1}_*, {D_2}_*)=0$, or ${D_2}_*-{D_1}_*$ is a linear isomorphism. We can write ${D_2}_*-{D_1}_*$ as
$$
{D_2}_*-{D_1}_*=\left[\begin{matrix}\bar{D}_{2_*}-\bar{D}_{1_*}&0\\{*}&{D'_2}_*-{D'_1}_*
\end{matrix}\right]
$$
with respect to some linear basis of the Lie algebra of $S$. This implies that $\bar{D}_{2_*}-\bar{D}_{1_*}$ is an isomorphism and so $\coin(\bar{D}_{2_*},\bar{D}_{1_*})=0$ or $\coin(\bar{D}_1,\bar{D}_2)=\bar{1}=\coin(\bar\varphi_1,\bar\varphi_2)$. This happens on $\Lambda\cong\bbr^k$ with the lattice $\Gamma'$ and so on the torus $\Gamma\cdot{N}\bs{S}=\Gamma'\bs\Lambda$. Hence $\coin(\bar\phi_1,\bar\phi_2)=\bar{1}$ implies $R(\bar\phi_1, \bar\phi_2)<\infty$.

On the other hand, since $\coin(\phi'_1, \phi'_2)=1$ from $\coin(\phi_1, \phi_2)=1$, by \cite[Theorem~2.3]{Gon}, $R(\phi_1', \phi_2')<\infty$. Now the above commutative diagram induces a short exact sequence of the sets of Reidemeister classes
$$
\calR(\phi'_1,\phi'_2)\lra \calR(\phi_1,\phi_2)\lra \calR(\bar\phi_1,\bar\phi_2)\lra1.
$$
Because both sets $\calR(\phi'_1,\phi'_2)$ and $\calR(\bar\phi_1,\bar\phi_2)$ are finite, it follows that the middle set $\calR(\phi_1,\phi_2)$ is also finite. Hence $R(\phi_1,\phi_2)<\infty$.

Assume (c): $R(\phi_1,\phi_2)<\infty$. Then $R(\bar\phi_1,\bar\phi_2)<\infty$ on the torus $\Gamma'\bs\Lambda$. We already know that this implies $0<N(\bar{f}_1, \bar{f}_2)=R(\bar\phi_1, \bar\phi_2)$ and $\coin(\bar\phi_1,\bar\phi_2)=\bar{1}$. Assume that $R(\phi'_1,\phi'_2)=\infty$. By \cite[Theorem~2.3]{Gon}, $\coin(\phi'_1,\phi'_2)\ne1$ and then by Lemma~\ref{mcc}, $\coin(D_1',D_2')\ne1$ and hence $D'_{2_*}-D'_{1_*}$ is singular, which implies $D_{2_*}-D_{1_*}$ is also singular and so contradicts $\coin(\phi_1,\phi_2)=1$. Hence $R(\phi'_1,\phi'_2)<\infty$ on the nilmanifold $\Gamma'\bs{N}$. This implies that $0<N(f_1',f_2')=R(\phi'_1, \phi'_2)$. Hence we have
\begin{align*}
N(f_1,f_2)&=|L(f_1,f_2)|\ (\text{\cite[Theorem~2.1]{mccord}})\\
&=|\det(D_{2_*}-D_{1_*})|\ (\text{\cite[Theorem~3.1]{HLP11}})\\
&=|\det(\bar{D}_{2_*}-\bar{D}_{1_*})||\det(D'_{2_*}-D'_{1_*})|\\
&=N(\bar{f}_1,\bar{f}_2)N(f'_1,f'_2)=R(\bar\phi_1,\bar\phi_2)R(\phi'_1,\phi'_2)\\
&\ge R(\phi_1,\phi_2)\ (\text{exactness and finiteness of each Reidemeister set}).
\end{align*}
Consequently, sine it is always true that $N(f_1,f_2)\le R(\phi_1,\phi_2)$, we have the identity $N(f_1,f_2)= R(\phi_1,\phi_2)$.
\end{proof}

Immediately, from Theorem~\ref{Jiang-type} we obtain the following: for any maps $f_1,f_2:M\to M$ on a special solvmanifold $M$ of type $\R$, we have
\begin{align*}
L(f_1,f_2)=0&\Rightarrow N(f_1,f_2)=0;\\
L(f_1,f_2)\ne0&\Rightarrow N(f_1,f_2)=R(f_1,f_2).
\end{align*}

\begin{Example}
Consider the closed $3$-manifolds with $\Sol$-geometry. We refer to \cite[Sec.\!~6]{HL-a} for details about the Reidemeister numbers on these manifolds. These are infra-solvmanifolds $\Pi\bs\Sol$ of type $\R$. When $\Pi=\Pi_0$ or $\Pi_2^\pm$, the corresponding manifold is a torus bundle over $S^1$, and when $\Pi=\Pi_3$ or $\Pi_6$, the manifold is a sapphire space. Only $\Pi_0\bs\Sol$ is the special solvmanifold and the remaining manifolds are non-special, infra-solvmanifolds. For any \emph{homeomorphism} $f:\Pi\bs\Sol\to\Pi\bs\Sol$, let $F_*$ be its linearization. Then the following can be found in \cite[Sec.\!~6]{HL-a}:
\begin{enumerate}
\item When $\Pi=\Pi_0$ or $\Pi_2^+$, $L(f)=N(f)=R(f)=4$ only when $F_*$ is of type (II) with $\det F_*=-1$; otherwise, $L(f)=N(f)=0$ and $R(f)=\infty$.
\item When $\Pi=\Pi_2^-$, $F_*$ is always of type (I) and $L(f)=N(f)=0$, but $R(f)=\infty$.
\item When $\Pi=\Pi_3$, $L(f)=N(f)=0$, but $R(f)=\infty$.
\item When $\Pi=\Pi_6$, $L(f)=N(f)$, which is $0$ or $2$ according as $\det F_*=1$ or $-1$, but $R(f)=\infty$.
\end{enumerate}
These results show that Theorem~\ref{Jiang-type} (i.e., $N(f)>0\Leftrightarrow R(f)<\infty$; in this case, $N(f)=R(f)$) is true for the special solvmanifold $\Pi_0\bs\Sol$ and infra-solvmanifolds $\Pi_2^\pm\bs\Sol$ and $\Pi_3\bs\Sol$, but not true anymore for the infra-solvmanifold $\Pi_6\bs\Sol$.
\end{Example}

Now we can state a practical formula for the Reidemeister number of a pair of continuous maps on an infra-solvmanifold of type $\R$. This is a straightforward generalization of \cite[Theorem~6.11]{HLP} and its proof from infra-nilmanifolds.

\begin{Thm}\label{R-coin}
Let $M=\Pi\bs{S}$ be an infra-solvmanifold of type $\R$ with holonomy group $\Phi$. Let $f,g:M\to M$ be continuous maps with affine homotopy lifts $(d,D), (e,E)$ respectively. Then
$$
R(f,g)=\frac{1}{|\Phi|}\sum_{A\in\Phi}\sigma\left(\det(E_*-A_*D_*)\right),
$$
where $A_*, D_*$ and $E_*$ induced by $A,D$ and $E$ are expressed with respect to a preferred basis of $\Pi\cap S$ and where $\sigma:\bbr\to\bbr\cup\{\infty\}$ is given by $\sigma(0)=\infty$ and $\sigma(x)=|x|$ for all $x\ne0$.
\end{Thm}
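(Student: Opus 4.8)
The plan is to follow the strategy of \cite[Theorem~6.11]{HLP}, replacing the infra-nilmanifold input there by the results on infra-solvmanifolds of type $\R$ proved above, in particular Theorem~\ref{Jiang-type} and Lemma~\ref{mcc}. By homotopy invariance of the Reidemeister coincidence number I may assume $f,g$ are induced by the affine maps $(d,D),(e,E)$, so that $R(f,g)=R(\phi,\psi)$, the Reidemeister number of the induced pair of homomorphisms $\phi,\psi\colon\Pi\to\Pi$. Put $\Gamma=\Pi\cap S$ and let $\pi\colon\bar M=\Gamma\bs S\to M$ be the associated regular finite cover with deck group $\Phi$; then $\phi,\psi$ map $\Gamma$ into itself and induce $\hat\phi,\hat\psi\colon\Phi\to\Phi$ with $\hat\phi(A)_*D_*=D_*A_*$ and $\hat\psi(A)_*E_*=E_*A_*$ for all $A\in\Phi$, as in \eqref{*}.

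I would realize $R(\phi,\psi)$ as the number of orbits of the twisted $\Pi$-action $\delta\cdot\alpha=\phi(\delta)\,\alpha\,\psi(\delta)^{-1}$ on $\Pi$. Decomposing $\Pi=\bigsqcup_{A\in\Phi}\Gamma\tilde A$ over chosen lifts $\tilde A=(\tilde a,A)\in\Pi$, the $\Gamma$-subaction preserves each coset $\Gamma\tilde A$, while the induced $\Phi$-action on the set of $\Gamma$-orbits permutes the blocks by $A\mapsto\hat\phi(B)A\hat\psi(B)^{-1}$; thus the set of $\Pi$-orbits fibers over the Reidemeister coincidence classes of $(\hat\phi,\hat\psi)$ on the finite group $\Phi$. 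The fiber over a class $[A]$ is obtained from the $\Gamma$-orbits inside $\Gamma\tilde A$ by dividing out the residual action of the stabilizer $\mathrm{Stab}_A\le\Phi$. Identifying $\Gamma\tilde A$ with $\Gamma$, those $\Gamma$-orbits are the Reidemeister coincidence classes of a pair of homomorphisms of $\Gamma$ which, on the special solvmanifold $\bar M=\Gamma\bs S$, are induced — up to the harmless conjugacy ambiguity already present in Theorem~\ref{AV-all} — by Lie group homomorphisms with linearizations $A_*D_*$ and $E_*$. Hence, by Theorem~\ref{Jiang-type} together with \cite[Theorem~3.1]{HLP11}, the number of these $\Gamma$-orbits equals $\sigma\!\left(\det(E_*-A_*D_*)\right)$: it is finite and equal to $|\det(E_*-A_*D_*)|$ when this determinant is nonzero, and infinite (with $\coin\ne1$) otherwise.

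If $\det(E_*-A_0{}_*D_*)=0$ for some $A_0\in\Phi$, the corresponding fiber already contains infinitely many $\Gamma$-orbits, and the residual $\mathrm{Stab}_{A_0}$-action on it has orbits of size at most $|\Phi|$, so $R(\phi,\psi)=\infty$, matching the right-hand side. Otherwise all these counts are finite, and I would sum the fiber sizes over the classes $[A]\in\mathcal R(\hat\phi,\hat\psi)$. An orbit–stabilizer computation — each class $[A]$ has $|\Phi|/|\mathrm{Stab}_A|$ representatives in $\Phi$, and passing to the $\mathrm{Stab}_A$-orbits on its fiber replaces the count $\sigma(\det(E_*-A_*D_*))$ by the average of the counts over the block — collapses $\sum_{[A]}$ into $\frac1{|\Phi|}\sum_{A\in\Phi}$, yielding
\[
R(f,g)=R(\phi,\psi)=\frac1{|\Phi|}\sum_{A\in\Phi}\sigma\!\left(\det(E_*-A_*D_*)\right).
\]

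The technical heart — and the step I expect to be hardest — is this last bookkeeping: correctly identifying the residual $\mathrm{Stab}_A$-action on each fiber, checking that the associated pair of homomorphisms of $\Gamma$ really has linearizations $A_*D_*$ and $E_*$ up to a conjugacy that does not affect $\sigma(\det(E_*-A_*D_*))$ (this is the coincidence analogue of the equality $\frac1{|\Phi|}\sum_A\sigma(\det(A_*-F_*))=\frac1{|\Phi|}\sum_A\sigma(\det(A_*-D_*))$ in Theorem~\ref{AV-all}), and verifying that the orbit–stabilizer weights combine to the normalization $\tfrac1{|\Phi|}$. Everything else is a direct transcription of the proof of \cite[Theorem~6.11]{HLP}, with Theorem~\ref{Jiang-type} in place of the nilmanifold coincidence theorem, just as the averaging formulas of Section~\ref{Coin-S} and Theorem~\ref{AV-all} are generalized from infra-nilmanifolds to infra-solvmanifolds of type $\R$.
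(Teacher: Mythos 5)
Your route is genuinely different from the paper's: you try to prove the formula directly by decomposing the twisted conjugacy classes of $(\phi,\psi)$ on $\Pi$ over the cosets $\Gamma\tilde A$, whereas the paper passes to a fully invariant finite-index subgroup $\Lambda\subset\Gamma=\Pi\cap S$ (\cite[Lemma~2.1]{LL-Nagoya}), invokes the already-established averaging formula for Reidemeister numbers over the finite cover, $R(f,g)=\frac{1}{[\Pi:\Lambda]}\sum_{\bar\alpha\in\Pi/\Lambda}R(\bar\alpha\bar f,\bar g)$ from \cite[Corollary~1.3]{HL}, converts each term to $\sigma\!\left(N(\bar\alpha\bar f,\bar g)\right)$ by Theorem~\ref{Jiang-type}, evaluates the Nielsen numbers by \cite[Theorem~3.1]{HLP11} and Lemma~\ref{mcc}, and finally rescales $\det_\Lambda$ to $\det_\Gamma$. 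Measured against that, your sketch has two genuine gaps. First, you assume $\phi(\Gamma)\subset\Gamma$ and $\psi(\Gamma)\subset\Gamma$, which is what makes the twisted $\Gamma$-action preserve each coset $\Gamma\tilde A$ and makes the fibers Reidemeister sets of homomorphisms of $\Gamma$. This is not automatic: $D$ and $E$ are only endomorphisms of $S$, and the relation $\hat\phi(A)D=DA$ at $A=1$ gives $\hat\phi(1)D=D$, which forces $\hat\phi(1)=1$ only when $D$ is surjective. Avoiding exactly this failure is why the paper (following \cite{HLP,LL-Nagoya}) replaces $\Gamma$ by a fully invariant subgroup $\Lambda$ of finite index, at the cost of the normalization $[\Gamma:\Lambda]\det_\Lambda(E_*-A_*D_*)=\det_\Gamma(E_*-A_*D_*)$, an issue your version never confronts.

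Second, the step you yourself call the technical heart is asserted rather than proved: passing from ``number of $\mathrm{Stab}_A$-orbits of $\Gamma$-classes in each fiber'' to the uniform average $\frac{1}{|\Phi|}\sum_{A\in\Phi}\sigma\!\left(\det(E_*-A_*D_*)\right)$. Replacing an orbit count by an average of set sizes is not a valid counting principle in general (orbit counting needs Burnside-type fixed-point bookkeeping, and distinct $\Gamma$-classes within a block can merge under the residual action), and making this bookkeeping work is precisely the content of the averaging formula for Reidemeister coincidence numbers that the paper cites as a black box from \cite[Corollary~1.3]{HL}. So unless you either quote that formula or supply its proof, your argument does not close; with it, your remaining steps (Theorem~\ref{Jiang-type} plus \cite[Theorem~3.1]{HLP11} and Lemma~\ref{mcc} to evaluate each block, and the treatment of the infinite case) do agree in substance with the paper's.
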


\begin{proof}
Choose a fully invariant subgroup $\Lambda\subset\Gamma:=\Pi\cap S$ of $\Pi$ with finite index (\cite[Lemma~2.1]{LL-Nagoya}). Then $f,g$ lift to maps $\bar{f},\bar{g}$ on the special solvmanifold $\Lambda\bs{S}$ of type $\R$ and by \cite[Corollary~1.3]{HL} we have
$$
R(f,g)=\frac{1}{[\Pi:\Lambda]}\sum_{\bar\alpha\in\Pi/\Lambda}R(\bar\alpha\bar{f},\bar{g}).
$$
By Theorem~\ref{Jiang-type}, $R(\bar\alpha\bar{f},\bar{g})=\sigma\!\left(N(\bar\alpha\bar{f},\bar{g})\right)$ for all $\bar\alpha\in\Pi/\Lambda$.

On the other hand, we may assume that $f,g$ are induced by the affine maps $(d,D), (e,E)$ respectively. This induces that $\bar{f}, \bar{g}$ are induced by the Lie group homomorphisms $\mu(d)\circ D, \mu(e)\circ E:S\to S$, where $\mu(\cdot)$ is conjugation. If $(a,A)\in\Pi$ is a preimage of $\bar\alpha\in\Pi/\Lambda$, then the transformation $\bar\alpha$ on $\Lambda\bs{S}$ is induced by the Lie group automorphism $\mu(a)\circ A$. By \cite[Theorem~3.1]{HLP11} and Lemma~\ref{mcc}, we have that
$$
N(\bar\alpha\bar{f},\bar{g})=|\det(\Ad(e)E_*-\Ad(a)A_*\Ad(d)D_*)|
=|\det_\Lambda(E_*-A_*D_*)|
$$
with respect to any preferred basis of $\Lambda$. If we regard this as a basis of $\Gamma$, then we can see that
$$
[\Gamma:\Lambda]\det_\Lambda(E_*-A_*D_*)=\det_\Gamma(E_*-A_*D_*),
$$
for example see the proof of \cite[Theorem~6.11]{HLP}. Hence
\begin{align*}
R(f,g)&=\frac{1}{[\Pi:\Lambda]}\sum_{\bar\alpha\in\Pi/\Lambda}\sigma\!\left(N(\bar\alpha\bar{f},\bar{g})\right)\\
&=\frac{1}{[\Pi:\Lambda]}\sum_{A\in\Phi}[\Gamma:\Lambda]\ \sigma\!\left(\det_\Lambda(E_*-A_*D_*)\right)\\
&=\frac{1}{|\Phi|}\sum_{A\in\Phi}\sigma\left(\det_\Gamma(E_*-A_*D_*)\right).\qedhere
\end{align*}
\end{proof}

{The following corollaries generalize \cite[Theorems~5.1 and 5.2]{DP11} from infra-nilmanifolds to infra-solvmanifolds of type $\R$.}

\begin{Cor}
Let $M=\Pi\bs{S}$ be an orientable infra-solvmanifold of type $\R$.  Let $f,g:M\to M$ be continuous maps. If $R(f,g)<\infty$, then $R(f,g)=N(f,g)$.
\end{Cor}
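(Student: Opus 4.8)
The plan is to read both $R(f,g)$ and $N(f,g)$ off the same averaging expression and to use the finiteness hypothesis to eliminate the only place where the two can disagree, namely the value $\infty$ in the function $\sigma$. First I would apply Theorem~\ref{R-coin}: fixing affine homotopy lifts $(d,D),(e,E):S\to S$ of $f,g$, it gives
$$
R(f,g)=\frac{1}{|\Phi|}\sum_{A\in\Phi}\sigma\!\left(\det(E_*-A_*D_*)\right),
$$
with $\sigma(0)=\infty$ and $\sigma(x)=|x|$ for $x\ne0$. Since the right-hand side is a finite sum of terms in $[0,\infty]$ and $R(f,g)<\infty$ by hypothesis, every summand must be finite; hence $\det(E_*-A_*D_*)\ne0$ for each $A\in\Phi$, and therefore $\sigma\!\left(\det(E_*-A_*D_*)\right)=|\det(E_*-A_*D_*)|$ for all $A$, so that
$$
R(f,g)=\frac{1}{|\Phi|}\sum_{A\in\Phi}|\det(E_*-A_*D_*)|.
$$

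Next I would invoke the averaging formula for the Nielsen coincidence number on an orientable infra-solvmanifold of type $\R$, the coincidence analogue of the fixed-point formula in Theorem~\ref{AV-all}. This is obtained by exactly the argument in the proof of Theorem~\ref{R-coin}: pass to the finite special-solvmanifold cover $\Lambda\bs S$ on which $\bar\alpha\bar f,\bar g$ are induced by Lie group homomorphisms, apply the averaging of Nielsen coincidence numbers under finite regular coverings of orientable manifolds, use $N(\bar\alpha\bar f,\bar g)=|L(\bar\alpha\bar f,\bar g)|=|\det_\Lambda(E_*-A_*D_*)|$ from Theorem~\ref{Jiang-type} and \cite{HLP11}, and finally $[\Gamma:\Lambda]\det_\Lambda=\det_\Gamma$. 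This yields precisely
$$
N(f,g)=\frac{1}{|\Phi|}\sum_{A\in\Phi}|\det(E_*-A_*D_*)|,
$$
and comparison with the previous display gives $R(f,g)=N(f,g)$.

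The one point genuinely requiring care is the validity of this Nielsen averaging formula in the present generality: orientability of $M$ is what forces all the coincidence indices on the special-solvmanifold cover to share a common sign, so that the alternating index sum collapses to $|L|$ and the clean formula above holds — without it the right-hand side would only bound $N(f,g)$. Granting that (and it is in fact implicit in the chain of equalities already used to prove Theorem~\ref{R-coin}), the corollary is immediate; indeed one may simply re-run that chain of equalities verbatim, observing that under the hypothesis $R(f,g)<\infty$ every $\det_\Lambda(E_*-A_*D_*)$ is nonzero, so $\sigma$ may be replaced by absolute value throughout, and the resulting sum computes $R(f,g)$ and $N(f,g)$ simultaneously.
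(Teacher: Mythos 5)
Your proposal is correct and follows essentially the same route as the paper: the paper likewise combines the $\sigma$-averaging formula of Theorem~\ref{R-coin} (finiteness of $R(f,g)$ forcing each $\det(E_*-A_*D_*)\ne0$, so $\sigma$ becomes absolute value) with the Nielsen averaging formula $N(f,g)=\frac{1}{|\Phi|}\sum_{A\in\Phi}|\det(E_*-A_*D_*)|$ for orientable infra-solvmanifolds of type $\R$, which it cites directly from \cite[Theorem~4.5]{HL-a} rather than re-deriving it as you sketch.
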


\begin{proof}
Because $M$ is orientable, the Nielsen number $N(f,g)$ is defined and is equal to, by \cite[Theorem~4.5]{HL-a},
$$
N(f,g)=\frac{1}{|\Phi|}\sum_{A\in\Phi}|\det(E_*-A_*D_*)|.
$$
Since $R(f,g)<\infty$, by Theorem~\ref{R-coin}, $\sigma\!(\det(E_*-A_*D_*))$ is finite for all $A\in\Phi$. By the definition of $\sigma$, we have  $\sigma\!(\det(E_*-A_*D_*))=|\det(E_*-A_*D_*)|$ for all $A\in\Phi$. This finishes the proof.
\end{proof}

\begin{Cor}\label{R-fix}
Let $M=\Pi\bs{S}$ be an infra-solvmanifold of type $\R$ with holonomy group $\Phi$. Let $f:M\to M$ be a continuous map with an affine homotopy lift $(d,D)$. Then
$$
R(f)=\frac{1}{|\Phi|}\sum_{A\in\Phi}\sigma\left(\det(I-A_*D_*)\right),
$$
and if $R(f)<\infty$ then $R(f)=N(f)$.
\end{Cor}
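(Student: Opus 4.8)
The plan is to obtain Corollary~\ref{R-fix} as the specialization of Theorem~\ref{R-coin} to the pair $(f,g)=(f,\id_M)$. The first thing to record is that a point of $M$ is a fixed point of $f$ exactly when it is a coincidence point of $(f,\id_M)$, and that the induced endomorphism of $\Pi$ attached to $\id_M$ is the identity, so the coincidence Reidemeister relation $\beta=\psi(\gamma)\alpha\phi(\gamma)^{-1}$ for $(f,\id_M)$ reduces to ordinary $\phi$-conjugacy $\beta=\gamma\alpha\phi(\gamma)^{-1}$; hence $R(f)=R(\phi)=R(f,\id_M)$. Since $\id_M$ is induced by the affine element $(1,\id_S)$, it admits the affine homotopy lift $(e,E)=(1,\id_S)$ with $E_*=I$. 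Substituting $E_*=I$ into the formula of Theorem~\ref{R-coin} then yields
$$
R(f)=R(f,\id_M)=\frac{1}{|\Phi|}\sum_{A\in\Phi}\sigma\!\left(\det(I-A_*D_*)\right),
$$
which is the first assertion.

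For the second assertion, I would argue as follows. Suppose $R(f)<\infty$. Then every summand $\sigma(\det(I-A_*D_*))$ in the formula above is finite, and by the definition of $\sigma$ this forces $\det(I-A_*D_*)\ne0$ and $\sigma(\det(I-A_*D_*))=|\det(I-A_*D_*)|$ for each $A\in\Phi$. Consequently $R(f)=\frac{1}{|\Phi|}\sum_{A\in\Phi}|\det(I-A_*D_*)|$. On the other hand, the averaging formula for the Nielsen number of a self-map on an infra-solvmanifold of type $\R$ — the analogue, valid here by \cite{HLP11,LL-Nagoya}, of the Nielsen part of Theorem~\ref{AV-all}, and requiring no orientability since the fixed point Nielsen number is defined on any closed manifold — gives precisely $N(f)=\frac{1}{|\Phi|}\sum_{A\in\Phi}|\det(I-A_*D_*)|$. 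Comparing the two expressions gives $R(f)=N(f)$.

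Once Theorem~\ref{R-coin} is available the argument is essentially bookkeeping, so I do not expect a serious obstacle; the two points that deserve a line of care are (i) the identification $R(f)=R(f,\id_M)$ at the level of Reidemeister ($\phi$-conjugacy) classes, and (ii) invoking the Nielsen averaging formula in the correct generality, namely for \emph{all} infra-solvmanifolds of type $\R$ rather than only the orientable ones — this is legitimate for fixed point theory even though the coincidence analogue used in the preceding corollary needs orientability. One could alternatively deduce $R(f)=N(f)$ from the preceding corollary applied to $g=\id_M$, but that would import an unnecessary orientability hypothesis, so the direct comparison above is preferable.
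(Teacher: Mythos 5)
Your proposal is correct and follows exactly the route the paper intends: the paper states this as an immediate consequence of Theorem~\ref{R-coin} specialized to $g=\id_M$ (so $E_*=I$ and coincidence Reidemeister classes become $\phi$-conjugacy classes), combined with the Nielsen averaging formula $N(f)=\frac{1}{|\Phi|}\sum_{A\in\Phi}|\det(I-A_*D_*)|$ of \cite{LL-Nagoya}, which for fixed points needs no orientability. Your two points of care, the identification $R(f)=R(f,\id_M)$ and avoiding the orientability hypothesis of the preceding coincidence corollary, are exactly the right ones and are handled correctly.
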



{By Remarks~\ref{NtoS1} and ~\ref{NtoS2}, since the averaging formulas for the Lefschetz number and the Nielsen number are generalized from infra-nilmanifolds to infra-solvmanifolds of type $\R$ (see \cite{HLP11, LL-Nagoya}), all results and proofs concerning the Nielsen number and the Nielsen zeta function in this article directly generalize to the class of infra-solvmanifolds of type $\R$.}

By Corollary~\ref{R-fix} and \cite[Theorem~4.3]{LL-Nagoya}, the averaging formulas for the Reidemeister number and the Nielsen number on infra-solvmanifolds of type $\R$, we can generalize all results and proofs concerning the Reidemeister zeta function, whenever it is defined, to the class of infra-solvmanifolds of type $\R$. If $R_f(z)$ is defined, then $R(f^n)<\infty$ and so by Corollary~\ref{R-fix}  $R(f^n)=N(f^n)>0$ for all $n>0$ and thus $R_f(z)=N_f(z)$. For example, we can  generalize Theorems~\ref{infra}, ~\ref{T4.5}, ~\ref{FE-case1}, and ~\ref{tor}, and their proofs from infra-nilmanifolds to infra-solvmanifolds of type $\R$ to obtain the
following:

\begin{Thm}\label{infrasolv}
Let $f$ be a continuous map on an  infra-solvmanifold of type $\R$ with an affine homotopy lift $(d,D)$.
Assume $N(f)=|L(f)|$. Then the Nielsen zeta function $N_f(z)$ is a rational function and is equal to
\begin{equation*}
N_f(z)=L_f((-1)^qz)^{(-1)^r}
\end{equation*}
where  $q$ is the number of real eigenvalues of $D_*$ which are $<-1$ and
$r$ is the number of real eigenvalues of $D_*$ of modulus $>1$. When the Reidemeister zeta function $R_f(z)$ is defined, we have $R_f(z)=R_\phi(z)=N_f(z)$.
\end{Thm}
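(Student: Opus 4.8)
The plan is to transport the proof of Theorem~\ref{infra} to the present, wider setting essentially word for word; the only change is that each appeal to the averaging formula on infra-nilmanifolds (Theorem~\ref{AV-all}) is replaced by its counterpart on infra-solvmanifolds of type $\R$. For the Lefschetz and Nielsen numbers these counterparts are available from \cite{HLP11, LL-Nagoya}, and for the Reidemeister numbers from Corollary~\ref{R-fix}. Concretely, since $f$ has an affine homotopy lift $(d,D)$, the map $f^n$ has the affine homotopy lift $(d,D)^n=(*,D^n)$, and hence for every $n\ge1$
\begin{align*}
L(f^n)&=\frac{1}{|\Phi|}\sum_{A\in\Phi}\det(I-A_*D_*^n),\\
N(f^n)&=\frac{1}{|\Phi|}\sum_{A\in\Phi}|\det(I-A_*D_*^n)|,\\
R(f^n)=R(\phi^n)&=\frac{1}{|\Phi|}\sum_{A\in\Phi}\sigma\!\left(\det(I-A_*D_*^n)\right).
\end{align*}

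First I would run the Lefschetz--Nielsen argument exactly as in Theorem~\ref{infra}. The hypothesis $N(f)=|L(f)|$ says that $\big|\sum_{A\in\Phi}\det(I-A_*D_*)\big|=\sum_{A\in\Phi}|\det(I-A_*D_*)|$, so all nonzero terms $\det(I-A_*D_*)$ share a common sign, and therefore $\det(I-A_*D_*)\det(I-B_*D_*)\ge0$ for all $A,B\in\Phi$. Since every $A\in\Phi$ has finite order, $A_*$ has all eigenvalues of modulus one, so the purely linear-algebraic statements \cite[Lemma~8.2.1]{P} and \cite[Theorem~8.2.2]{P} apply verbatim and give $\det(I-A_*D_*^n)\det(I-D_*^n)\ge0$ for all $A\in\Phi$ and all $n$; equivalently $N(f^n)=\epsilon_n L(f^n)$, where $\epsilon_n$ is the sign of $\det(I-D_*^n)$. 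Counting real eigenvalues of $D_*$ one gets $\epsilon_n=(-1)^{r+qn}$, with $q$ (resp. $r$) the number of real eigenvalues of $D_*$ that are $<-1$ (resp. of modulus $>1$), and then
$$
N_f(z)=\exp\!\left(\sum_{n=1}^\infty\frac{(-1)^{r+qn}L(f^n)}{n}z^n\right)=L_f((-1)^qz)^{(-1)^r},
$$
which is rational because $L_f(z)$ is rational for any self-map of a compact polyhedron.

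For the final assertion, suppose $R_f(z)$ is defined, so that $R(f^n)=R(\phi^n)<\infty$ for every $n$. Applying Corollary~\ref{R-fix} to $f^n$, finiteness of $R(f^n)$ forces $\det(I-A_*D_*^n)\ne0$ for all $A\in\Phi$, whence $\sigma\!\left(\det(I-A_*D_*^n)\right)=|\det(I-A_*D_*^n)|$; comparing the averaging formulas for $R(f^n)$ and $N(f^n)$ then gives $R(f^n)=R(\phi^n)=N(f^n)$ for all $n$, so $R_f(z)=R_\phi(z)=N_f(z)$.

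The only genuine thing to verify — and the point where some care is needed — is that all the ingredients imported from the infra-nilmanifold proof really do transfer. The topological inputs (the averaging formulas for $L$, $N$, $R$, together with the implication $N(f)=|L(f)|\Rightarrow N(f^n)=|L(f^n)|$) hold on infra-solvmanifolds of type $\R$ by \cite{HLP11, LL-Nagoya} and Corollary~\ref{R-fix}, while the remaining ingredient, the sign-propagation lemmas of \cite[\S8.2]{P}, is a statement about a finite group of matrices whose elements have eigenvalues on the unit circle together with an arbitrary matrix $D_*$, and the relevant eigenvalue condition on $A_*$ holds here for the same reason as before, namely that the holonomy group $\Phi$ is finite. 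No new estimates are required; the degenerate situation $N(f)=|L(f)|=0$, which forces the relevant determinants to vanish, is dealt with exactly as in Theorem~\ref{infra}.
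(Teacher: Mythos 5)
Your proposal is correct and takes essentially the same route as the paper: the paper obtains Theorem~\ref{infrasolv} precisely by observing (via Remarks~\ref{NtoS1} and \ref{NtoS2}, the averaging formulas of \cite{HLP11,LL-Nagoya} and Corollary~\ref{R-fix}) that the proof of Theorem~\ref{infra}, including the sign-propagation results of \cite[Lemma~8.2.1, Theorem~8.2.2]{P} and the final comparison $R(f^n)=N(f^n)$, carries over verbatim to infra-solvmanifolds of type $\R$, which is exactly the transfer you carry out.
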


\begin{Thm}\label{zeta_infrasolv}
Let $f$ be a continuous map on an infra-solvmanifold $\Pi\bs{S}$ of type $\R$ with an affine homotopy lift $(d,D)$.
Then the Reidemeister zeta function, whenever it is defined, is a rational function and is equal to
\begin{equation*}
R_f(z)=N_f(z)=\begin{cases}
L_f((-1)^nz)^{(-1)^{p+n}}&\text{when $\Pi=\Pi_+$;}\\
\left(\frac{L_{f_+}((-1)^nz)}{L_f((-1)^nz)}\right)^{(-1)^{p+n}}&\text{when $\Pi\ne\Pi_+$,}
\end{cases}
\end{equation*}
\end{Thm}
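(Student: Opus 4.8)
The plan is to carry the proofs of Theorems~\ref{infra} and ~\ref{T4.5} over from infra-nilmanifolds to infra-solvmanifolds of type $\R$. This is legitimate because the three averaging formulas invoked there --- for the Lefschetz, the Nielsen and the Reidemeister numbers --- all hold in the present setting: the first two by \cite{HLP11, LL-Nagoya} and the last by Corollary~\ref{R-fix}. First I would dispose of the Reidemeister numbers. If $R_f(z)$ is defined then $R(f^n)<\infty$ for every $n>0$, and since $f^n$ has affine homotopy lift $(d,D)^n=(*,D^n)$, Corollary~\ref{R-fix} applied to each $f^n$ gives
$$
R(f^n)=\frac{1}{|\Phi|}\sum_{A\in\Phi}\sigma\!\left(\det(I-A_*D_*^n)\right)=\frac{1}{|\Phi|}\sum_{A\in\Phi}|\det(I-A_*D_*^n)|=N(f^n)>0,
$$
the middle equality holding precisely because finiteness of $R(f^n)$ forces $\det(I-A_*D_*^n)\ne0$ for all $A\in\Phi$ and all $n$. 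Hence $R_f(z)=N_f(z)$, and $R_\phi(z)=R_f(z)$ by \cite[Theorem~4.3]{LL-Nagoya}, so the problem reduces to evaluating $N_f(z)$.

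For $N_f(z)$ I would reproduce the Dekimpe--Dugardein analysis of Section~\ref{DeDu} in the solvable setting. Using Maschke's theorem (Theorem~\ref{Maschke}) and the finiteness of $\Phi$, one chooses a linear basis of $\frakS$ in which $\rho(\Phi)=\Phi_*$ is block diagonal $\Phi_1\x\Phi_2$ and the linearization $D_*$ is block upper triangular with diagonal blocks $D_1,D_2$ whose eigenvalues have modulus $\le1$ and $>1$ respectively; this uses only the intertwining identity $\hat\phi(A)_*D_*=D_*A_*$ and is purely linear-algebraic, hence indifferent to nilpotency. One then forms the subgroup $\Pi_+\subset\Pi$ as in Section~\ref{DeDu}; it has index at most $2$, and when the index is $2$ it is again an SB-group, $M_+=\Pi_+\bs{S}$ is a double covering of $M$, and $f$ lifts to $f_+:M_+\to M_+$ with the same affine homotopy lift $(d,D)$, just as in the almost-Bieberbach case. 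Granting the averaging formula for Nielsen numbers on infra-solvmanifolds of type $\R$, the argument of \cite[Theorem~4.4]{DeDu} then yields, for every $k\ge1$,
$$
N(f^k)=\begin{cases}(-1)^{p+(k+1)n}L(f^k),&\Pi=\Pi_+,\\ (-1)^{p+(k+1)n}\bigl(L(f_+^k)-L(f^k)\bigr),&\Pi\ne\Pi_+,\end{cases}
$$
with $p$ the number of real eigenvalues of $D_*$ that are $>1$ and $n$ the number that are $<-1$; this is the generalization already recorded in Remark~\ref{NtoS1}.

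It then remains only to sum the defining series, exactly as in the proof of Theorem~\ref{infra}. Writing $(-1)^{p+(k+1)n}=(-1)^{p+n}\bigl((-1)^n\bigr)^k$, in the case $\Pi=\Pi_+$ one obtains
$$
N_f(z)=\exp\!\left((-1)^{p+n}\sum_{k=1}^\infty\frac{L(f^k)}{k}\bigl((-1)^nz\bigr)^k\right)=L_f\bigl((-1)^nz\bigr)^{(-1)^{p+n}},
$$
while in the case $\Pi\ne\Pi_+$ the same manipulation splits the exponent into two Lefschetz generating functions and gives $N_f(z)=\bigl(L_{f_+}((-1)^nz)/L_f((-1)^nz)\bigr)^{(-1)^{p+n}}$. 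Since $L_f(z)$ and $L_{f_+}(z)$ are rational functions, so is $N_f(z)=R_f(z)$, which is the asserted identity.

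The step I expect to be the main obstacle is confirming that the \emph{structural} half of \cite[Theorem~4.4]{DeDu} --- the sign bookkeeping for $\det(I-A_*D_*^n)$ as $A$ ranges over $\Phi$ and over the image in $\Phi$ of the subgroup defining $\Pi_+$, that is, the solvable analogues of \cite[Lemma~8.2.1]{P} and \cite[Theorem~8.2.2]{P} --- really goes through after replacing $\frakG$ by $\frakS$. The point in its favour is that these are statements only about the rotation blocks and the trivial and $\tau$-isotypic blocks of the various $A_*$ matched against the block-triangular $D_*$, so they reduce to determinant identities of the type already established in Section~\ref{Coin-S} (there for coincidences; here specialized to the fixed-point case $g=\id$). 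With that granted, the remainder is the elementary algebra above, and the hypothesis that $R_f(z)$ be defined is used only through Corollary~\ref{R-fix}, to force $R(f^n)=N(f^n)$ for every $n$.
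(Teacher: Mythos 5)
Your proposal follows essentially the same route as the paper: the paper obtains this theorem by using Corollary~\ref{R-fix} to force $R(f^n)=N(f^n)>0$ for all $n$ (hence $R_f(z)=N_f(z)$) and by invoking Remarks~\ref{NtoS1} and \ref{NtoS2}, i.e.\ the averaging formulas of \cite{HLP11,LL-Nagoya}, to transfer Theorems~\ref{T4.4} and \ref{T4.5} from infra-nilmanifolds to infra-solvmanifolds of type $\R$, after which summing the series is the same elementary manipulation you carry out. If anything you are more explicit than the paper, which does not redo the block decomposition, the $\Pi_+$ construction, or the sign bookkeeping of \cite[Lemma~8.2.1, Theorem~8.2.2]{P} in the solvable setting, but simply cites the remark in \cite[Sec.~4]{DeDu} that these carry over.
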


\begin{Thm}[{Functional Equation}]\label{zeta-S}
Let $f$ be a continuous map on an {orientable infra-solvmanifold $M=\Pi\bs{S}$ of type $\R$} with an affine homotopy lift $(d,D)$. Then the  Reidemeister zeta function, whenever it is defined, and the Nielsen zeta function have the following functional equations:
\begin{equation*}
R_{f}\left(\frac{1}{dz}\right)
=\begin{cases}
R_f(z)^{(-1)^m}\epsilon^{(-1)^{p+n}}&\text{when $\Pi=\Pi_+$;}\\
R_f(z)^{(-1)^m}\epsilon^{-1}&\text{when $\Pi\ne\Pi_+$}
\end{cases}
\end{equation*}
and
\begin{equation*}
N_{f}\left(\frac{1}{dz}\right)
=\begin{cases}
N_f(z)^{(-1)^m}\epsilon^{(-1)^{p+n}}&\text{when $\Pi=\Pi_+$;}\\
N_f(z)^{(-1)^m}\epsilon^{-1}&\text{when $\Pi\ne\Pi_+$}
\end{cases}
\end{equation*}
where $d$ is a degree $f$, $m= \dim M$, $\epsilon$ is a constant in $\bbc^\times$, $\sigma=(-1)^n$, $p$ is the number of real eigenvalues of $D_*$ which are $>1$ and $n$ is the number of real eigenvalues of $D_*$ which are $<-1$. If $|d|=1$ then $\epsilon=\pm1$.
\end{Thm}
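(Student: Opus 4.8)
The plan is to reduce the functional equation to that of the Lefschetz zeta function, Lemma~\ref{Fried}, by substituting into it the explicit rational expressions for $R_f(z)=N_f(z)$ furnished by Theorem~\ref{zeta_infrasolv}; this is precisely the argument used to prove Theorem~\ref{FE-case1}, so the task is to check that each ingredient there remains available on an orientable infra-solvmanifold of type $\R$. Two facts are used at the outset. Since $M$ is a closed orientable $m$-manifold, Poincar\'e duality holds, so Lemma~\ref{Fried} applies to $f$ on $M$ and, when $\Pi\ne\Pi_+$, to its lift $f_+$ on the double cover $M_+=\Pi_+\bs S$, which is again a closed orientable infra-solvmanifold of type $\R$. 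Moreover $\chi(M)=0$ (see the Remark following the proof of Theorem~\ref{FE-case1}: $\pi_1(M)=\Pi$ is torsion-free and virtually polycyclic, or $M$ is aspherical with a nontrivial abelian normal subgroup of $\pi_1(M)$ so Gottlieb's theorem applies), and hence $\chi(M_+)=2\chi(M)=0$ as well. Consequently the factor $(-\sigma dz)^{(-1)^m\chi(M)}$ appearing in Lemma~\ref{Fried} is identically $1$, which is why no such monomial occurs in the statement.

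First I would dispose of the case $\Pi=\Pi_+$. Here Theorem~\ref{zeta_infrasolv} gives $R_f(z)=N_f(z)=L_f(\sigma z)^{(-1)^{p+n}}$ with $\sigma=(-1)^n$; substituting $z\mapsto 1/(dz)$ with $d=\deg f$ and applying Lemma~\ref{Fried} with $\alpha=\sigma$ and $\chi(M)=0$ gives $R_f(1/(dz))=N_f(1/(dz))=\bigl(\epsilon\,L_f(\sigma z)^{(-1)^m}\bigr)^{(-1)^{p+n}}=R_f(z)^{(-1)^m}\epsilon^{(-1)^{p+n}}$, with $\epsilon=\pm1$ when $|d|=1$ by the same lemma. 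For $\Pi\ne\Pi_+$, Theorem~\ref{zeta_infrasolv} gives $R_f(z)=N_f(z)=L_{f_+}(\sigma z)^{(-1)^{p+n}}L_f(\sigma z)^{(-1)^{p+n+1}}$, and the extra ingredient needed is that $f$ and $f_+$ have the same degree; granting this, I apply Lemma~\ref{Fried} to $L_{f_+}$ and to $L_f$ with common $\alpha=\sigma$ and common degree $d$, use $\chi(M)=\chi(M_+)=0$ to kill the monomial factors, and collect the resulting constants into a single constant $\epsilon\in\bbc^\times$ (which is $\pm1$ when $|d|=1$), obtaining $R_f(1/(dz))=N_f(1/(dz))=R_f(z)^{(-1)^m}\epsilon^{-1}$. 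The statements for $N_f$ are literally the same equalities, since $R_f(z)=N_f(z)$ throughout by Corollary~\ref{R-fix} and Theorem~\ref{zeta_infrasolv}.

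The step I expect to be the main obstacle is the equality $\deg f=\deg f_+$, since it is the only place where orientability and the geometry of the double cover $\pi:M_+\to M$ genuinely enter, rather than formal manipulation of power series. I would argue as in the proof of Theorem~\ref{FE-case1}: the deck group $\Pi/\Pi_+\cong\bbz_2$ acts on $H^m(M_+;\bbq)$, the pullback $\pi^*:H^m(M;\bbq)\to H^m(M_+;\bbq)^{\bbz_2}$ is injective, so $H^m(M_+;\bbq)^{\bbz_2}\ne 0$ and therefore equals all of $H^m(M_+;\bbq)$, i.e.\ the nontrivial deck transformation $x$ satisfies $x^*[M_+]=[M_+]$ and $\pi^*:H^m(M;\bbq)\to H^m(M_+;\bbq)$ is an isomorphism; applying $f_+^*$ to $\pi^*[M]$ and using $\pi\circ f_+=f\circ\pi$ then yields $\deg f\cdot\pi^*[M]=\deg f_+\cdot\pi^*[M]$, hence $\deg f=\deg f_+$. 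This uses only that $M_+\to M$ is a regular double cover of closed orientable manifolds and that $f_+$ lifts $f$, so it carries over verbatim from the infra-nilmanifold case; with it in hand, the proof of Theorem~\ref{zeta-S} is a transcription of that of Theorem~\ref{FE-case1}, drawing its inputs now from Theorem~\ref{zeta_infrasolv}, Remark~\ref{NtoS2}, and Corollary~\ref{R-fix}.
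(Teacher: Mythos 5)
Your proposal is correct and follows essentially the same route as the paper: the paper obtains Theorem~\ref{zeta-S} precisely by transcribing the proof of Theorem~\ref{FE-case1} to infra-solvmanifolds of type $\R$, using the generalized rationality statement (Theorem~\ref{zeta_infrasolv}, Remark~\ref{NtoS2}), Corollary~\ref{R-fix} to identify $R_f(z)=N_f(z)$, Lemma~\ref{Fried} with $\chi(M)=\chi(M_+)=0$, and the same double-cover argument that $\deg f=\deg f_+$. Your verification that each ingredient (orientability of $M_+$, injectivity of $\pi^*$ into the $\bbz_2$-invariants, vanishing Euler characteristic) carries over is exactly what the paper asserts.
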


\begin{Thm}
Let $f$ be a continuous map on an infra-solvmanifold of type $\R$ with an affine homotopy lift $(d,D)$. Then the Nielsen zeta function $N_f(z)$ and the Reidemeister zeta function $R_f(z)$, whenever it is defined, have the same positive radius of convergence $R$ which admits following estimation
\begin{equation*}
 R \geq \exp(-h)>0,
\end{equation*}
where $h=\inf \{h(g)\mid g\simeq f\}$.

If $1$ is not in the spectrum of $D_{*}$, the radius $R$ of convergence of $R_f(z)$ is
$$
R=\frac{1}{N^{\infty}(f)}=\frac{1}{\exp h(\bar{f})}
=\frac{1}{\sp(\bigwedge D_{*})}.
$$
\end{Thm}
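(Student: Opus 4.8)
The plan is to repeat the argument of Theorem~\ref{RC} verbatim, using the infra-solvmanifold versions of the three ingredients it relied on: Corollary~\ref{R-fix} (so that finiteness of $R(f^n)$ forces $R(f^n)=N(f^n)>0$), Theorem~\ref{MP-infranil} (the asymptotic Nielsen number equals $\sp(\bigwedge D_*)$), and the preceding theorem on topological entropy. First I would treat the Reidemeister zeta function. Assume $R_f(z)$ is defined, so $R(f^n)<\infty$ for all $n>0$; by Corollary~\ref{R-fix} this gives $R(f^n)=N(f^n)$, and since $N(f^n)\ge1$ always, we get $R(f^n)=N(f^n)>0$ for all $n$ and hence $R_f(z)=N_f(z)$. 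By the Cauchy--Hadamard formula,
$$
\frac1R=\limsup_{n\to\infty}\left(\frac{N(f^n)}{n}\right)^{1/n}=\limsup_{n\to\infty}N(f^n)^{1/n}=N^\infty(f),
$$
using $N(f^n)\ge1$ to absorb the $1/n$ and to see the $\limsup$ is $\ge1$. By Ivanov's inequality (Lemma~\ref{Iv}), $N^\infty(f)\le\exp h(f)$, so $R\ge\exp(-h(f))$; since $R$ is a homotopy invariant this yields $R\ge\exp(-h)$ with $h=\inf\{h(g)\mid g\simeq f\}$. Finiteness of $h$ (hence positivity of $\exp(-h)$) follows by choosing a smooth representative $g\simeq f$: its entropy is finite by \cite{pz}, so $\exp(-h)\ge\exp(-h(g))>0$.

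Next I would handle the Nielsen zeta function $N_f(z)$ without assuming $R_f(z)$ is defined. If $\limsup_n N(f^n)^{1/n}\ge1$, the computation above applies unchanged. Otherwise $\limsup_n N(f^n)^{1/n}<1$; since each $N(f^n)$ is a nonnegative integer, this is only possible if $N(f^n)=0$ for all but finitely many $n$, whence $1/R=0$, i.e.\ $R=\infty$, and also $N^\infty(f)=1$. In either case the estimate $R\ge\exp(-h)>0$ holds and $N_f(z)$ and $R_f(z)$ (when defined) share the same radius.

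Finally, under the hypothesis that $1$ is not in the spectrum of $D_*$, Theorem~\ref{MP-infranil} gives $N^\infty(f)=\sp(\bigwedge D_*)$, and the preceding theorem (the entropy theorem for infra-solvmanifolds of type $\R$) gives $\log\sp(\bigwedge D_*)=\log N^\infty(\bar f)=h(\bar f)$, where $\bar f$ is the map induced by the affine lift $(d,D)$. Combining with $1/R=N^\infty(f)$ yields
$$
R=\frac1{N^\infty(f)}=\frac1{\exp h(\bar f)}=\frac1{\sp(\bigwedge D_*)}.
$$
I do not anticipate a genuine obstacle here: the content is entirely a transcription of the infra-nilmanifold proof, the only points needing a little care being the reduction $R(f^n)=N(f^n)$ via Corollary~\ref{R-fix} and the degenerate case $\sp(D_*)\le1$ (equivalently $N^\infty(f)=1$, $R=\infty$), which is covered by the second paragraph. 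All deeper inputs (averaging formula, asymptotic Nielsen number, and the entropy identities) have already been established for infra-solvmanifolds of type $\R$ earlier in the paper.
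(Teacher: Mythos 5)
Your proposal is correct and follows essentially the same route as the paper: the paper proves this by transcribing the infra-nilmanifold argument of Theorem~\ref{RC} (Cauchy--Hadamard plus Ivanov's inequality, the smooth-representative finiteness of entropy from \cite{pz}, the degenerate case $N(f^n)=0$ for almost all $n$, and the final identities from Theorem~\ref{MP-infranil} and Theorem~\ref{AB}), with Corollary~\ref{R-fix} supplying $R(f^n)=N(f^n)$ exactly as you use it. The only cosmetic point is that the bound $N(f^n)\ge1$ should be justified by $R(f^n)\ge1$ together with $R(f^n)=N(f^n)$ (it is not true for arbitrary maps), which is how it is meant in context.
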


\begin{Thm} \label{tor-S}
Let $f$ be a homeomorphism on an infra-solvmanifold $\Pi\bs{S}$ of type $\R$ with an affine homotopy lift $(d,D)$. Then
\begin{align*}
&|N_f((-1)^n\lambda)^{(-1)^{p+n}}|\\
&=\begin{cases}
|L_f(\lambda)|=\tau(T_f;p^* E)^{-1}&\text{when $\Pi=\Pi_+$;}\\
|L_{f_+}(\lambda)L_f(\lambda)^{-1}| =\tau(T_f; p^* E)\tau(T_{f_{+}}; p_{+}^* E)^{-1} &\text{when $\Pi\ne\Pi_+$,}
\end{cases}
\end{align*}
where  $p$ is the number of real eigenvalues of $D_*$ which are $>1$ and $n$ is the number of real eigenvalues of $D_*$ which are $<-1$.
\end{Thm}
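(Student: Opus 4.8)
The plan is to combine the rationality formula of Theorem~\ref{zeta_infrasolv} with the identification \eqref{ReidTorLef} between special values of the Lefschetz zeta function and the Reidemeister torsion of a mapping torus, exactly as Theorem~\ref{tor} was deduced from Theorem~\ref{T4.5} in the infra-nilmanifold case. Since $f$ is a homeomorphism, its affine homotopy lift $(d,D)$ has $D$ invertible, so the mapping torus $T_f$ is a genuine fiber bundle $p:T_f\to S^1$ with fiber $M$; likewise $f$ lifts to a map $f_+:M_+\to M_+$ (again a homeomorphism, as $D$ is invertible) on the possibly trivial double cover $M_+=\Pi_+\bs{S}$ provided by the Dekimpe--Dugardein decomposition---available on infra-solvmanifolds of type $\R$ by Remark~\ref{NtoS1}---with the same affine homotopy lift $(d,D)$, giving a fiber bundle $p_+:T_{f_+}\to S^1$ with fiber $M_+$. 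Fix a flat complex line bundle $E$ over $S^1$ whose holonomy is a scalar $\lambda$ of modulus one and such that $E\otimes H^iM$ and $E\otimes H^iM_+$ are acyclic for all $i$; this acyclicity is precisely the condition that forces $L_f(\lambda),L_{f_+}(\lambda)\in\bbc^\times$, and then \eqref{ReidTorLef}, applied to $f$ and to $f_+$, gives $\tau(T_f;p^*E)=|L_f(\lambda)|^{-1}$ and $\tau(T_{f_+};p_+^*E)=|L_{f_+}(\lambda)|^{-1}$.

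Next substitute $z=(-1)^n\lambda$ into the $N_f(z)$-formula of Theorem~\ref{zeta_infrasolv}, which by Remark~\ref{NtoS1} holds for every continuous map on an infra-solvmanifold of type $\R$; since $(-1)^nz=\lambda$, this yields
\[
N_f((-1)^n\lambda)=
\begin{cases}
L_f(\lambda)^{(-1)^{p+n}}&\text{when }\Pi=\Pi_+,\\
\bigl(L_{f_+}(\lambda)\,L_f(\lambda)^{-1}\bigr)^{(-1)^{p+n}}&\text{when }\Pi\ne\Pi_+.
\end{cases}
\]
Raising both sides to the power $(-1)^{p+n}$ cancels the sign exponent, and then taking absolute values gives $\bigl|N_f((-1)^n\lambda)^{(-1)^{p+n}}\bigr|=|L_f(\lambda)|$ when $\Pi=\Pi_+$ and $\bigl|N_f((-1)^n\lambda)^{(-1)^{p+n}}\bigr|=|L_{f_+}(\lambda)\,L_f(\lambda)^{-1}|$ when $\Pi\ne\Pi_+$. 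Inserting the two torsion identities from the previous paragraph converts the right-hand sides into $\tau(T_f;p^*E)^{-1}$ and $\tau(T_f;p^*E)\,\tau(T_{f_+};p_+^*E)^{-1}$, respectively, which is exactly the stated formula. (If in addition $R_f(z)$ happens to be defined, then $R_f=R_\phi=N_f$ by Corollary~\ref{R-fix}, so the same identity holds for $R_f$ and $R_\phi$.)

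There is no serious obstacle here; the work consists only in checking that every ingredient is legitimately available over infra-solvmanifolds of type $\R$. The torsion identity \eqref{ReidTorLef} is a general statement about homeomorphisms of compact polyhedra and carries over verbatim. The rational expression for $N_f(z)$ is the content of Theorem~\ref{zeta_infrasolv}---equivalently, the generalization recorded in Remark~\ref{NtoS1} of the $N_f$-part of Theorem~\ref{T4.5}---and itself rests on the averaging formula for Nielsen numbers on infra-solvmanifolds of type $\R$ \cite{HLP11,LL-Nagoya} together with the Dekimpe--Dugardein argument of \cite{DeDu}. Finally, the bundle picture---the groups $\Pi_+$, the covers $M_+$, the lift $f_+$, and the acyclicity hypotheses on $E\otimes H^iM$ and $E\otimes H^iM_+$---is set up precisely as in Section~\ref{Tor}, and, as there, the choice of preferred generator of $\pi_1(S^1)$ is immaterial. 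The only mildly delicate point is to confirm that, for a homeomorphism $f$, the automorphism $D$ normalizes $\Pi_+$ so that $f_+$ is indeed a homeomorphism of $M_+$; this follows because $\Pi_+$ is determined by the eigenvalue splitting of $D_*$, with which the induced automorphism $\hat\phi$ of $\Phi$ is compatible via \eqref{*}.
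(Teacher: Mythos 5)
Your proposal is correct and follows essentially the same route as the paper: the paper obtains Theorem~\ref{tor-S} exactly by transplanting the proof of Theorem~\ref{tor} — i.e., combining the formula for $N_f(z)$ in Theorem~\ref{zeta_infrasolv} (valid for all maps on infra-solvmanifolds of type $\R$ by Remark~\ref{NtoS1}) with the torsion identity \eqref{ReidTorLef} for the mapping tori of $f$ and $f_+$ — from infra-nilmanifolds to infra-solvmanifolds of type $\R$. Your substitution $z=(-1)^n\lambda$, cancellation of the exponent $(-1)^{p+n}$, and use of the acyclicity hypotheses on $E\otimes H^iM$ and $E\otimes H^iM_+$ reproduce the paper's argument.
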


\begin{Rmk}
One may formulate the above theorem also for the Reidemeister zeta function of a homeomorphism $f$ on an infra-solvmanifold of type $\R$ . However it will be seen in Theorem~\ref{Bour} that in the  case of $R_f(z)$ such a manifold must be an infra-nilmanifold.
\end{Rmk}

\begin{Rmk}
{For any map $f$ on an infra-solvmanifold of type $\R$, Theorem~\ref{T4.4} states the relation between the Lefschetz numbers and the Nielsen numbers of iterates of $f$ and Corollary~\ref{R-fix} states the relation of the Nielsen numbers with the Reidemeister numbers of iterates of $f$ when these are finite. Via these relations some of the arithmetic, analytic, and asymptotic properties of the sequences $N(f^n)$ and $R(f^n)$ can be determined from the corresponding properties of the sequence $L(f^n)$. For the sequence $L(f^n)$, all these properties were
thoroughly discussed in \cite[Sect.~\!3.1]{JM}, see also \cite{BaBo}.}
\end{Rmk}

\section{The Reidemeister zeta function is never defined for any homeomorphism of infra-solvmanifold of type $\R$, not an infra-nilmanifold}
\label{No R}

Consider now as an example  closed $3$-manifolds with $\Sol$-geometry. We refer to \cite[Sec.\!~6]{HL-a} for details about the Reidemeister numbers on these manifolds. These are infra-solvmanifolds $\Pi\bs\Sol$ of type $\R$. Let $\Pi_1$ be a lattice of $\Sol$:
$$
\Pi_1=\GammaA=\langle{a_1,a_2,\tau\mid [a_1,a_2]=1,\tau a_i\tau^{-1}=A(a_i)}\rangle,
$$
where $A$ is a $2\x2$-integer matrix of determinant $1$ and trace $>2$. Consider a homomorphism $\phi$ on $\Pi_1$ of type (III), i.e., $\phi$ is given by the form
\begin{align*}
\phi(a_1)=\phi(a_2)=1, \phi(\tau)=a_1^pa_2^q\tau^r, \ r\ne\pm1.
\end{align*}
Then it is shown in \cite[Theorem~6.1]{HL-a} that $R(\phi)=|1-r|$. We can observe easily that $\phi^n$ is also of type (III) and $R(\phi^n)=|1-r^n|$ for all $n>0$. Hence
$$
R_\phi(z)=\exp\left(\sum_{n=1}^\infty\frac{|1-r^n|}{n}z^n\right)
=\begin{cases}
\frac{1}{1-z}&\text{when $r=0$;}\\
\frac{1-\frac{r}{|r|}z}{1-|r|z}&\text{when $|r|>1$.}
\end{cases}
$$
It can be seen also that if $\phi$ is not of type (III), then $R(\phi)=\infty$ or $R(\phi^2)=\infty$. Thus the associated Reidemeister zeta function is not defined.
A similar phenomenon happens for the infra-solvmanifold $\Pi^\pm\bs\Sol$. For the remaining infra-solvmanifolds $\Pi_3\bs\Sol$ and $\Pi_6\bs\Sol$, it is shown that only trivial map has a finite Reidemeister number, which is $1$. That is, only the trivial map defines the Reidemeister zeta function. {The homomorphisms above  are eventually commutative, and in fact,  for every  eventually commutative homomorphism  the
Reidemeister zeta function, whenever it is defined, is a rational function, see Theorem 9 and Theorem 10 in \cite{Fel00}.}

We will show now that if  the Reidemeister zeta function is defined for a homeomorphism on an infra-solvmanifold of type $\R$, then the manifold must be an infra-nilmanifold.

Recall the following

\begin{Prop}[{\cite[Ex.\,21(b), p.\,97]{Bourbaki}, \cite[Proposition~3.6]{Smale}}]
\label{BS}
Let $\sigma$ be a Lie algebra automorphism. If none of the eigenvalues of $\sigma$ is a root of unity, then the Lie algebra must be nilpotent.
\end{Prop}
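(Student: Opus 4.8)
The plan is to reduce to a complex Lie algebra and then to force nilpotency through the spectra of the adjoint operators. First I would complexify: replacing the Lie algebra $\mathfrak{g}$ by $\mathfrak{g}\otimes_{\bbr}\bbc$ and $\sigma$ by its $\bbc$-linear extension changes neither the characteristic polynomial of $\sigma$ (so the no-root-of-unity hypothesis survives) nor the truth of nilpotency (which is detected by the lower central series). So assume $\mathfrak{g}$ is a finite-dimensional complex Lie algebra and $\sigma\in\mathrm{Aut}(\mathfrak{g})$ has no eigenvalue equal to a root of unity. Decomposing $\mathfrak{g}=\bigoplus_{\lambda}\mathfrak{g}_\lambda$ into generalized eigenspaces of $\sigma$, the identity $\sigma[x,y]=[\sigma x,\sigma y]$ forces $[\mathfrak{g}_\lambda,\mathfrak{g}_\mu]\subseteq\mathfrak{g}_{\lambda\mu}$; in particular $\mathfrak{g}_1=0$ because $1$ is not an eigenvalue. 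This grading is useful orientation, but the decisive input is that every eigenvalue of $\sigma$, and hence of each power $\sigma^{M}$, avoids the roots of unity. The two main steps are: (i) $\mathfrak{g}$ is solvable, and (ii) every $\mathrm{ad}(x)$ is nilpotent, whence Engel's theorem gives nilpotency.

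For (i), I would use the Levi decomposition $\mathfrak{g}=\mathfrak{r}\rtimes\mathfrak{s}$ with $\mathfrak{r}$ the solvable radical, which is characteristic and hence $\sigma$-invariant. Then $\sigma$ descends to an automorphism $\bar\sigma$ of the semisimple quotient $\mathfrak{s}\cong\mathfrak{g}/\mathfrak{r}$, whose eigenvalues form a subset of those of $\sigma$ and so again contain no root of unity. I claim this forces $\mathfrak{s}=0$. Indeed, $\mathrm{Aut}(\mathfrak{s})$ is a complex algebraic group whose component group $\mathrm{Aut}(\mathfrak{s})/\mathrm{Inn}(\mathfrak{s})$ is finite, so some power $\bar\sigma^{N}=\mathrm{Ad}(g)$ is inner. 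Writing the Jordan decomposition $g=g_sg_u$ in the adjoint group, one has $\mathrm{spec}(\mathrm{Ad}(g))=\mathrm{spec}(\mathrm{Ad}(g_s))$ since $\mathrm{Ad}(g_u)$ is unipotent and commutes with $\mathrm{Ad}(g_s)$, while the semisimple element $g_s$ lies in a maximal torus $T$ with $\mathrm{Ad}(g_s)$ fixing $\mathrm{Lie}(T)\ne0$ pointwise; hence $1\in\mathrm{spec}(\bar\sigma^{N})$, forcing some eigenvalue of $\bar\sigma$ to be an $N$-th root of unity, a contradiction unless $\mathfrak{s}=0$. I expect this semisimple obstruction to be the main difficulty, since it is the only place where one must rule out the subtle possibility of eigenvalues of modulus $1$ that are not roots of unity (e.g. $e^{2\pi i\theta}$ with $\theta$ irrational), for which a modulus or Killing-form argument alone does not suffice; the algebraic-group and Jordan-decomposition argument handles exactly this case.

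With $\mathfrak{g}$ now solvable I would invoke Lie's theorem to triangularize $\mathrm{ad}(\mathfrak{g})$ simultaneously, producing weights $\omega_1,\dots,\omega_n\in\mathfrak{g}^{*}$ (Lie algebra homomorphisms, hence vanishing on $[\mathfrak{g},\mathfrak{g}]$) whose values are the eigenvalues of the operators $\mathrm{ad}(x)$. Since $\mathrm{ad}(\sigma x)=\sigma\,\mathrm{ad}(x)\,\sigma^{-1}$ is conjugate to $\mathrm{ad}(x)$, the multiset $\{\omega_i(\sigma x)\}_i$ equals $\{\omega_i(x)\}_i$ for every $x\in\mathfrak{g}$, so the dual map $\sigma^{*}$ permutes the finite set $\{\omega_1,\dots,\omega_n\}$; hence some power $(\sigma^{*})^{M}$ fixes each $\omega_i$. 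If some $\omega_i\ne0$, then it is a fixed vector of $(\sigma^{M})^{*}$, so $1$ is an eigenvalue of $(\sigma^{M})^{*}$ and therefore of $\sigma^{M}$, making some eigenvalue of $\sigma$ an $M$-th root of unity, contrary to hypothesis. Thus all $\omega_i=0$, every $\mathrm{ad}(x)$ is nilpotent, and Engel's theorem yields that $\mathfrak{g}$ is nilpotent, which completes the proof.
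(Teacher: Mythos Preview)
The paper does not supply its own proof of this proposition; it is quoted with references to Bourbaki and Smale and then used as a black box in the proof of Theorem~\ref{Bour}. So there is nothing in the paper to compare your argument against.

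Your proof is correct and is essentially the classical argument (close to Smale's). The two steps are handled properly: for the Levi factor you correctly pass to an inner power $\bar\sigma^{N}$, take its Jordan decomposition inside the algebraic group $\mathrm{Inn}(\mathfrak{s})$, and use that the semisimple part lies in a maximal torus and hence fixes the corresponding Cartan subalgebra pointwise, producing the forbidden eigenvalue $1$. For the solvable case, the key point---that $\sigma^{*}$ permutes the finite multiset of Lie weights because $\mathrm{ad}(\sigma x)$ is conjugate to $\mathrm{ad}(x)$---is stated and used correctly; the passage from ``same characteristic polynomial for all $x$'' to ``same multiset of linear functionals'' is justified by unique factorization in $(\mathrm{Sym}\,\mathfrak{g}^{*})[t]$, which you might want to make explicit. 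One cosmetic remark: using $g_s$ for the semisimple Jordan part while $\mathfrak{s}$ already denotes the Levi factor is mildly confusing notation.
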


\begin{Thm}\label{Bour}
If the Reidemeister zeta function $R_f(z)$ is defined for a homeomorphism $f$ on an infra-solvmanifold $M$ of type $\R$, then $M$ is an infra-nilmanifold.
\end{Thm}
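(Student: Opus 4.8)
The plan is to extract from the averaging formula for Reidemeister numbers a spectral restriction on the linear part $D_*$ of an affine homotopy lift of $f$, and then apply Proposition~\ref{BS}. So let $(d,D):S\to S$ be an affine homotopy lift of $f$ and $\phi:\Pi\to\Pi$ the induced homomorphism, so that $\phi(\alpha)\circ(d,D)=(d,D)\circ\alpha$ for all $\alpha\in\Pi$.

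First I would check that $D_*$ is a Lie algebra automorphism of $\frakS$. Since $f$ is a homeomorphism, $\phi$ is an automorphism of $\Pi$. Pick, by \cite[Lemma~2.1]{LL-Nagoya}, a fully invariant subgroup $\Lambda\subset\Gamma=\Pi\cap S$ of finite index; full invariance forces $\phi(\Lambda)=\Lambda$, so $\phi|_\Lambda$ is an automorphism of the lattice $\Lambda$ of $S$. Restricting the defining relation to $\lambda\in\Lambda\subset S$ gives, exactly as in Section~\ref{DeDu}, $\phi(\lambda)=dD(\lambda)d^{-1}=(\tau_d D)(\lambda)$, so $\phi|_\Lambda$ is realized by the Lie group homomorphism $F=\tau_d D:S\to S$; by the rigidity available for lattices in simply connected solvable Lie groups of type $\R$ (\cite[Theorem~2.2]{LL-Nagoya}) such a realization is unique. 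Applying the same to $(\phi|_\Lambda)^{-1}$ and using uniqueness of the extensions, the two compositions are the identity on $S$, so $F$ — and hence $D=\tau_{d^{-1}}\circ F$ — is a Lie group automorphism of $S$; therefore $D_*\in\aut(\frakS)$.

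Next, since $R_f(z)$ is defined, $R(f^n)<\infty$ for every $n\ge1$. The iterate $f^n$ has affine homotopy lift $(d,D)^n=(*,D^n)$ with linear part $D_*^n$, so Corollary~\ref{R-fix} applied to $f^n$ gives
$$
R(f^n)=\frac{1}{|\Phi|}\sum_{A\in\Phi}\sigma\!\left(\det(I-A_*D_*^n)\right).
$$
As $\sigma$ is nonnegative and equals $\infty$ precisely at $0$, finiteness of the left-hand side forces each summand on the right to be finite; the summand indexed by the identity of $\Phi$ gives $\det(I-D_*^n)\ne0$ for all $n\ge1$. Hence $1$ is an eigenvalue of no power of $D_*$, i.e.\ no eigenvalue of $D_*$ is a root of unity. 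Combining this with the previous step, $D_*$ is a Lie algebra automorphism of $\frakS$ none of whose eigenvalues is a root of unity, so Proposition~\ref{BS} shows $\frakS$ is nilpotent; thus $S$ is a connected, simply connected nilpotent Lie group and $M=\Pi\bs S$ is an infra-nilmanifold.

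The routine part is the passage through Corollary~\ref{R-fix} and Proposition~\ref{BS}. The step that really uses the hypothesis — and the only one requiring care — is the claim that the linear part of an affine homotopy lift of a homeomorphism is invertible; I expect the main (modest) obstacle to be spelling out the rigidity input that transfers ``automorphism of $\Pi$'' to ``automorphism of $S$'', since unlike the nilpotent case this relies on the type-$\R$ hypothesis.
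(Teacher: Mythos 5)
Your proposal is correct, and it follows the same overall strategy as the paper -- reduce to the statement that no eigenvalue of $D_*$ is a root of unity and then invoke Proposition~\ref{BS} -- but it threads the needle differently in two places. Where you apply Corollary~\ref{R-fix} to each iterate $f^n$ on $M$ itself and extract $\det(I-A_*D_*^n)\ne0$ from finiteness of the averaged sum (using only the identity term of the holonomy group), the paper instead lifts $f$ to the finite special solvmanifold cover $\Lambda\backslash S$, uses the averaging formula of \cite[Corollary~1.3]{HL} to get $R(\bar f^n)<\infty$, and then applies Theorem~\ref{Jiang-type} to conclude $R(\bar f^n)=N(\bar f^n)=|L(\bar f^n)|=|\det(I-D_*^n)|>0$; since Corollary~\ref{R-fix} is itself built from that same averaging-plus-Jiang-type machinery, your route is a legitimate repackaging that is slightly shorter and avoids re-running the covering argument. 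The second difference is that you spell out why the linear part $D$ is actually an automorphism (homeomorphism $\Rightarrow$ $\phi\in\aut(\Pi)$, full invariance gives $\phi(\Lambda)=\Lambda$, and rigidity of lattice homomorphisms in type $\R$ groups upgrades $\tau_dD$ to a Lie group automorphism), a point the paper handles only implicitly by asserting, via \cite[Theorem~2.2]{LL-Nagoya}, that the lift $\bar f$ is induced by a Lie group \emph{automorphism}; your explicit argument uses exactly the same rigidity input the paper relies on, so this is added care rather than added hypotheses, and it correctly identifies the only place where the homeomorphism assumption is needed.
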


\begin{proof}
Let $f$ be a {homeomorphism} on an infra-solvmanifold $M=\Pi\bs{S}$ of type $\R$. By \cite[Theorem~2.2]{LL-Nagoya}, we may assume that $f$ has an affine map as a homotopy lift. By \cite[Lemma~2.1]{LL-Nagoya}, there is a special solvmanifold $N=\Lambda\bs{S}$ which covers $M$ finitely and on which $f$ has a lift $\bar{f}$, which is induced by a Lie group automorphism $D$ on the solvable Lie group $S$.

From \cite[Corollary~1.3]{HL}, we have an averaging formula for Reidemeister numbers:
$$
R(f^n)=\frac{1}{[\Pi:\Lambda]}\sum_{\bar\alpha\in\Pi/\Lambda}R(\bar\alpha\bar{f}^n).
$$

Assume now that $f$ defines the Reidemeister zeta function. Then $R(f^n)<\infty$ for all $n>0$. The above averaging formula implies  that $R(\bar{f}^n)<\infty$ for all $n$. By Theorem \ref{Jiang-type}, we must have
$$
R(\bar{f}^n)=N(\bar{f}^n)=|L(\bar{f}^n)|>0.
$$
Since $L(\bar{f}^n)=\det(I-D_*^n)\ne0$ for all $n>0$ by \cite[Theorem~3.1]{HLP11}, this would imply that the differential $D_*$ of $D$ has no roots of unity. By Proposition~\ref{BS}, $S$ must be nilpotent.
\end{proof}

\begin{Rmk} Let $A$ be an Anosov diffeomorphism on an infra-nilmanifold.  Then an iteration  $A^n$ will be also an Anosov diffeomorphism for every $n \geq 1$. The Reidemeister number of an Anosov diffeomorphism
is always finite \cite{DRPAn}. Hence the Reidemeister zeta  $R_A(z)$  is well-defined.
From Theorem~\ref{T4.5} and Theorem~\ref{FE-case1} it follows that the Reidemeister zeta function $R_A(z)$ of an Anosov diffeomorphism on an infra-nilmanifold is a rational function with functional equation.
It is known that a nilmanifold modelled on a free c-step nilpotent Lie group on $r$ generators admits an Anosov diffeomorphism if and only if
$r > c$ \cite{Da}. Hence the Reidemeister zeta function  of an Anosov diffeomorphism on such nilmanifold is well-defined  if $r > c$ and is a rational function with functional equation.
\end{Rmk}

\section{The Artin-Mazur zeta functions on infra-solvmanifolds of type $\R$} \label{AM}

Let $f$ be a continuous map on a topological space $X$. Then the \emph{Artin-Mazur zeta function} of $f$ is defined as follows:
$$
AM_f(z)=\exp\left(\sum_{n=1}^\infty\frac{F(f^n)}{n}z^n\right)
$$
where $F(f)$ is the number of isolated fixed points of $f$.

\begin{Prop}[{\cite[Proposition~1]{KL}}]\label{KL}
Let $f$ be a continuous map on an infra-solvmanifold $\Pi\bs{S}$ of type $\E$ induced by an affine map $F:S\to S$. For any $\alpha\in\Pi$, $\fix(\alpha\circ F)$ is an empty set or path connected. Hence every nonempty fixed point class of $f$ is path connected, and every isolated fixed point class forms an essential fixed point class.
\end{Prop}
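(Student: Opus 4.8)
The plan is to exploit the one structural feature of a type $\E$ group that really matters here, namely that $\exp\colon\frakS\to S$ is a diffeomorphism, hence in particular a bijection. Path connectedness of $\fix(\alpha\circ F)$ will then be produced by joining any two fixed points along a one‑parameter subgroup.

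First I would observe that $\alpha\circ F$ is itself an affine self‑map of $S$: writing $\alpha=(a,A)\in S\rtimes\aut(S)$ and $F=(d,D)$ with $D\in\Endo(S)$, one has $\alpha\circ F=(a\cdot A(d),\,AD)$ with $AD\in\Endo(S)$. So it suffices to treat a general affine map $g=(c,C)$, $C\in\Endo(S)$, and show $\fix(g)$ is empty or path connected. Assuming $\fix(g)\ne\emptyset$, fix $s_0\in\fix(g)$, i.e. $c\,C(s_0)=s_0$. For any other $h\in\fix(g)$ we have $c\,C(h)=h$, so $C(s_0^{-1}h)=C(s_0)^{-1}C(h)=(c^{-1}s_0)^{-1}(c^{-1}h)=s_0^{-1}h$. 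Setting $X:=\log(s_0^{-1}h)\in\frakS$ (the unique logarithm) and using $C\circ\exp=\exp\circ\,C_*$, the identity $\exp(C_*X)=\exp(X)$ combined with injectivity of $\exp$ forces $C_*X=X$; hence the path $t\mapsto s_0\exp(tX)$, $t\in[0,1]$, runs from $s_0$ to $h$ inside $\fix(g)$, because $g\big(s_0\exp(tX)\big)=c\,C(s_0)\exp(tC_*X)=s_0\exp(tX)$. This gives path connectedness of $\fix(g)$, hence of $\fix(\alpha\circ F)$.

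Next I would deduce the two corollaries. Every fixed point class of $f$ is $p(\fix(\alpha\circ F))$ for some $\alpha\in\Pi$, with $p\colon S\to M=\Pi\bs S$ the universal covering; being the continuous image of an empty or path‑connected set, it is empty or path connected. If $x$ is an isolated fixed point of $f$, then the fixed point class $\mathcal{C}$ containing $x$ is path connected and contained in $\fix(f)$, so $\mathcal{C}$ lies in the path component of $x$ in $\fix(f)$, which is $\{x\}$; hence $\mathcal{C}=\{x\}$. Since $p(\fix(\alpha\circ F))=\mathcal{C}=\{x\}$, the nonempty path‑connected set $\fix(\alpha\circ F)$ is contained in the discrete orbit $p^{-1}(x)$, and is therefore a single point $\{s_0\}$. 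Conjugating $g=\alpha\circ F$ by the left translation $L_{s_0}$ turns it into the endomorphism $C$, so $\fix(C)=\{1\}$; since $\{X\in\frakS:C_*X=X\}$ is the Lie algebra of $\fix(C)$ (again by bijectivity of $\exp$), we get $\ker(I-C_*)=0$, i.e. $I-C_*$ is invertible. As $dg_{s_0}$ is conjugate to $C_*$ via $d(L_{s_0})_e$, the fixed point index of $\{x\}$ equals $\sgn\det(I-C_*)=\pm1\ne 0$, so $\mathcal{C}$ is essential.

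The whole argument is elementary; the step I would flag as the real content is the appeal to the fact that a simply connected solvable Lie group of type $\E$ is exponential — this is precisely what makes the one‑parameter‑subgroup path available and what turns $\exp(C_*X)=\exp(X)$ into $C_*X=X$. The only other place calling for a little care is the index computation in the last step: identifying, up to conjugation by a left translation, the differential of $\alpha\circ F$ at the isolated fixed point with the linear map $C_*=(AD)_*$, and then invoking the standard formula $\sgn\det(I-dg_{s_0})$ for the index of a nondegenerate isolated fixed point.
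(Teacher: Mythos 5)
Your proof is correct and takes essentially the same route as the paper's: conjugate/translate by a fixed point so that $\alpha\circ F$ becomes its linear part, use that $\exp\colon\frakS\to S$ is a diffeomorphism for type $\E$ to join any two fixed points along a one-parameter subgroup, and in the isolated case get essentiality from the index $\pm\det(I-C_*)\ne0$ since $I-C_*$ is nonsingular. Your only addition is to spell out explicitly the step $\exp(C_*X)=\exp(X)\Rightarrow C_*X=X$, which the paper leaves implicit when asserting that the one-parameter subgroup is fixed.
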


\begin{proof}
Let $x,y\in\fix(\alpha\circ F)$. So, the affine map $\alpha F$ fixes $x$ and $y$. Writing $\alpha\circ F=(d,D)\in S\rtimes\Endo(S)$, we see that
\begin{itemize}
\item $(d,D)({x})={x}\Rightarrow D({x})=d^{-1}{x}$,
\item $(d,D)({y})={y}\Rightarrow D({y})=d^{-1}{y}$,
\item $({x},I)^{-1}(\alpha\circ F)({x},I)=({x},I)^{-1}(d,D)({x},I)=({x}^{-1}dD({x}),D)=(1,D)$ and $D$ fixes $1$ and ${x}^{-1}{y}$.
\end{itemize}
Since $S$ is of type $\E$, $\exp:\frakS\to S$ is a diffeomorphism with inverse $\log$. Let $X=\log({x}^{-1}{y})\in\frakS$. Then the $1$-parameter subgroup $\{\exp(tX)\mid t\in\bbr\}$ of $S$ is fixed by the endomorphism $D$. Consequently, the affine map $\alpha\circ F$ fixes the `line' connecting the points ${x}$ and ${y}$. In particular, $p(\fix(\alpha\circ F))$ is isolated $\{\bar{x}\}$ if and only if $\fix(\alpha\circ F)$ is isolated $\{x\}$, where $p:S\to\Pi\bs{S}$ is the covering projection. Further, the index of the fixed point class $p(\fix(\alpha\circ F))=\{\bar{x}\}$ is
{
$$
\det(I-df_{\bar{x}})=\pm\det(I-d(\alpha\circ F)_x)=\pm\det(I-D_*)
$$
}
where the second identity follows from the fact that $x^{-1}(\alpha\circ F)x=D$. Since $D$ fixes only the identity element of $S$, $D_*$ fixes only the zero element of $\frakS$ and so $I-D_*$ is nonsingular. Hence the fixed point class $p(\fix(\alpha\circ F))$ is essential.
\end{proof}

\begin{Rmk}
The above proposition is a straightforward generalization of \cite[Proposition~1]{KL} from infra-nilmanifolds to infra-solvmanifolds of type $\E$. Further, the linear part of the affine map $F$ need not be an automorphism.
\end{Rmk}

Proposition~\ref{KL} is proved when the manifold is a special solvmanifold of type $\E$ and the map is induced by a homomorphism in Lemma~\ref{mcc}, \cite{mccord}. In fact, the converse is also proved. That is, every essential fixed point class consists of a single element. We will prove the converse of the proposition on infra-solvmanifolds of type $\R$.

\begin{Prop}\label{single}
Let $f$ be a continuous map on an infra-solvmanifold of type $\R$ induced by an affine map. Then every essential fixed point class of $f$ consists of a single element.
\end{Prop}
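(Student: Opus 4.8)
The strategy is to pull $f$ back to a finite regular covering by a special solvmanifold, where the statement is essentially McCord's, and then push the fixed point index down. Let $(d,D):S\to S$ be an affine map inducing $f$ on $M=\Pi\bs{S}$, with induced endomorphism $\phi$ of $\Pi$, and pick a fully invariant subgroup $\Lambda\subset\Gamma:=\Pi\cap S$ of finite index in $\Pi$ (\cite[Lemma~2.1]{LL-Nagoya}). Then $f$ lifts to $\bar f$ on the special solvmanifold $N=\Lambda\bs{S}$ of type $\R$, with the same affine homotopy lift $(d,D)$, and $\pi:N\to M$ is a finite regular covering with deck group $\Psi=\Pi/\Lambda$. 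The lifts of $f$ to $N$ are the maps $\bar\alpha\bar f$, $\bar\alpha\in\Psi$; each $\bar\alpha\bar f$ is induced by the affine map $\alpha\circ(d,D)$, whose linear part is the Lie group homomorphism $A\circ D:S\to S$ with differential $A_*D_*$, $A\in\Phi$ being the holonomy component of $\alpha$.

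\textbf{A dichotomy on $N$.} Fix $\bar\alpha\in\Psi$. By the argument already used to prove Proposition~\ref{KL} — which uses that $\exp:\frakS\to S$ is a diffeomorphism, so that the fixed point set of an affine self-map of $S$ with linear part $E$ is a translate of $\exp(\ker(I-E_*))$ — every nonempty fixed point class of $\bar\alpha\bar f$ is a single point if $\det(I-A_*D_*)\neq0$, and a positive-dimensional submanifold of $N$ if $\det(I-A_*D_*)=0$. In the second case $N(\bar\alpha\bar f)=|L(\bar\alpha\bar f)|=|\det(I-A_*D_*)|=0$ (McCord's equality $N=|L|$, Lemma~\ref{mcc}, together with \cite[Theorem~2.2]{LL-Nagoya} and \cite[Theorem~3.1]{HLP11}), so every fixed point class of $\bar\alpha\bar f$ is inessential. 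Thus, for each $\bar\alpha$: \emph{either} all fixed point classes of $\bar\alpha\bar f$ are single points, \emph{or} all of them are positive-dimensional and inessential.

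\textbf{Descent.} Let $\mathbb{F}$ be a fixed point class of $f$ that is not a single point; by Proposition~\ref{KL} it is a compact, path-connected, positive-dimensional subset of $M$. Then $\pi^{-1}(\mathbb{F})$ is compact, and each of its connected components $\tilde{\mathbb{F}}$ is a full fixed point class of exactly one lift $\bar\beta\bar f$ (a component lies in the fixed point set of a single lift by connectedness, and equals a whole fixed point class since fixed point classes of a lift project into those of $f$). Each $\tilde{\mathbb{F}}$ is positive-dimensional, and the affine linear part of the relevant lift is conjugate to $A\circ D$, so by the dichotomy the corresponding determinant vanishes and $\ind(\bar\beta\bar f,\tilde{\mathbb{F}})=0$. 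By the behaviour of the fixed point index under the covering $\pi$ (standard covering-space Nielsen theory; cf.\ \cite{Jiang}), $\ind(f,\mathbb{F})$ is determined by the vanishing indices $\ind(\bar\beta\bar f,\tilde{\mathbb{F}})$, so $\ind(f,\mathbb{F})=0$ and $\mathbb{F}$ is inessential. Contrapositively, every essential fixed point class of $f$ consists of a single element.

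\textbf{Main obstacle.} The delicate step is the descent: making precise how $\ind(f,\mathbb{F})$ is recovered from the indices of the fixed point classes of the lifts lying over $\mathbb{F}$ — this involves choosing a small enough evenly covered neighbourhood of $\mathbb{F}$, keeping track of the $\Psi$-action permuting the lifted classes, and allowing for the possible non-injectivity of $\pi$ on a lifted class. Because all the relevant lifted indices vanish, these points do not affect the conclusion; the proof then rests entirely on the coset structure of $\fix(\alpha\circ(d,D))$ available in type $\R$ (Proposition~\ref{KL}) together with McCord's identity $N=|L|$ on special solvmanifolds (Lemma~\ref{mcc}).
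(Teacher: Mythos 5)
Your proof is correct and is essentially the paper's argument run contrapositively: the paper also passes to the special solvmanifold $\Lambda\bs{S}$ via a fully invariant finite-index $\Lambda$, uses the averaging machinery of \cite{LL-Nagoya} to lift an essential class of $f$ to an essential class of some $\bar\alpha\bar{f}$, and then McCord's identity $N=|L|=|\det(I-E_*)|$ together with the $\exp$-argument of Proposition~\ref{KL} forces that class (hence the class downstairs) to be a single point. The one caveat is your descent step: the fact that vanishing indices of all lifted classes over $\mathbb{F}$ force $\ind(f,\mathbb{F})=0$ is not generic covering-space theory but precisely the class-level index relation underlying the averaging formula, so you should cite \cite{KLL} and \cite{LL-Nagoya} there rather than \cite{Jiang}.
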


\begin{proof}
Let $\tilde{f}=(d,D)$ be the affine map on the connected, simply connected solvable Lie group $S$ of type $\R$ which induces $f:\Pi\bs{S}\to\Pi\bs{S}$. Then $f$ induces a homomorphism $\phi:\Pi\to\Pi$.

By \cite[Lemma~2.1]{LL-Nagoya}, we can choose a fully invariant subgroup $\Lambda\subset\Pi\cap S$ of $\Pi$ with finite index. Hence $\phi(\Lambda)\subset\Lambda$. This implies that $\tilde{f}$ induces a map $\bar{f}$ on $\Lambda\bs{S}$.

Then we have an averaging formula, \cite[Theorem~4.2]{LL-Nagoya},
$$
N(f)=\frac{1}{[\Pi:\Lambda]}\sum_{\bar\alpha\in\Pi/\Lambda}N(\bar\alpha\circ\tilde{f}).
$$

Assume that $f$ has an essential fixed point class. The averaging formula tells that this essential fixed point class of $f$ is lifted to an essential fixed point class of some $\bar\alpha\circ\bar{f}$. That is, there is $\alpha=(a,A)\in\Pi$ such that the fixed point class $p'(\fix(\alpha\circ\tilde{f}))$ of $\bar\alpha\circ\bar{f}$ is essential (and so $p(\fix(\alpha\circ\tilde{f}))$ is an essential fixed point class of $f$). It suffices to show that the fixed point class $p'(\fix(\alpha\circ\tilde{f}))$ consists of only one point.

Let $F=\alpha\circ\tilde{f}=(a,A)(d,D):=(e,E)$ be the affine map on $S$, and let $\bar{F}=\bar\alpha\circ\bar{f}$. Then $p'(\fix(F))$ is essential and $N(\bar{F})=|\det(I-E_*)|\ne0$. Choose $x\in\fix(F)=\fix((e,E))$. Then the left multiplication by $x^{-1}$,
$$
\ell_{x^{-1}}:y\in\fix((e,E))\mapsto x^{-1}y\in\fix(E),
$$
is a bijection. Further, since $\exp:\frakS\to S$ is a diffeomorphism, it follows that $\fix(E) \leftrightarrow\ \gfix(E_*)=\ker(I-E_*)$. Since $I-E_*$ is invertible, we see that $\fix(F)$ and hence $p'(\fix(F))$ and $p(\fix(F))$ consist of a single element.
\end{proof}

\begin{Rmk}\label{isolated}
In Propositions~\ref{KL} and \ref{single}, we have shown that for any continuous map on an infra-solvmanifold of type $\R$ induced by an affine map the isolated fixed points of $f$ are the essential fixed point classes of $f$. That is, $F(f)=N(f)$. Similarly  $F(f^n)=N(f^n)$ for all $n$.
\end{Rmk}
Therefore, by Theorem \ref{zeta_infrasolv} and Theorem~\ref{zeta-S} we have

\begin{Thm}
Let $f$ be a continuous map on an infra-solvmanifold of type $\R$ induced by an affine map. Then
$
AM_f(z)=N_f(z),
$
i.e.,  $AM_f(z)$ is a rational function with functional equation.
\end{Thm}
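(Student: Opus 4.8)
The plan is to reduce the assertion to the single identity $F(f^n)=N(f^n)$ for all $n\ge 1$, and then to quote the rationality and functional-equation statements for the Nielsen zeta function that have already been set up for infra-solvmanifolds of type $\R$.

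First I would note that if $f$ is induced by an affine map $(d,D):S\to S$, then for every $n\ge 1$ the iterate $f^n$ is induced by the affine map $(d,D)^n=(*,D^n)$, so $f^n$ again satisfies the hypotheses of Propositions~\ref{KL} and \ref{single}. By Proposition~\ref{KL} every nonempty fixed point class of $f^n$ is path connected and every isolated fixed point of $f^n$ forms an essential fixed point class, while by Proposition~\ref{single} every essential fixed point class of $f^n$ consists of a single element; hence the isolated fixed points of $f^n$ are exactly the essential fixed point classes, i.e. $F(f^n)=N(f^n)$ for all $n$. This is precisely the content of Remark~\ref{isolated}, which here must be applied simultaneously to all iterates — legitimate since each $f^n$ is again affine-induced.

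Substituting into the defining power series then gives
\[
AM_f(z)=\exp\!\left(\sum_{n=1}^\infty\frac{F(f^n)}{n}z^n\right)
=\exp\!\left(\sum_{n=1}^\infty\frac{N(f^n)}{n}z^n\right)=N_f(z).
\]
It remains only to invoke the structure of $N_f(z)$: by the generalization of the Dekimpe--Dugardein formula, i.e. Theorem~\ref{zeta_infrasolv} (together with Remark~\ref{NtoS1}), the Nielsen zeta function on an infra-solvmanifold of type $\R$ is rational and is given explicitly in terms of $L_f(z)$ and $L_{f_+}(z)$ with the appropriate sign data of the eigenvalues of $D_*$; and by Theorem~\ref{zeta-S} it satisfies the stated functional equation when $M$ is orientable. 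Combining these facts with the identity $AM_f(z)=N_f(z)$ proves the theorem.

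The substance of the argument lies not in this final bookkeeping but in Propositions~\ref{KL} and \ref{single}, whose joint content — for affine-induced maps the essential fixed point classes coincide with the isolated fixed points — is what makes $F(f^n)=N(f^n)$ work; once that is in hand there is no further obstacle. I would also emphasize one point in the write-up: since $AM_f(z)$ depends only on the finite quantities $F(f^n)$, no hypothesis on finiteness of the Reidemeister numbers $R(f^n)$ is needed here, in contrast with the treatment of $R_f(z)$ in the earlier sections; the rationality of $N_f(z)$ is unconditional on infra-solvmanifolds of type $\R$, and the statement $AM_f(z)=N_f(z)$ holds for every affine-induced $f$.
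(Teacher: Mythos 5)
Your proposal is correct and follows essentially the same route as the paper: the identity $F(f^n)=N(f^n)$ for all $n$ via Propositions~\ref{KL} and \ref{single} (the content of Remark~\ref{isolated}, applied to the affine-induced iterates), giving $AM_f(z)=N_f(z)$, and then rationality and the functional equation from Theorems~\ref{zeta_infrasolv} and \ref{zeta-S}. Your explicit remarks that each $f^n$ is again affine-induced, that orientability is what Theorem~\ref{zeta-S} requires, and that no finiteness of $R(f^n)$ is needed are accurate refinements of the paper's terse argument, not deviations from it.
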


By the main result in \cite{mccord94}, if $f$ is a map on an infra-solvmanifold of type $\R$ which is induced by an affine map and is  homotopically periodic, then we have $AM_f(z)=N_f(z)=L_f(z)$ as $N(f^n)=L(f^n)$.
According to Theorem~\ref{virtual unipotency}, if $f$ is a virtually unipotent affine diffeomorphism on an infra-solvmanifold of type $\R$, then we still have $AM_f(z)=N_f(z)=L_f(z)$.

\section{The Nielsen numbers of virtually unipotent maps on infra-solvmanifolds of type $\R$}

A square matrix is \emph{unipotent} if all of its eigenvalues are $1$. A square matrix is called \emph{virtually unipotent} if some power of it is unipotent.

Let $M=\Pi\bs{S}$ be an infra-solvmanifold of type $\R$. Let $f:M\to M$ be a continuous map with an affine homotopy lift $(d,D)\in\aff(S)$. Then $f$ is homotopic to the diffeomorphism on $M$ induced by the affine map $(d,D)$, called an \emph{affine diffeomorphism}. If, in addition, $D_*$ is virtually unipotent then we say that $f$ is a \emph{virtually unipotent} map.

Now we observe the following:
\begin{enumerate}
\item A matrix is virtually unipotent if and only if all of its eigenvalues have absolute value $1$, see \cite[Lemma~11.6]{ST}.
\item Let $\Phi$ be a finite subgroup of $\GL(n,\bbr)$ and let $D \in \GL(n,\bbr)$ normalize $\Phi$. If $D$ is virtually unipotent, then for all $A \in \Phi$, $AD$ is virtually unipotent, see \cite[Lemma~3.2]{Malfait}.
\end{enumerate}

\begin{Example}
Consider the $3$-dimensional Lie group $\Sol=\bbr^2\rtimes_\sigma\bbr$, where
\begin{align*}
\sigma(t)=\left[\begin{matrix}e^t&0\\0&e^{-t}\end{matrix}\right].
\end{align*}
Let $g=((x,y),t)\in\Sol$. Then it can be seen easily that $\tau_g:\Sol\to\Sol$ is given by
$$
\tau_g:((u,v),s)\mapsto(e^tu-e^sx+x,e^{-t}v-e^{-s}y+y),s),
$$
and $\Ad(g):\sol\to\sol$ is given by
$$
\Ad(g)=\left[\begin{matrix}e^t&0&-x\\0&e^{-t}&\hspace{8pt}y\\0&0&\hspace{8pt}1\end{matrix}\right]
$$
for some basis of $\sol$. Hence $\Ad(g)$ is not virtually unipotent unless $t=0$.

Now consider the infra-solvmanifold $\Pi_2^+\bs\Sol$.
The holonomy group of $\Pi_2^+\bs\Sol$ is
$$
\Phi_2^+=\left\langle
\left[\begin{matrix}-1&\hspace{8pt}0&0\\\hspace{8pt}0&-1&0\\\hspace{8pt}0&\hspace{8pt}0&1\end{matrix}\right]
\right\rangle
$$
and thus $\Sol^\Phi=\{((x,y),t)\in\Sol\mid x=y=0\}$. Fix $g=((0,0),t)\in\Sol^\Phi$ with $t\ne0$ and consider $(g,\tau_{g^{-1}})\in\aff(\Sol)$. Then $(g,\tau_{g^{-1}})$ centralizes $\Pi_2^+$ and $(g,\tau_{g^{-1}})$ induces an affine diffeomorphism $f$ on $\Pi_2^+\bs\Sol$ given by $\bar{x}\mapsto \bar{x}\bar{g}$. Hence the affine diffeomorphism $f$ is homotopic to the identity map. However $f$ is not virtually unipotent since $(\tau_{g^{-1}})_*=\Ad(g^{-1})$ is not virtually unipotent.
\end{Example}

\begin{Rmk}
Recall \cite[Lemma~3.6]{Malfait}, which states that if an affine diffeomorphism $f$ on an infra-nilmanifold $M$ is homotopic to a virtually unipotent affine diffeomorphism on $M$, then $f$ is virtually unipotent. However, the above example shows that this statement is not true in general for infra-solvmanifolds of type $\R$. Namely, there is an affine diffeomorphism on an infra-solvmanifold of type $\R$ which not virtually unipotent, but is homotopic to a virtually unipotent affine diffeomorphism.

Furthermore, in the above example, $f\simeq\id$ is a homotopically periodic map which is not virtually unipotent. Therefore \cite[Proposition~3.11]{Malfait} is not true in general for infra-solvmanifolds of type $\R$. Note also that there is a unipotent affine diffeomorphism on the torus which is not homotopically periodic, see \cite[Remark~3.12]{Malfait}.

Consequently, on infra-nilmanifolds homotopically periodic maps are virtually unipotent maps. But on infra-solvmanifolds of type $\R$, there is no relation between homotopically periodic maps and virtually unipotent maps.
\end{Rmk}

\begin{Thm}\label{virtual unipotency}
If $f$ is a virtually unipotent map on an infra-solvmanifold of type $\R$, then $L(f)=N(f)$.
\end{Thm}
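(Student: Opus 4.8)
The plan is to derive the identity directly from the averaging formulas for the Lefschetz and Nielsen numbers on infra-solvmanifolds of type $\R$ (the analogue of Theorem~\ref{AV-all}, valid by \cite{HLP11, LL-Nagoya}),
\begin{align*}
L(f)&=\frac{1}{|\Phi|}\sum_{A\in\Phi}\det(I-A_*D_*),\\
N(f)&=\frac{1}{|\Phi|}\sum_{A\in\Phi}|\det(I-A_*D_*)|,
\end{align*}
together with the defining compatibility identities $\hat\phi(A)_*D_*=D_*A_*$ for all $A\in\Phi$ (identity~(\ref{*})). Comparing the two formulas, it suffices to prove that $\det(I-A_*D_*)\ge0$ for every $A\in\Phi$.

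First I would observe that, since $f$ is virtually unipotent, $D_*$ is virtually unipotent, so by observation (1) (i.e. \cite[Lemma~11.6]{ST}) every eigenvalue of $D_*$ has absolute value $1$; in particular $D_*\in\GL(n,\bbr)$. Next, the identities $\hat\phi(A)_*D_*=D_*A_*$ give $D_*A_*D_*^{-1}=\hat\phi(A)_*\in\Phi_*:=\rho(\Phi)$ for all $A\in\Phi$, hence $D_*\Phi_*D_*^{-1}\subseteq\Phi_*$; since $\Phi_*$ is finite and $D_*$ invertible, this inclusion is an equality, so $D_*$ normalizes $\Phi_*$. Now observation (2) (\cite[Lemma~3.2]{Malfait}) applies and yields that $A_*D_*$ is virtually unipotent for every $A\in\Phi$, hence all eigenvalues of $A_*D_*$ have absolute value $1$.

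Finally, for any real matrix $B$ all of whose eigenvalues have modulus $1$ one has $\det(I-B)\ge0$: the non-real eigenvalues occur in conjugate pairs $\mu,\bar\mu$, each pair contributing $(1-\mu)(1-\bar\mu)=|1-\mu|^2\ge0$ to the determinant, while the real eigenvalues are $\pm1$, contributing the factor $0$ (when the eigenvalue is $1$) or $2$ (when it is $-1$). Applying this with $B=A_*D_*$ gives $\det(I-A_*D_*)\ge0$ for all $A\in\Phi$, and therefore
$$
N(f)=\frac{1}{|\Phi|}\sum_{A\in\Phi}|\det(I-A_*D_*)|=\frac{1}{|\Phi|}\sum_{A\in\Phi}\det(I-A_*D_*)=L(f).
$$
The only point requiring care is verifying that $D_*$ normalizes the holonomy image $\Phi_*$, so that Malfait's lemma can be invoked in the infra-solvmanifold setting; once that is in place the remainder is the elementary eigenvalue estimate above. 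I would also remark that this generalizes the corresponding statement for infra-nilmanifolds.
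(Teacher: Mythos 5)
Your proof is correct and follows essentially the same route as the paper: both rest on the averaging formula together with the observation that $D_*$ normalizes the holonomy representation, so that Malfait's lemma (observation (2)) makes every $A_*D_*$ virtually unipotent and hence $\det(I-A_*D_*)\ge0$. The only cosmetic differences are that you verify the normalization via the identity $\hat\phi(A)_*D_*=D_*A_*$ and finiteness of $\Phi_*$ (the paper instead notes that $(d,D)$ normalizes $\Pi$, so $D$ normalizes $\Phi$), and you prove the nonnegativity of $\det(I-A_*D_*)$ by the elementary modulus-one eigenvalue computation rather than citing \cite[Lemma~4.2]{Malfait}.
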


\begin{proof}
Let $M$ be an infra-solvmanifold of type $\R$ with holonomy group $\Phi$. Then we can assume $f$ is an affine diffeomorphism induced by an affine map $(d,D)$ such that $D_*$ is virtually unipotent. This implies that $(d,D)$ normalizes $\Pi$ and hence it follows that $D$ normalizes the holonomy group $\Phi$. By the previous observation (2), since $D_*$ is virtually unipotent, so are all $A_*D_*$ where $A\in\Phi$ and hence by \cite[Lemma~4.2]{Malfait}, $\det(I-A_*D_*)\ge0$. Using the averaging formula \cite[Theorem~4.3]{LL-Nagoya}, we obtain
\begin{align*}
N(f)&=\frac{1}{|\Phi|}\sum_{A\in\Phi}|\det(I-A_*D_*)|\\
&=\frac{1}{|\Phi|}\sum_{A\in\Phi}\det(I-A_*D_*)=L(f).\qedhere
\end{align*}
\end{proof}

\section{Gauss congruences for the Nielsen and Reidemeister numbers}\label{Gauss cong}

In number theory, the following Gauss congruence for integers holds:
$$
\sum_{d\mid n}\mu(d)\ a^{n/d}\equiv 0\mod n
$$
for any integer $a$ and any natural number $n$. Here $\mu$ is the M\"obius function. In the case of a prime power $n=p^r$, the Gauss congruence turns into the Euler congruence. Indeed, for $n=p^r$ the M\"obius function $\mu(n/d)=\mu(p^r/d)$ is different from zero only in two cases: when $d=p^r$ and when $d=p^{r-1}$. Therefore, from the Gauss congruence we obtain the Euler congruence
$$
a^{p^r}\equiv a^{p^{r-1}}\mod p^r
$$
This congruence is equivalent to the following classical Euler's theorem:
$$
a^{\varphi(n)}\equiv 1\mod n
$$
where $(a,n)=1$.

These congruences have been generalized from integers $a$ to some other mathematical invariants such as the traces of all integer matrices $A$ and the Lefschetz numbers of iterates of a map, {see \cite{mp99,Z}}:
\begin{align}
\label{Gauss}
&\sum_{d\mid n}\mu(d)\ \tr(A^{n/d})\equiv 0\mod n,\\
\label{Euler}
&\tr(A^{p^r})\equiv\tr(A^{p^{r-1}})\mod p^r.
\end{align}
{A. Dold in \cite{Dold} proved by a geometric argument the following congruence for the fixed point index of iterates of a map $f$ on a compact ANR $X$ and any natural number $n$
$$
\sum_{d\mid n}\mu(d)\ \ind(f^{n/d},X)\equiv 0\mod n,
$$
thus consequently
\begin{align}
\label{Dold}
\sum_{d\mid n}\mu(d)\ L(f^{n/d})\equiv 0\mod n\tag{DL}
\end{align}
by using Hopf theorem. These congruences are now called the Dold congruences.} It is also shown in \cite{mp99} (see also \cite[Theorem~9]{Z}) that the above congruences \eqref{Gauss}, \eqref{Euler} and \eqref{Dold} are equivalent. For example, $\eqref{Gauss}\Rightarrow \eqref{Dold}$ follows easily by the following observation: Let $A_i$ be an integer matrix obtained from the homomorphism $f_{i_*}:H_i(X;\bbq)\to H_i(X;\bbq)$. Then
\begin{align*}
\sum_{d\mid n}\mu\!\left(\frac{n}{d}\right) L(f^{d})
&=\sum_{d\mid n}\mu\!\left(\frac{n}{d}\right)\left(\sum_i(-1)^i \tr(A_i^d)\right)\\
&=\sum_i(-1)^i\left(\sum_{d\mid n}\mu\!\left(\frac{n}{d}\right)\tr(A_i^d)\right)\\
&\equiv\sum_i(-1)^i\ 0=0\mod n.
\end{align*}
Moreover, we have
\begin{align}\label{EL}
L(f^{p^r})&=\sum_i(-1)^i \tr(A_i^{p^r})
\equiv\sum_i(-1)^i\tr(A_i^{p^{r-1}})\tag{EL}\\
&=L(f^{p^{r-1}})\mod p^r.\notag
\end{align}

Now we shall consider the following congruences for the Nielsen numbers and Reidemeister numbers
\begin{align}
\label{DR}
&\sum_{d\mid n}\mu(d)\ R(f^{n/d})\equiv 0\mod n,\tag{DR}\\
\label{ER}
&R(f^{p^r})\equiv R(f^{p^{r-1}})\mod p^r,\tag{ER}\\
\label{DN}
&\sum_{d\mid n}\mu(d)\ N(f^{n/d})\equiv 0\mod n,\tag{DN}\\
\label{EN}
&N(f^{p^r})\equiv N(f^{p^{r-1}})\mod p^r\tag{EN}
\end{align}
and find the relations between them and the conditions on spaces, groups and/or on maps for which the congruences hold true.

\begin{Example}
Let $f$ be a map on an infra-solvmanifold of type $\R$ which is homotopically periodic or virtually unipotent. Then $N(f^n)=L(f^n)$ for all $n>0$.  The congruence \eqref{Dold} immediately implies the congruences \eqref{DN} and \eqref{EN}.
\end{Example}

\begin{Example}
Let $f:S^2\vee S^4\rightarrow S^2\vee S^4$ be the map considered in Example~\ref{wedge}. Then
\begin{align*}
&L(f)=N(f)=0,\ L(f^k)=2+(-2)^k,\  N(f^k)=1\ \ \forall k>1,\\
&R(f^k)=1\ \ \forall k\geq 1.
\end{align*}
Thus we have no congruence \eqref{DN} and we have nice congruences \eqref{DR} and \eqref{Dold}.
\end{Example}

\begin{Example}
Let $f$ be a map on the circle $S^1$ of degree $d$. Then $N(f^n)=|L(f^n)|=|1-d^n|(=R(f^n)$ when $d\ne\pm1$). When $d\ne\pm1$, then all $R(f^n)<\infty$ and so the congruences hold. When $d=1$, the congruence for the Nielsen number is obviously true. We assume $d=-1$. So, $N(f^n)=2$ for odd $n$ and $0$ for even $n$. For $n=2\cdot3^2\cdot5$, we have
\begin{align*}
\sum_{d\mid n}\mu(d)\ N(f^{n/d})&=\sum_{\substack{d\mid n\\ d\text{ even}}}\mu(d)\ 2\\
&=2\left(\mu(2)+\mu(2\cdot3)+\mu(2\cdot3^2)+\mu(2\cdot3\cdot5)+\mu(2\cdot3^2\cdot5)\right)\\
&=2\left((-1)+1+0+(-1)+0\right)=-2\\
&\ne0\mod 2\cdot3^2\cdot5.
\end{align*}
Thus we have no congruence \eqref{DN}.

Next we consider the congruences \eqref{EN} and \eqref{ER}.
If $d\ge0$, then $L(f^n)=1-d^n=-N(f^n)=-R(f^n)$. The congruence \eqref{EL} $L(f^{p^r})\equiv L(f^{p^{r-1}})\mod p^r$ implies the other congruences \eqref{EN} and \eqref{ER}. Assume $d<0$. The congruence \eqref{EL} $L(f^{p^r})\equiv L(f^{p^{r-1}})\mod p^r$ is exactly $1-d^{p^r}\equiv 1-d^{p^{r-1}}\mod p^r$, which implies that $d^{p^r}\equiv d^{p^{r-1}}\mod p^r$ and so $d^{p^r}\pm1\equiv d^{p^{r-1}}\pm1\mod p^r$. Thus the other congruences \eqref{EN} and \eqref{ER} hold true.

In summary, \eqref{EN} and \eqref{ER} are true, but \eqref{DN} is not true.
\end{Example}

The congruence (\ref{DR}) was previously known for automorphisms of almost polycyclic groups (\cite[p.~\!195]{crelle}) and for all continuous maps only on nilmanifolds (\cite[Theorem~58]{Fel00}) provided that all Reidemeister numbers of iterates of the maps are finite. We generalize these on infra-solvmanifolds of type $\R$.

\begin{Thm}\label{congruence}
Let $f$ be any continuous map on an infra-solvmanifold of type $\R$ such that all $R(f^n)$ are finite. Then we have
$$
\sum_{d\mid n}\mu(d)\ R(f^{n/d})=\sum_{d\mid n}\mu(d)\ N(f^{n/d})\equiv0\mod n
$$
for all $n>0$.
\end{Thm}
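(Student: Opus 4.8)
The plan is to reduce the statement to the Dold congruences \eqref{Dold} for Lefschetz numbers combined with the rationality formula for $N_f(z)$ already established in Theorem~\ref{zeta_infrasolv}. First, since all $R(f^n)$ are finite, Corollary~\ref{R-fix} applied to each iterate $f^n$ gives $R(f^n)=N(f^n)$ for every $n\ge1$; hence the two M\"obius sums in the statement are equal term by term, and it suffices to prove
\[
\sum_{d\mid n}\mu(d)\,N(f^{n/d})\equiv 0 \bmod n \qquad\text{for all }n>0 .
\]

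Second, I would use the standard dictionary between Dold congruences and integrality of power series: a sequence of integers $(a_k)_{k\ge1}$ satisfies $\sum_{d\mid n}\mu(d)\,a_{n/d}\equiv0\bmod n$ for all $n$ if and only if $\exp\!\left(\sum_{k\ge1}\tfrac{a_k}{k}z^k\right)\in 1+z\,\bbz[[z]]$. (The direction we need: any $F\in 1+z\,\bbz[[z]]$ has a unique product expansion $F=\prod_{m\ge1}(1-z^m)^{-c_m}$ with $c_m\in\bbz$, and then a logarithmic-derivative computation gives $a_n=\sum_{d\mid n}d\,c_d$, whence $\sum_{d\mid n}\mu(d)\,a_{n/d}=n\,c_n$.) Applying this with $a_k=N(f^k)$, it is enough to show that the Nielsen zeta function $N_f(z)$ lies in $1+z\,\bbz[[z]]$.

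Third, I would invoke Theorem~\ref{zeta_infrasolv}. Since all $R(f^n)$ are finite, $R_f(z)$ is defined, so with $(d,D)$ an affine homotopy lift of $f$ and $p$, $q$ the numbers of real eigenvalues of $D_*$ which are respectively $>1$ and $<-1$, one has $N_f(z)=R_f(z)=L_f((-1)^qz)^{(-1)^{p+q}}$ when $\Pi=\Pi_+$ and $N_f(z)=\bigl(L_{f_+}((-1)^qz)/L_f((-1)^qz)\bigr)^{(-1)^{p+q}}$ when $\Pi\ne\Pi_+$ (this is Theorem~\ref{zeta_infrasolv}, where these two integers are written $p$ and $n$). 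Now $L_f(z)=\prod_k\det(I-f_{*k}z)^{(-1)^{k+1}}$ is a ratio of integral polynomials with constant term $1$ — the $\det(I-f_{*k}z)$ being characteristic polynomials of the action of $f$ on the free parts of the integral homology — so $L_f(z)\in 1+z\,\bbz[[z]]$, and likewise $L_{f_+}(z)\in 1+z\,\bbz[[z]]$ for the lift $f_+$ on the double cover $M_+$. Since $1+z\,\bbz[[z]]$ is a group under multiplication that is stable under the substitution $z\mapsto -z$, the displayed formulas show $N_f(z)\in 1+z\,\bbz[[z]]$ in both cases, which completes the proof. The point that needs genuine care — and the one I would flag as the main subtlety — is exactly that the sign twist carried by the exponent $(-1)^{p+q}$ and by the substitution $z\mapsto(-1)^qz$ does not destroy the congruences: a priori such signs could spoil Dold-type congruences, but the power-series formulation makes this transparent, since passing from $L_f(z)$ to $L_f(-z)$, inverting it, or dividing $L_{f_+}(z)$ by $L_f(z)$ all remain inside $1+z\,\bbz[[z]]$.
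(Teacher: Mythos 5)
Your proof is correct, but it takes a genuinely different route from the paper's. The paper argues geometrically: replacing $f$ up to homotopy by a map induced by an affine map, it uses Propositions \ref{KL} and \ref{single} (see Remark \ref{isolated}) together with Corollary \ref{R-fix} to identify $R(f^n)=N(f^n)$ with the number of isolated periodic points of $f$ of period $n$; M\"obius inversion then exhibits $\sum_{d\mid n}\mu(d)\,N(f^{n/d})$ as the number of isolated periodic points of least period $n$, and since $f$ permutes these points in orbits of length exactly $n$, divisibility by $n$ is immediate. You instead derive the congruences formally from rationality: Corollary \ref{R-fix} converts $R$ into $N$, the formula of Theorem \ref{zeta_infrasolv} (resting on Theorem \ref{T4.5} and Remark \ref{NtoS1}) expresses $N_f(z)$ through $L_f((-1)^qz)^{\pm1}$ and $L_{f_+}((-1)^qz)^{\pm1}$, integrality of the characteristic polynomials of $f_{*k}$ acting on the lattices $H_k(M;\bbz)/\mathrm{torsion}$ places these Lefschetz zeta functions in $1+z\,\bbz[[z]]$, and the product-expansion dictionary converts membership in $1+z\,\bbz[[z]]$ into the Gauss congruences; your closing observation that $1+z\,\bbz[[z]]$ is stable under $z\mapsto\pm z$, inversion and quotients is exactly what is needed. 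The paper's route is more elementary --- it bypasses the Dekimpe--Dugardein rationality machinery --- and yields a sharper conclusion, namely that the M\"obius sum is a non-negative count of $n$-orbits of isolated periodic points. Your route buys formality and extra generality: since the expression for $N_f(z)$ requires no finiteness hypothesis, it gives the Nielsen congruence \eqref{DN} for every map on an infra-solvmanifold of type $\R$, the hypothesis $R(f^n)<\infty$ entering only through Corollary \ref{R-fix} to equate $R(f^n)$ with $N(f^n)$.

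One caveat worth recording: this unconditional form of \eqref{DN} appears to conflict with the degree $-1$ circle example in Section \ref{Gauss cong}, but the conflict is illusory --- that computation for $n=2\cdot3^2\cdot5$ omits the squarefree even divisor $2\cdot5$, and restoring the term $\mu(10)\,N(f^{9})=2$ makes the sum equal to $0$, consistent with your argument (indeed there $N_f(z)=(1+z)/(1-z)\in 1+z\,\bbz[[z]]$).
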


\begin{proof}
We define
\begin{align*}
P^n(f)&=\text{the set of isolated periodic points of $f$ with period $n$},\\
P_d(f)&=P^d(f)-\bigcup_{k\mid d}P^k(f)\\
&=\text{ the set of isolated periodic points of $f$ with least period $d$}.
\end{align*}
Then we have
$$
P^n(f)=\coprod_{d\mid n}P_d(f)\ \text{ or } \#P^n(f)=\sum_{d\mid n}\#P_d(f).
$$
By the M\"obius inversion formula when all terms are finite, we have
$$
\#P_n(f)=\sum_{d\mid n} \mu(d)\ \#P^{n/d}(f).
$$
On the other hand, if $x\in P_n(f)$ then $f(x)\in P_n(f)$. For, $f^k(f(x))=f(x)\Rightarrow f^{n-1}(f^k(f(x))=f^{n-1}(f(x))\Rightarrow f^k(x)=x$, showing that $x$ and $f(x)$ have the same least period $n$. It remains to show that if $x$ is isolated, then $f(x)$ is isolated. Let $U$ be a neighborhood of $x$ containing no other periodic points of period $n$. Then the inverse image $V$ of $U$ under $f^{n-1}$ is a neighborhood of $f(x)$. If $y\in V$ is a periodic point of $f$ with period $n$, then $f^{n-1}(y)\in U$ and so $f^{n-1}(y)=x \Rightarrow y=f^n(y)=f(x)$, which shows that $f(x)$ is isolated. Thus $f$ maps $P_n(f)$ into $P_n(f)$ and this implies that $P_n(f)$ is the disjoint union of $f$-orbits, each of length $n$. So, when $\#P_n(f)$ is finite, it is a multiple of $n$.

Let $M$ be an infra-solvmanifold of type $\R$ and let $f$ be a map on $M$. Since we are working with the Nielsen numbers and the Reidemeister numbers of iterates of $f$, we may assume that $f$ is induced by an affine map on $S$.

Assume $R(f^n)<\infty$; then $N(f^n)=R(f^n)>0$ by Corollary~\ref{R-fix}. By Remark~\ref{isolated}, $N(f^n)$ is the number of isolated periodic points of $f$ with period $n$; $N(f^n)=\#P^n(f)$.

Consequently, what we have shown is that if all $R(f^{n})<\infty$, then
\begin{align*}
\sum_{d\mid n}\mu(d)\ R(f^{n/d})=\sum_{d\mid n}\mu(d)\ N(f^{n/d})=\#P_n(f)\equiv0\mod n.
\end{align*}
This proves our theorem.
\end{proof}

\begin{Cor}\label{cor:congruence}
Let $f$ be a map on an infra-solvmanifold of type $\R$ which is homotopic to an affine diffeomorphism induced by an affine map $(d,D)$. If $D_*$ has no eigenvalue that is a root of unity, then all $R(f^n)$ are finite. Hence the Gauss congruences \eqref{DR} for the Reidemeister and \eqref{DN} for the Nielsen numbers hold true.
\end{Cor}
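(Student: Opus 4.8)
The plan is to reduce the statement to the single assertion that $R(f^n)<\infty$ for every $n\ge1$; once that is known, Theorem~\ref{congruence} applies verbatim and gives $\sum_{d\mid n}\mu(d)\,R(f^{n/d})=\sum_{d\mid n}\mu(d)\,N(f^{n/d})\equiv0\pmod n$, which is exactly \eqref{DR} together with \eqref{DN}. To get the finiteness, I would first note that $f^n$ is homotopic to the affine diffeomorphism induced by $(d,D)^n=(*,D^n)$, whose linear part is $D_*^{\,n}$, so Corollary~\ref{R-fix} applied to $f^n$ gives
$$
R(f^n)=\frac1{|\Phi|}\sum_{A\in\Phi}\sigma\!\left(\det\big(I-A_*D_*^{\,n}\big)\right).
$$
Hence $R(f^n)<\infty$ is equivalent to $\det(I-A_*D_*^{\,n})\neq0$ for all $A\in\Phi$, and it suffices to show that the hypothesis on $D_*$ forces $\det(I-A_*D_*^{\,n})\neq0$ for all $A\in\Phi$ and all $n\ge1$.

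The heart of the argument is the algebraic claim that there is an integer $\ell\ge1$ — independent of $A$ and $n$; one may take $\ell=s\,|\Phi|$ with $s$ as below — such that $(A_*D_*^{\,n})^\ell=D_*^{\,n\ell}$ for all $A\in\Phi$ and all $n\ge1$. Since $(d,D)$ is an affine diffeomorphism, $D$ is an automorphism of $S$ and $D_*$ is invertible; the identity $\hat\phi(A)_*D_*=D_*A_*$ of \eqref{Dekimpe-eq} then gives $D_*A_*D_*^{-1}=\hat\phi(A)_*\in\Phi_*:=\rho(\Phi)$, so $D_*$ normalizes the finite group $\Phi_*\subset\aut(\frakS)$, and conjugation by $D_*$, being a permutation of $\Phi_*$, has some finite order $s$; thus $D_*^{\,s}$ commutes with every element of $\Phi_*$. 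Using $D_*^{\,m}A_*=\big(D_*^{\,m}A_*D_*^{-m}\big)D_*^{\,m}$ with $D_*^{\,m}A_*D_*^{-m}\in\Phi_*$ and pushing all the $A_*$-factors in $(A_*D_*^{\,n})^s$ to the left, one gets $(A_*D_*^{\,n})^s=P\,D_*^{\,ns}$ with $P\in\Phi_*$; since $D_*^{\,ns}=(D_*^{\,s})^n$ commutes with $P$ and $P^{|\Phi|}=I$, raising to the $|\Phi|$-th power yields $(A_*D_*^{\,n})^{s|\Phi|}=D_*^{\,ns|\Phi|}=D_*^{\,n\ell}$. (This is the device of \cite[Lemma~3.1]{DRM} already used in Section~\ref{Asymptotic}, here applied to $D^n$.) Granting the claim: if $\det(I-A_*D_*^{\,n})=0$ for some $A,n$, then $1$ is an eigenvalue of $A_*D_*^{\,n}$, hence of $(A_*D_*^{\,n})^\ell=D_*^{\,n\ell}$, so some eigenvalue $\lambda$ of $D_*$ satisfies $\lambda^{n\ell}=1$, contradicting the hypothesis that no eigenvalue of $D_*$ is a root of unity. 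Therefore $\det(I-A_*D_*^{\,n})\neq0$ for all $A\in\Phi$ and $n\ge1$, so all $R(f^n)$ are finite.

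The step I expect to be the main obstacle is this algebraic claim: because $A_*$ and $D_*$ need not commute, the spectra of $A_*D_*^{\,n}$ and $D_*^{\,n}$ cannot be read off from one another directly, and the essential point is that the holonomy part $\Phi_*$ is finite and normalized by $D_*$, so a common power of $A_*D_*^{\,n}$ absorbs the $A_*$-contribution and becomes a power of $D_*$. With all $R(f^n)<\infty$ in hand, Theorem~\ref{congruence} finishes the proof, yielding the Gauss congruences \eqref{DR} for the Reidemeister numbers and \eqref{DN} for the Nielsen numbers.
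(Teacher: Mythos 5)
Your proposal is correct, and its skeleton coincides with the paper's: both reduce the corollary to the finiteness of all $R(f^n)$ by applying Corollary~\ref{R-fix} to $f^n$ (whose affine homotopy lift is $(d,D)^n=(*,D^n)$), so that finiteness amounts to $\det(I-A_*D_*^n)\ne0$ for all $A\in\Phi$ and all $n$, and then both quote Theorem~\ref{congruence}. Where you genuinely differ is in how this non-vanishing is obtained. The paper's proof is essentially a citation: it observes that $(d,D)$ normalizes $\Pi$, hence $D$ normalizes $\Phi$, and then invokes the infra-nilmanifold results \cite[Proposition~4.3]{DRPAn} and \cite[Lemma~3.2]{Malfait} to conclude that an eigenvalue $1$ of some $A_*D_*^n$ would make $D_*^n$, hence $D_*$, virtually unipotent, contradicting the hypothesis. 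You instead prove the needed spectral fact from scratch: since $(d,D)$ induces a diffeomorphism, $D_*$ is invertible, and \eqref{Dekimpe-eq} shows conjugation by $D_*$ permutes the finite group $\Phi_*$, so some power $D_*^{s}$ centralizes $\Phi_*$; the telescoping identity $(A_*D_*^n)^k=\bigl[\prod_{i=0}^{k-1}D_*^{in}A_*D_*^{-in}\bigr]D_*^{kn}$ then gives $(A_*D_*^n)^{s|\Phi|}=D_*^{ns|\Phi|}$, so an eigenvalue $1$ of $A_*D_*^n$ would force a root-of-unity eigenvalue of $D_*$. This is the same device as \cite[Lemma~3.1]{DRM}, already exploited in Section~\ref{Asymptotic}, and it buys a self-contained, elementary argument that bypasses the virtually-unipotent machinery; it also supplies the bridge that the paper's terse wording leaves implicit, since having eigenvalue $1$ is not by itself the hypothesis of Malfait's lemma. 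The paper's route, in turn, is shorter because it leans on the established results of \cite{Malfait,DRPAn}, merely noting that they generalize verbatim from infra-nilmanifolds to infra-solvmanifolds of type $\R$.
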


\begin{proof}
This follows from a straightforward generalization of \cite[Proposition~4.3]{DRPAn} from infra-nilmanifolds to infra-solvmanifolds of type $\R$.

Let $M=\Pi\bs{S}$ be the infra-solvmanifold of type $\R$ with holonomy group $\Phi$. Recall that $f$ induces a homomorphism $\phi:\Pi\to\Pi$ given by $\phi(\alpha)\circ(d,D)=(d,D)\circ\alpha$ for all $\alpha\in\Pi$. That is, $\phi=\tau_{(d,D)}$. This implies that $(d,D)$ normalizes $\Pi$ and hence $D$ normalizes $\Phi$. So $AD^n$ normalizes $\Phi$ for all $A\in\Phi$ and all $n$.

Assume that $R(f^n)=\infty$. By Corollary~\ref{R-fix}, there exists $A\in\Phi$ such that $A_*D_*^n$ has eigenvalue $1$. By \cite[Lemma~3.2]{Malfait}, $D_*^n=A_*^{-1}(A_*D_*^n)$ is virtually unipotent. Thus $D_*$ is virtually unipotent, a contradiction.
\end{proof}

\begin{Example}
Let $f$ be an Anosov diffeomorphism on an infra-nilmanifold. By \cite[Lemma~4.2]{DRPAn}, $f$ has an affine homotopy lift $(d,D)$ with hyperbolic $D_*$. From the above corollary, the Gauss congruences \eqref{DR} and \eqref{DN} hold true.
\end{Example}

\begin{Example}[{\cite[Example~11]{Fel00}, \cite{LL-JGP}}]
Let $f:M\to M$ be an expanding smooth map on a closed smooth manifold. It is known in \cite{Gromov} that $f$ is topologically conjugate to an expanding map on an infra-nilmanifold. Thus we can assume that $M$ is an infra-nilmanifold and $f$ is a map induced by an affine map $(d,D)$, where all the eigenvalues of $D_*$ are of modulus $>1$.
Since $(d,D)$ satisfies the conditions of Corollary~\ref{cor:congruence}, all $R(f^n)$ are finite and so the congruences \eqref{DR} and \eqref{DN} hold true.

On the other hand, by \cite{Shub69}, the set $\fix(f^n)$ of fixed points of the expanding map $f^n$ is nonempty and finite. Thus by Proposition~\ref{KL} and Corollary~\ref{R-fix} we have $N(f^n)=\#\fix(f^n)=R(f^n)$.
\end{Example}

\end{document}